\documentclass[11pt]{amsart}
\setlength{\textwidth}{6.0in}
\setlength{\oddsidemargin}{0.25in}
\setlength{\evensidemargin}{0.25in}
\usepackage{amsfonts}
\usepackage{amscd}
\usepackage{amssymb}
\usepackage{graphics}
\usepackage{amsmath}
\usepackage{hyperref}
\usepackage{breqn}
\usepackage{multicol}
\usepackage{pdfpages}
\usepackage{import}
\usepackage{svg}
\newtheorem{theorem}{Theorem}[section]
\newtheorem{corollary}[theorem]{Corollary}
\newtheorem{claim}{Claim}

\newtheorem{proposition}[theorem]{Proposition}
\newtheorem{definition}[theorem]{Definition}
\newtheorem{lemma}[theorem]{Lemma}

\newcommand{\iprod}[1]{\langle#1\rangle}

\newcommand{\gBinom}{\genfrac{[}{]}{0pt}{}}

\begin{document}
	\author{Paul Vollrath}
	\address{Mathematics and Computer Science\\
		Weizmann Institute of Science\\
		Rechovot 76100, Israel\\}
	\email{paul.vollrath@weizmann.ac.il}
	\date{\today}
    \title{Full Resolution to Papikian's Conjecture}
    
    \begin{abstract}
        We prove Papikian's conjecture on the spectrum of the signed up-down walk on the spherical building. Namely, we show that for the operator $\Delta_i^+=\delta_id_i$ in the spherical building $X_{n-2,q}$ of dimension $n-2$ and thickness $q+1$, the number of distinct eigenvalues is independent of $q$ and for $q\rightarrow \infty$ the positive eigenvalues converge to $n-1,\dots, n-i$.
    \end{abstract}
    \maketitle
    \section{Introduction}
    High dimensional expansion  has been a very active area of research. The foundational example of HDX was constructed as \cite{LSV05} as quotients of buildings. In these HDXs each link is a \textbf{spherical building} (which we will explicitly define in Section \ref{sec:SphericalBuilding}). Various types of expansion in these complexes has been shown via a local to global approach, i.e. extrapolating expansion in the spherical building to the complex as a whole (c.f. \cite{EK17}), which makes understanding the expansion of spherical buildings very relevant. \\
    Here we define high dimensional expansion via the spectral gap in the \textbf{up-down} walk at each level. In the up-down walk on a simplicial complex $X$ one walks on the $i$-simplices $X(i)$ via the $i+1$-simplices. Specifically starting at an $i$-simplex $s$ one picks a random $i+1$-simplex $t\in X(i+1)$ which contains $s$ and then walks to a random $i$-simplex $s'\subset t$. For this walk  it was shown in a sequence of work (c.f. \cite{DDFH18}, \cite{KO21b}, \cite{Eigen23}) that in the more general setting of posets, the up-down walk exhibits the phenomenon of \textbf{eigenstripping}. Applied to an $n$-dimensional spherical building  this means that for the up-down walk,  the entire spectrum of the up-down walk decomposes into $n+1$ distinct strip where the position of each band depends on a generalized regularity notion.\\
    The up-down walk as defined above has a strong homological flavour: Namely taking the up-step from $X(i)$ to $X(i+1)$ is comparable to the co-boundary map $d$ (up to a sign) and the down step is in turn comparable to the boundary map $\delta$. Motivated by this observation one may think of the composed operator $\Delta^+=\delta \circ d:X(i)\rightarrow X(i)$ as a homological analogue to the (unsigned) up-down walk. For this reason we will refer to the operator $\delta\circ d$ as the \textbf{signed} up-down walk operator. From a slightly different perspective M. Papikian conjectured in \cite{Pap16} that also for the signed up-down walk operator the spectrum decomposes into several strips.  More precisely he conjectured 
    \begin{theorem}
        For the signed up down walk on an $n$-dimensional building of type $A$ at level $i$ the number of distinct eigenvalues is bounded by the expression 
        \begin{equation}
            \sum_{k=0}^{i+1}\binom{i+2}{k} (-1)^k (i+2-k)^n
        \end{equation}
        which is independent of the thickness $q+1$. Further, when fixing the dimension $n$ and for the thickness going to infinity, the non-zero eigenvalues of the signed up-down walk approach $n+1,\dots, n+1-i$ at level $i$. 
    \end{theorem}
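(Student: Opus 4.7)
My plan is to exploit the $G=GL_n(\mathbb{F}_q)$-action on $X_{n-2,q}$, under which $\Delta_i^+$ is equivariant, in order to reduce both parts of the theorem to $q$-independent representation-theoretic data.

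First I would observe that the $G$-orbits on $X(i)$ are indexed by compositions $\mu=(\mu_1,\dots,\mu_{i+2})$ of $n$ into $i+2$ positive parts (the consecutive codimensions of the flag), so that
\[
\ell^2(X(i)) \;=\; \bigoplus_{\mu}\operatorname{Ind}_{P_\mu}^{G}\mathbf{1},
\]
where $P_\mu$ is the standard parabolic stabilizing a flag of type $\mu$. By the generalized Young's rule for $GL_n(\mathbb{F}_q)$ (Harish-Chandra theory, as developed by Howlett-Lehrer), each $\operatorname{Ind}_{P_\mu}^{G}\mathbf{1}$ decomposes into unipotent irreducibles $U_\lambda$ with $\lambda\vdash n$ and $\ell(\lambda)\le i+2$, with multiplicity equal to the Kostka number $K_{\lambda,\mu}$ — crucially, independent of $q$.

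Second, since $\Delta_i^+$ is $G$-equivariant, Schur's lemma shows that on the $U_\lambda$-isotype it acts as $\operatorname{id}_{U_\lambda}\otimes T_\lambda$ for some operator $T_\lambda$ on the multiplicity space $M_\lambda$. Hence the number of distinct eigenvalues of $\Delta_i^+$ is at most $\sum_\lambda\dim M_\lambda=\sum_\mu\sum_\lambda K_{\lambda,\mu}$. Using $f_\lambda\ge1$ for every $\lambda$ and Young's identity $\sum_\lambda K_{\lambda,\mu}f_\lambda=\binom{n}{\mu_1,\dots,\mu_{i+2}}$, this yields
\[
\sum_\lambda\dim M_\lambda \;\le\; \sum_{\mu}\binom{n}{\mu_1,\dots,\mu_{i+2}} \;=\; \sum_{k=0}^{i+1}\binom{i+2}{k}(-1)^k(i+2-k)^n,
\]
the right-hand side counting ordered partitions of $[n]$ into $i+2$ non-empty blocks — equivalently, the $i$-simplices in a single apartment — and is manifestly independent of $q$, establishing the first claim. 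For the asymptotic claim, I would compute the eigenvalue $c_\lambda(q)$ of $T_\lambda$ as an explicit rational function of $q$. Via Tits' deformation theorem, $\operatorname{End}_G(\ell^2(X(i)))$ is a $q$-deformation of the Weyl-group algebra $\bigoplus_{\mu,\nu}\mathbb{C}[W_\nu\backslash W/W_\mu]$ for $W=S_n$, so each $c_\lambda(q)$ admits an expansion in $q^{-1}$ whose leading term depends only on a coarse combinatorial invariant of $\lambda$ (such as the length of the longest row). The second claim then reduces to showing that these leading terms collapse onto the set $\{n+1,n,\dots,n+1-i\}$.

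The main obstacle will be this last step: explicitly controlling the $q$-dependence of $c_\lambda(q)$ on each isotype and verifying that the sign cancellations inherent in $\delta_i d_i$ collapse them onto exactly $i+1$ non-zero limits. The analogous computation for the unsigned up-down walk was carried out in \cite{Eigen23} via eigenstripping with $n-1$ bands; the signed structure introduces orientation-dependent cancellations that must be tracked, either through a direct computation on a $P_\mu$-spherical vector for each $U_\lambda$, or through an explicit expression of $\Delta_i^+$ inside the Iwahori-Hecke algebra $H_q(S_n)$ (e.g.\ as a polynomial in Jucys-Murphy elements) whose $q\to\infty$ limit can be evaluated partition by partition.
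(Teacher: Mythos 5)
Your argument for the first claim is correct, but it takes a genuinely different route from the paper. The paper never invokes the unipotent representation theory of $GL_n(\mathbb{F}_q)$: it forms the quotient weighted graph $X^i/H$ by the stabilizer $H$ of a fixed chamber (the upper-triangular group), shows by an averaging argument that this quotient has the same spectrum as a set as $X^i$, identifies its vertices with $i$-flags of subsets of $[n]$ via the height-profile map, and then uses the trivial bound that the number of distinct eigenvalues is at most the number of vertices of the quotient, counted by inclusion--exclusion. Your Schur-lemma bound $\sum_\lambda \dim M_\lambda=\sum_\mu\sum_\lambda K_{\lambda\mu}$ is in fact sharper than the stated expression before you relax it with $f_\lambda\ge 1$; the two approaches are reconciled by noting that $\dim U_\lambda^{H}=f_\lambda$, so the paper's quotient retains $f_\lambda$ copies of each eigenvalue of your $T_\lambda$. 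What the paper's construction buys, and what your decomposition does not by itself provide, is an explicit elementary matrix for the reduced operator --- which is exactly what is needed for the second claim.

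For the second claim there is a genuine gap, and it sits precisely where you place it: you never compute the eigenvalues $c_\lambda(q)$ of $T_\lambda$, nor show that they converge, nor that their limits collapse onto $\{0\}\cup\{n+1-j\}_{j=0}^{i}$. Appealing to Tits' deformation and to a hypothetical expression of $\Delta_i^+$ in the Iwahori--Hecke algebra is a plausible program, but the sign structure of $\delta_i d_i$ is the whole content of the statement, and nothing in your outline controls it. The paper resolves this by entirely explicit means: it symmetrizes the quotient adjacency matrix, computes each entry as a ratio of powers of $q$ and Gaussian binomials, shows that each entry tends to $0$ or $\pm 1$ according to an explicit domination condition on the height profiles, decomposes the limiting matrix into blocks indexed by characteristic flags, identifies each block with the signed up-down walk on a complete complex of the appropriate dimension, and computes that walk's spectrum ($\{0,m+1\}$ on the $m$-dimensional complete complex) by a second, finite quotient argument. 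Until you carry out the analogue of these computations --- for instance by evaluating $\Delta_i^+$ on a $P_\mu$-spherical vector in each $U_\lambda$ and extracting the leading term in $q$ --- the asymptotic half of the theorem remains unproven in your write-up.
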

    (in the proof split into  Proposition \ref{thm:NumberEigenvalues} and Proposition \ref{thm:Eigenstripping}) 
    A. Lew showed the conjecture in \cite{Lew23} for the level $i=0$ i.e. the Laplacian of the $1$-skeleton of the spherical building. Our methods differ significantly, we use the group of automorphisms of the spherical building to retract each of the walks to what is essentially an apartment of the building (Sec. \ref{sec:TheQuotient}). More precisely, we reinterpret the up-down walk of level $i$ as a walk on a weighted graph $X^i$ which has the $i$-simplices as vertices and the edge weights are induced by the walk matrix. Here both the adjacency structure and the walk matrix are derived from the geometry of the building and thus the group of automorphisms of the building also acts by automorphisms on the graphs $X^i$.\\ 
    The subgroup central to our analysis is, the \textbf{stabilizer} $H$ of a top-level simplex, which is the group of upper triangular matrices. For each of the graphs $X^i$ we show that the walk on $X^i$ and induced walk on the quotient $X^i/H$ have the same spectrum. This quotient map has a uniform description for all $X^i$ via the \textbf{height-profile map} (Definition \ref{def:Heights}). The height of a vector is the largest index of a non-zero coefficient in a canonical basis and the height profile map associates to a vector spaces the collection of vector heights featuring in the vector space. In this manner the vertices of the quotient $X^i/H$ can be canonically identified with $i$-flags of subsets of the interval $[n]$.\\   Since the number of such flags only depends on $n$ and not $q$, this proves that the number of distinct eigenvalues of the up down walk on level $i$ is bounded in terms of $n$ i.e. the first part of Papikian's conjecture. For the second part of the conjecture, we show that when driving the thickness to infinity the resulting walk on the $i$-flags of $[n]$ decomposes the building into several copies of complete complexes with dimension up to $n$ (c.f. Section \ref{sec:TheBlock}). Analyzing the walk on each of these simplices independently then results in the second part of the conjecture.\\
    Since the basic approach is viable for all spherical buildings (independent of their type) we intend to generalize this result to all spherical buildings in future work. 

    \subsection*{Organization} In section 
    \section{Basics}
    \subsection{Simplicial Complexes \& the Signed Up-Down Walk}
    A finite \textbf{simplicial complex} $X$ is a collection of subsets of a finite set $[n]=\{1,\dots, n\}$  which is closed under inclusion, meaning if $s\in X$ then any subset $t\subset s$ is also contained in $X$. Each element in $X$ is referred to as a \textbf{simplex}. The dimension of a simplex $s\in X$ is $|s|-1$ and one denotes the set of $i$-dimensional simplices of $X$ as $X(i)$. The dimension of the whole simplicial complex is the dimension of the largest simplex it contains. A simplicial complex is called \textbf{pure} if every simplex $s\in X$ is contained in a simplex of  dimension $dim(X)$ i.e. a top-dimensional simplex. All simplicial complexes we consider are pure.\\
    The set of functions $C^i(X):X(i)\rightarrow \mathbb{R}$ are called $i$-chains. In order to define both the \textbf{boundary} and \textbf{co-boundary} maps one needs to introduce orientation and a weight function. Defining the orientation one first enumerates the vertices of $X$. This means one fixes an embedding of $X(0)$ into $[|X(0)|]$ and maps $X(i)$ accordingly to subsets of size $i+1$ of $[|X(0)|]$. Fixing such an \textbf{enumeration} (the specific enumeration is arbitrary) one identifies $x\in X(0)$ with its label under this enumeration. For two vertices $x_1,x_2$ the expression $x_1<x_2$ indicates that the label of $x_1$ is less than the label of $x_2$. Let $x_0,\dots, x_{i+1}$ be the vertices of the simplex $s\in X$ such that $x_0<\dots<x_{i+1}$ and $f\in C^i(X)$ be an $i$-chain. Then the co-boundary of $f$ is denoted by $df\in C^{i+1}(X)$ and evaluated at $s$ $df$ is given by:
    \begin{equation}
        d_if(s)=\sum_{k=0}^{i+1} (-1)^k f([x_0,\dots, \hat{x}_k,\dots, x_{i+1}])
        \label{eq:CoBoundaryOperator}
    \end{equation}
    where $[x_0,\dots, \hat{x}_k,\dots, x_{i+1}]$ denotes the simplex $t\subset s$ which is missing the vertex $x_k$. In this manner the enumeration induces the co-boundary map $d_i: C^i\rightarrow C^{i+1}$. In order to define the boundary map one introduces a weight function:
    \begin{definition}
        Let $X$ be a pure $n$-dimensional simplex. For a simplex $s\in X(i)$ let its weight $w(s)$ be given by:
        \begin{equation}
            w(s)=|\{t\in X(n)|s\subset t\}|
        \end{equation}
        i.e. the weight of $s$ is given by the number of top-dimensional simplices $s$ is contained in.
        \label{def:Weight}
    \end{definition}
    This weight function on the simplices induces a scalar product on the space of co-chains $C^i$ by setting
    \begin{equation}
        \iprod{f,g}=\sum_{s\in X} w(s) f(s)g(s)
    \end{equation}
    for two $i$-chains $f,g\in C^i(X)$. Using this scalar product on the $C^i$ the boundary map $\delta_i:C^{i+1}\rightarrow C^i$ is the adjoint map of $d_i$ namely for all $f\in C^i$ and $g\in C^{i+1}$ holds $\iprod{d_i f,g}=\iprod{f,\delta_i g}$. Explicitly, this means that for $s\in X(i)$ and $f\in C^{i+1}$ its boundary evaluated at $s$ reads (c.f. \cite{Pap16}):
    \begin{equation}
        \delta_i f(s)=\sum_{x\in X(0)|x\cup s\in X(i+1)}\frac{w(x\cup s)}{w(s)} sign(x\cup s)f(x\cup s)
        \label{eq:BoundaryOperator}
    \end{equation}
    Where $sign(x\cup s)$ is the sign of the permutation ordering the $i+2$-tuple $x,x_0,\dots x_i$ according to their labels while assuming that $(x_0<\dots<x_i)=s$.
    \begin{definition}
        The operator $\Delta_i^+=\delta_id_i$ on chains $C^i$ is called the \textbf{signed up-down walk} on $i$-simplices. In other contexts it might be referred to as the curvature transformation (e.g. \cite{Pap16}). 
    \end{definition}
    Clearly $\Delta_i^+$ is self adjoint under the scalar product induced by the weight function and positive since $\iprod{\Delta_i^+f,f}=\iprod{d_if,d_if}\geq 0$. Further, the spectrum of $\Delta_i^+$ does not depend on the specific enumeration (c.f. \cite{Hat02} chapter 2).\\
    Before, getting into the weeds of discussing the spectrum of the signed up-down walk on the spherical building a quick explanation for the nomenclature is in order. Clearly, as defined $\Delta_i^+$ is not a random walk for two reasons: Obviously, some entries of $\Delta_i^+$ are negative, therefore we refer to it as a signed walk. The other issue is that $\sum_{x\in X(0)|x\cup s\in X(i+1)}\frac{w(x\cup s)}{w(s)}\neq 1$ in general i.e. $\Delta_i^+$ is not stochastic even if one ignores the signs. But on second thought one realizes that in a pure simplicial complex of dimension $\sum_{x\in X(0)|x\cup s\in X(i+1)}\frac{w(x\cup s)}{w(s)}=n-i$. This can be seen as follows: For any $i$-simplex $s$ and any top level simplex $s\subset t$ has $n+1$ vertices. This means that there are $n-i$ intermediate $i+1$-simplices $s\subset s'\subset t$. Thus one has:
    \begin{equation}
        \sum_{u\in X(i)}|\Delta_i^+(s,u)|=\frac{\sum_{s'\in X(i+1)|s\subset s'}w(s')}{w(s)}=(n-i)\frac{\sum_{t\in X(n)}1}{w(s)}=n-i
    \end{equation}
    For this reason we deem calling $\Delta_i^+$ a signed walk in a pure complex appropriate.

    \subsection{The Spherical Building}
    \label{sec:SphericalBuilding}
     The simplicial complex of interest is the spherical building $X_{n-2,q}$ of type $A_n$. The spherical building $X_{n-2,q}$ is the flag complex associated to the vector space $\mathbb{F}_q^n$ for a prime-power $q$ or explicitly one defines:
	\begin{definition}
		Fix $n$ and a prime power $q$ and let $\mathbb{F}_q$ be the field of size $q$.The spherical building $X_{n-2,q}$ has as vertex set all non-trivial linear subspaces of $\mathbb{F}_q^n$ and two vertices $x,y\in X_{n-2,q}(0)$ are connected by an edge if the associated subspaces $\mathbf{x},\mathbf{y}$ fulfill an inclusion relation i.e. $\mathbf{x}\subset \mathbf{y}$ or $\mathbf{y}\subset \mathbf{x}$. Making this complex clique, i.e. filling in all $k$-cliques in $X_{n-2,q}$ by a $k-1$-simplex for  $2\leq k\leq n-2$ one obtains the spherical building $X_{n-2,q}$.
		\label{def:SphericalBuilding}
    \end{definition}
    As two distinct subspaces $\mathbf{x}, \mathbf{y}\subset \mathbb{F}_{n,q}$ of the same dimension cannot fulfill an inclusion relation, there is no edge between the corresponding vertices $x,y\in X_{n,q}(0)$. Thus, assigning $dim(\mathbf{x})$ as a label to vertex $x$ provides a legal coloring of the building. From now on, for $0<k<n$ we will refer to the set $\{x\in X_{n-2,q}|dim(\mathbf{x})=k\}$ as the color-$k$ vertices. \\ 
    The group $G=GL_n(\mathbb{F}_q)$ acts naturally on the spherical building $X_{n-2,q}$ via its action on the underlying vector space $\mathbb{F}_q^n$. Namely, for $g\in G$ and vertex $x\in X_{n-2,q}$ with associated vector space $\mathbf{x}\subset \mathbb{F}_q^n$ the vertex $gx$ is associated to the subspace $g\mathbf{x}\subset \mathbb{F}_q^n$. Since the action of $G$ on $\mathbb{F}_q^n$ preserves inclusion relations of subspaces, $g\in G$ preserves adjacency relations when acting on the vertices of $X_{n-2,q}$. This means that $g\in G$ is an automorphism of the one skeleton of the building $X_{n-2,q}$ but as the building is a clique complex, i.e. its simplicial structure is defined by its one skeleton, $g\in G$ is actually an automorphism of the entire building.\\
    For the further discussion we need some transitivity results regarding the action of $G$ on the building:
    \begin{proposition}
        (c.f. \cite{Brown89}) The action of $G$ is transitive on the top-level cells of the spherical building.
    \end{proposition}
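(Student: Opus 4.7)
My plan is to argue directly in terms of complete flags. First I would unpack what a top-level simplex in $X_{n-2,q}$ is: a top-dimensional simplex has $n-1$ vertices, and since the coloring by dimension of the associated subspace is proper, these vertices must represent subspaces of $n-1$ distinct dimensions in $\{1,\dots,n-1\}$, all pairwise comparable under inclusion. Hence top-level simplices are in bijection with \emph{complete flags}
\[
0 \subsetneq \mathbf{V}_1 \subsetneq \mathbf{V}_2 \subsetneq \dots \subsetneq \mathbf{V}_{n-1} \subsetneq \mathbb{F}_q^n, \qquad \dim \mathbf{V}_i = i.
\]

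Next I would reduce transitivity to a standard adapted-basis construction. Given any complete flag as above, I would choose a basis $e_1,\dots,e_n$ of $\mathbb{F}_q^n$ adapted to the flag, meaning $\mathbf{V}_i = \mathrm{span}(e_1,\dots,e_i)$ for every $i$. Such a basis is produced inductively: pick any nonzero $e_1 \in \mathbf{V}_1$, and having chosen $e_1,\dots,e_i$ spanning $\mathbf{V}_i$, choose any $e_{i+1} \in \mathbf{V}_{i+1}\setminus \mathbf{V}_i$; this works because $\dim \mathbf{V}_{i+1} = i+1 > i = \dim \mathbf{V}_i$.

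Finally, given two complete flags $(\mathbf{V}_i)$ and $(\mathbf{W}_i)$ with adapted bases $(e_i)$ and $(f_i)$ respectively, the unique linear map $g \in \mathrm{End}(\mathbb{F}_q^n)$ with $g(e_i)=f_i$ sends a basis to a basis, so $g \in G = GL_n(\mathbb{F}_q)$, and by construction $g(\mathbf{V}_i) = g(\mathrm{span}(e_1,\dots,e_i)) = \mathrm{span}(f_1,\dots,f_i) = \mathbf{W}_i$ for each $i$. Thus $g$ carries the first top-level simplex to the second, proving transitivity.

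There is essentially no obstacle here; the only conceptual step is identifying top-level simplices with complete flags, after which everything reduces to elementary linear algebra. (The same argument in fact shows that $G$ acts transitively on the set of ordered adapted bases, which gives more than we need.) Since this is a classical fact about the type $A_{n-1}$ building, I would also simply cite \cite{Brown89} as the author does.
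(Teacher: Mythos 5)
Your argument is correct and complete: identifying top-level simplices with complete flags, constructing an adapted basis inductively, and mapping one adapted basis to another via an element of $GL_n(\mathbb{F}_q)$ is exactly the classical proof of this fact. The paper itself offers no proof and simply cites \cite{Brown89}, so there is nothing to contrast against; your write-up supplies the standard argument that the citation points to, and the only step worth stating explicitly (which you do) is that a clique of $n-1$ vertices with pairwise-comparable, distinctly-colored subspaces is precisely a complete flag.
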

    As a lemma on can derive an even stronger result and to that end one may define the type of a simplex. For simplex $s=\{x_0,\dots, x_i\} $ its type is $\{col(x_0),\dots,col(x_i)\}$.
    \begin{lemma}
        $G$ acts transitively on the simplices of type $\{c_0,\dots, c_i\}$ for all possible types.
        \label{lem:MasterTransitivity}
    \end{lemma}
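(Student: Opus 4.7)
The plan is to reduce the statement to the Proposition by extending any simplex to a top-dimensional one. A simplex of type $\{c_0, \dots, c_i\}$ with $c_0 < \dots < c_i$ is by definition a tuple of vertices whose associated subspaces form a flag $\mathbf{V}_{c_0} \subsetneq \mathbf{V}_{c_1} \subsetneq \dots \subsetneq \mathbf{V}_{c_i}$ in $\mathbb{F}_q^n$ with $\dim \mathbf{V}_{c_j} = c_j$. A top-dimensional simplex in $X_{n-2,q}$ has $n-1$ vertices carrying each of the colours $1, 2, \dots, n-1$, and therefore corresponds to a \emph{complete} flag $0 \subsetneq \mathbf{W}_1 \subsetneq \mathbf{W}_2 \subsetneq \dots \subsetneq \mathbf{W}_{n-1} \subsetneq \mathbb{F}_q^n$. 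I claim that every partial flag can be refined to a complete flag, after which the Proposition immediately supplies an element of $G$ sending one complete flag to any other.

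First I would verify the refinement step. Given a partial flag $\mathbf{V}_{c_0} \subsetneq \dots \subsetneq \mathbf{V}_{c_i}$, for each gap of colours $c_j < k < c_{j+1}$ one chooses a $(k-c_j)$-dimensional subspace of the quotient $\mathbf{V}_{c_{j+1}}/\mathbf{V}_{c_j}$ and pulls it back to $\mathbb{F}_q^n$ to obtain $\mathbf{W}_k$ with $\mathbf{V}_{c_j} \subsetneq \mathbf{W}_k \subsetneq \mathbf{V}_{c_{j+1}}$. The same idea applied to $\mathbf{V}_{c_0}$ itself fills in the colours $1,\dots,c_0-1$, and applied to $\mathbb{F}_q^n / \mathbf{V}_{c_i}$ fills in the colours $c_i+1, \dots, n-1$. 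The resulting chain $\mathbf{W}_1 \subsetneq \dots \subsetneq \mathbf{W}_{n-1}$ is a complete flag whose vertices of colours $c_0, \dots, c_i$ are exactly the original simplex.

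Now take two simplices $s, s'$ of type $\{c_0, \dots, c_i\}$, with associated partial flags $F, F'$. Refine them as above to complete flags $\widehat{F}, \widehat{F}'$, i.e.\ top-dimensional simplices $\widehat{s}, \widehat{s}'$ of $X_{n-2,q}$. By the preceding Proposition there exists $g \in G$ with $g\,\widehat{s} = \widehat{s}'$, and since the $G$-action preserves dimensions of subspaces (hence colours of vertices), $g$ must send the colour-$c_j$ vertex of $\widehat{s}$ to the colour-$c_j$ vertex of $\widehat{s}'$ for each $j$. Therefore $gs = s'$, proving transitivity on type-$\{c_0,\dots,c_i\}$ simplices.

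The only conceptual step is the refinement construction, which is routine linear algebra; the rest is a direct restriction argument from the Proposition. I do not foresee a genuine obstacle, merely the bookkeeping of extending the flag both below $c_0$ and above $c_i$ in addition to the internal gaps.
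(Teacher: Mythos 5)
Your proposal is correct and is essentially the paper's own argument: extend each simplex to a top-dimensional one (the paper just cites purity of the building where you spell out the flag-refinement explicitly), apply transitivity on top-level simplices, and use that $G$ preserves colours to conclude the sub-simplices match. The only cosmetic difference is that the paper maps everything to one fixed chamber and uses uniqueness of the type-$\{c_0,\dots,c_i\}$ face inside it, whereas you map one refined chamber directly to the other.
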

    \begin{proof}
        Fix a top-level simplex $s$, note that the type of this simplex is $\{1, \dots, n-1\}$ for type $\{c_0,\dots, c_i\}$ it contains exactly one simplex $t$ of this type. Let $t'\in X_{n,q}$ be another simplex of type $\{c_0,\dots, c_i\}$. The spherical building is pure so $t'$ is contained in a top-level simplex $s'$. Since $G$ acts transitively on top-level simplices there is $g$ such that $gs'=s$. As $G$ preserves the color of vertices it also preserves types of simplices so $gt'\subset s$ must be $t$ as $t$ is the unique simplex of type $\{c_0,\dots, c_i\}$ in $s$.
    \end{proof}
    By acting on the building itself, the group acts on the space of co-chains as well, namely for $g\in G$, $f\in C^i(X_{n-2,q})$ and $i$-simplex $s\in C^i(X_{n-2,q})$ one has:
    \begin{equation}
        gf(s)=f(g^{-1}s)
    \end{equation}
    Returning to discussing the signed up-down walk on the spherical building, we fix an enumeration of the vertices according to color meaning for two vertices $x,y\in X_{n-2,q}(0)$ the condition $col(x)<col(y)$ implies $x<y$ holds. 
    \begin{proposition}
        Given the fixed enumeration the co-boundary map is $G$-equivariant, meaning that for $g\in G$ and $f\in C^i(X_{n-2,q})$ 
        \begin{equation}
            d_igf=gd_if
        \end{equation}
    \end{proposition}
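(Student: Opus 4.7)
The plan is to unpack both sides of $d_i(gf)=g(d_if)$ directly from their definitions and show that, under the color-sorted enumeration, the $G$-action induces an order-preserving bijection on the vertices of every simplex, so no signs get permuted.

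First I would fix an $i$-simplex $s\in X_{n-2,q}(i)$, list its vertices as $x_0<x_1<\dots<x_{i+1}$ under the chosen enumeration, and expand the left-hand side:
\begin{equation*}
(d_i(gf))(s)=\sum_{k=0}^{i+1}(-1)^k (gf)([x_0,\dots,\hat x_k,\dots,x_{i+1}])=\sum_{k=0}^{i+1}(-1)^k f(g^{-1}[x_0,\dots,\hat x_k,\dots,x_{i+1}]).
\end{equation*}
On the right-hand side, $(gd_if)(s)=(d_if)(g^{-1}s)$, and if $g^{-1}s$ has vertices $y_0<\dots<y_{i+1}$ (again under the enumeration), then
\begin{equation*}
(d_if)(g^{-1}s)=\sum_{k=0}^{i+1}(-1)^k f([y_0,\dots,\hat y_k,\dots,y_{i+1}]).
\end{equation*}
The goal reduces to showing that $y_k=g^{-1}x_k$ for every $k$, since then the $k$-th summands match term by term.

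The crux is the claim that the list $g^{-1}x_0,g^{-1}x_1,\dots,g^{-1}x_{i+1}$ is already sorted in the chosen enumeration. For this I would use two ingredients: (i) the vertices of a simplex in the spherical building correspond to a flag of proper subspaces of $\mathbb{F}_q^n$ of strictly increasing dimensions, so all $i+2$ vertices of $s$ carry \emph{distinct} colors; and (ii) $g\in G=GL_n(\mathbb{F}_q)$ acts on $\mathbb{F}_q^n$ by linear automorphism, hence preserves dimensions of subspaces, so $\mathrm{col}(g^{-1}x_k)=\mathrm{col}(x_k)$ for all $k$. Because the enumeration was fixed so that $\mathrm{col}(x)<\mathrm{col}(y)$ implies $x<y$, and because the colors of the $x_k$ strictly increase by (i), the same strict inequalities hold for the $g^{-1}x_k$ by (ii). Hence $y_k=g^{-1}x_k$.

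The only subtlety to be careful about is that the enumeration is a total order on all of $X_{n-2,q}(0)$, not just by color; within a color class the enumeration could be arbitrary and is certainly not $G$-invariant. The above argument sidesteps this completely because inside any fixed simplex no two vertices share a color, so the order on the vertices of a simplex is determined purely by color. I expect this to be the only real point that needs spelling out; once it is made, the proof of equivariance is a one-line matching of the two sums.
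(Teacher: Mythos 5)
Your proof is correct and follows essentially the same route as the paper: expand both sides of the identity via the definition of $d_i$, and observe that because the vertices of any simplex carry pairwise distinct colors and $g^{-1}$ preserves colors, the color-sorted enumeration makes $g^{-1}x_0,\dots,g^{-1}x_{i+1}$ already ordered, so the signs match term by term. Your explicit remark that the enumeration within a color class need not be $G$-invariant (and that this is harmless since no simplex contains two vertices of the same color) is exactly the point the paper relies on in its third equality.
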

    \begin{proof}
        The proof is a simple calculation. For $s\in X_{n-2,q}(i+1)$ one has
        \begin{dmath}
             d_i gf(s)=d_if(g^{-1}s)=\sum_{k=0}^{i+1} (-1)^k f([g^{-1}x_0,\dots, g^{-1}\hat{x}_k,\dots, g^{-1}x_{i+1}])=\sum_{k=0}^{i+1} (-1)^k gf([x_0,\dots,\hat{x}_k,\dots, x_{i+1}])=gd_if(s)
        \end{dmath}
        where the third equality relied on the fact that each of the vertices of $s$ have a different color, the action of $g$ does not change the color of the vertices and by assumption on the enumeration this implies that if $x_0,\dots, x_{i+1}$ is ordered according to the enumeration then so is $g^{-1}x_0,\dots,g^{-1}x_{i+1}$.
    \end{proof}
    In order to show that also the boundary map is $G$-equivariant one notes that with respect to the standard basis of $C^i$ (i.e. indicators of individual simplices) any element of $G$ is represented by a permutation. Further, the weight of a simplex depends only on how many top-level simplices it is contained in. Since $G$ is a group of automorphisms i.e. $s\in X_{n-2,q}(i)$ and $gs$ must be contained in the same number of top-level simplices, the weight function is $G$ invariant. Together this means that the adjoint of $g\in G$ with respect to the scalar product induced by the weight function is $g^{-1}$.
    \begin{proposition}
        Given the fixed enumeration, also the boundary map is $G$-equivariant i.e.
        \begin{equation}
            \delta_ig=g\delta_i
        \end{equation}.
    \end{proposition}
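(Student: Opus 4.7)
The plan is to exploit the adjoint characterization of $\delta_i$ together with the two ingredients already assembled in the paragraph preceding the statement: (a) the $G$-equivariance of $d_i$ just established, and (b) the observation that with respect to the weight-induced scalar product on $C^i$ the adjoint of the operator $g \in G$ is $g^{-1}$. The latter holds because in the indicator basis $g$ acts by a permutation of simplices, while the weight $w(s)$ depends only on the number of top-dimensional simplices above $s$ and is therefore preserved by any automorphism of $X_{n-2,q}$; hence the matrix of $g$ is orthogonal relative to the diagonal matrix of weights, i.e. $\iprod{gx,y} = \iprod{x,g^{-1}y}$ for all $x \in C^i$, $y \in C^i$.

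I would then simply chase the definitions. Fix $g \in G$, $f \in C^i$, $h \in C^{i+1}$. Starting from $\iprod{f,\delta_i g h}$, applying the defining relation of $\delta_i$ yields $\iprod{d_i f, g h}$; moving $g$ across the scalar product via (b) gives $\iprod{g^{-1} d_i f, h}$; commuting $g^{-1}$ past $d_i$ via (a) rewrites this as $\iprod{d_i g^{-1} f, h}$; applying the adjoint relation for $d_i$ once more produces $\iprod{g^{-1} f, \delta_i h}$; and a final use of (b) in the opposite direction gives $\iprod{f, g \delta_i h}$. Since $f \in C^i$ is arbitrary, this yields $\delta_i g h = g \delta_i h$, and since $h \in C^{i+1}$ is also arbitrary the operator identity $\delta_i g = g \delta_i$ follows.

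There is essentially no obstacle: the entire argument is a one-line adjoint chase, and the only non-trivial bookkeeping, namely verifying that $g$ and $g^{-1}$ are mutual adjoints under $\iprod{\cdot,\cdot}$, is already done in the paragraph immediately preceding the statement. The sole point worth flagging for care is that the scalar products appearing on either side of $\delta_i$ live on $C^{i+1}$ and $C^i$ respectively, so in invoking (b) one should note that the $G$-invariance of the weight function holds at every level, which is immediate from Definition \ref{def:Weight} and the fact that $G$ permutes top-dimensional simplices.
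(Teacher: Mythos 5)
Your proof is correct and follows exactly the same route as the paper: the identical chain of equalities $\iprod{f,\delta_i gh}=\iprod{d_if,gh}=\iprod{g^{-1}d_if,h}=\iprod{d_ig^{-1}f,h}=\iprod{g^{-1}f,\delta_ih}=\iprod{f,g\delta_ih}$, relying on the equivariance of $d_i$ and on $g^{-1}$ being the adjoint of $g$ via the $G$-invariance of the weight function. No differences worth noting.
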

    \begin{proof}
        Since the boundary map is the adjoint map of the co-boundary map, the equi-variance of the co-boundary map implies the equi-variance of the boundary map as follows: For every $f\in C^i(X_{n-2,q})$, $h\in C^{i+1}(X_{n-2,q})$ and $g\in G$ holds:
        \begin{equation}
            \iprod{f,\delta_i gh}=\iprod{d_if,gh}=\iprod{g^{-1}d_if,h}=\iprod{d_ig^{-1}f,h}=\iprod{g^{-1}f,\delta_ih}=\iprod{f,g\delta_ih}
        \end{equation}
        Since this equality holds for any $f$, $h$ and $g$, and the scalar product is non-degenerate one has $\delta_igh=g\delta_ih$ i.e. co-variance of the boundary map.
    \end{proof}
    Combining the $G$-equivariance of the boundary and the coboundary map, one has
    \begin{equation}
        g^{-1}\Delta_i^+g=\Delta_i^+    
    \end{equation}
    i.e. $\Delta_i^+$ is $G$-invariant, positive and self adjoint.

    \subsection{Invariance of $\Delta_i^+$ under Automorphisms}
    \label{sec:WalkInvariance}
    Showing that the up-down walk is invariant under $G$ in the spherical building relied on finding a well-behaved enumeration of its vertices. We will show now that in any simplicial complex $X$ the signed up-down walk is invariant under its group of automorphisms $Aut(X)$. This will be useful for calculating the spectrum of the signed-up down walk on the simplex in Section \ref{sec:SimplexSpecs}.\\
    Towards this goal one needs to introduce some technical definitions: For a permutation $\sigma \in S_n$ and a subset $v=\{v_0,\dots,v_i\}\subset [n]$ assuming that $v_0<\dots<v_i$ let $sign(\sigma\restriction_v)$ be the sign of the permutation ordering the $i+1$-tuple $\sigma(v_0),\dots, \sigma(v_i)$. Denote $\hat{v}_k$ be $v$ without the $v_k$ i.e. $\hat{v}_k=\{v_0,\dots,v_{k-1},v_{k+1},\dots, v_i\}$. \\
    Now let $g\in Aut(X)$ be an automorphism of the simplicial complex $X$. We may view it as a permutation of the vertices $x\in X(0)$ and have it act as follows on the $i$-chains: For a simplex $s=\{x_0, \dots, x_i\}$ assume that  $x_0<\dots< x_i$ .
    Then one defines:
    \begin{equation}
        gI_s=sign(g\restriction_s)I_{gs}
        \label{eq:BaseAction}
    \end{equation}
    First one observes that this is an action:
    \begin{proposition}
        $Aut(X)$ acts on $C^i(X)$ via extending the Map \ref{eq:BaseAction} linearly from the indicators to all of $C^i(X)$
    \end{proposition}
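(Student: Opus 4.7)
The plan is to verify the two group-action axioms on the standard basis $\{I_s\}_{s\in X(i)}$; since the proposed map is extended by linearity, this suffices. The identity axiom is immediate: for the identity automorphism $e$, the restriction $e\restriction_s$ is the identity permutation of $s$, so $sign(e\restriction_s)=1$ and $eI_s=I_s$.

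The content lies in the compatibility axiom $(gh)I_s=g(hI_s)$. Unfolding both sides using \eqref{eq:BaseAction} gives
\begin{equation*}
    (gh)I_s=sign((gh)\restriction_s)\,I_{ghs},
\end{equation*}
and, writing $s=\{x_0,\dots,x_i\}$ with $x_0<\dots<x_i$ and letting $y_0<\dots<y_i$ be the ordered reindexing of $h(x_0),\dots,h(x_i)$ (so that $hs=\{y_0,\dots,y_i\}$ in the correct order),
\begin{equation*}
    g(hI_s)=sign(h\restriction_s)\cdot sign(g\restriction_{hs})\,I_{ghs}.
\end{equation*}
Since $(gh)s=g(hs)$ as sets, the proposition reduces to the sign chain-rule
\begin{equation*}
    sign((gh)\restriction_s)=sign(h\restriction_s)\cdot sign(g\restriction_{hs}).
\end{equation*}

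I would prove this chain-rule by tracking the two reorderings explicitly. Let $\pi\in S_{i+1}$ be the permutation satisfying $h(x_{\pi(0)})<\dots<h(x_{\pi(i)})$, so $sign(\pi)=sign(h\restriction_s)$, and let $\sigma\in S_{i+1}$ be the permutation satisfying $g(y_{\sigma(0)})<\dots<g(y_{\sigma(i)})$, so $sign(\sigma)=sign(g\restriction_{hs})$. Substituting $y_j=h(x_{\pi(j)})$ into the second inequality yields $g(h(x_{\pi(\sigma(0))}))<\dots<g(h(x_{\pi(\sigma(i))}))$, which identifies $\pi\circ\sigma$ as the permutation reordering $(gh)(x_0),\dots,(gh)(x_i)$; its sign is $sign(\pi)sign(\sigma)$, as required. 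The argument is essentially pure bookkeeping—there is no genuine obstacle here, only the need to keep the two reorderings straight, which the chosen parametrization handles cleanly.
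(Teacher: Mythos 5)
Your proof is correct and follows the same route as the paper: reduce to the cocycle identity for signs on basis indicators, then verify it by composing the two reordering permutations and observing that their composite is the permutation reordering $(gh)(x_0),\dots,(gh)(x_i)$. Your version is slightly more careful about the parametrization (explicitly substituting $y_j=h(x_{\pi(j)})$), but the content is identical to the paper's $\sigma_{gh}=\sigma_g\sigma_h$ argument.
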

    \begin{proof}
        Since the actions is constructed by linear extension, one only needs to show that it is an action on the indicators of simplices. Namely, that for $g,h\in Aut(X)$ one has $(gh)I_s=sign(gh\restriction_s)I_{ghs}=sign(h\restriction_s)gI_{hs}$. To see this define three permutations: $\sigma_h, \sigma_g, \sigma_{gh}\in S_i$, where $\sigma_h$ is the permutation ordering $hx_0,\dots,hx_i$, $\sigma_{gh}$ the permutation ordering $ghx_0,\dots,ghx_i$ and $\sigma_g$ be the permutation ordering $g(\sigma_h(hx_0,\dots,hx_i))$. By definition one has:
        \begin{equation}
            hI_s=sign(\sigma_h)I_{hs}
        \end{equation}
        \begin{equation}
            ghI_s=sign(\sigma_{gh})I_{ghs}
        \end{equation}
        \begin{equation}
            gI_{hs}=sign(\sigma_g)I_{ghs}
        \end{equation}
        Tracing the definitions one notes that $\sigma_{gh}=\sigma_g\sigma_h$ and thus $sign(\sigma_{gh})=sign(\sigma_h)sign(\sigma_g)$. This proves:
        \begin{equation}
            g(hI_s)=sign(\sigma_h)gI_{hs}=sign(\sigma_h)sign(\sigma_g)I_{ghs}=sign(\sigma_{gh}) I_{ghs}=ghI_s
        \end{equation}
    \end{proof}
    With this technical lemma in hand one can prove invariance of $\Delta_i^+$ under automorphisms:
    \begin{proposition}
        For $g\in Aut(X)$ holds 
        \begin{equation}
            g^{-1}\Delta_i^+g=\Delta_{i}^+
        \end{equation}
        i.e. the signed up-down walk is $Aut(X)$-invariant. 
    \end{proposition}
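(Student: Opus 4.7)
The plan is to establish $Aut(X)$-equivariance of each of $d_i$ and $\delta_i$ separately under the action from Equation \ref{eq:BaseAction} and then conclude $g^{-1}\Delta_i^+g=g^{-1}\delta_ig\cdot g^{-1}d_ig=\delta_id_i=\Delta_i^+$ by composition. Before unwinding the definitions, I would reinterpret the action geometrically: for $s=\{x_0,\dots,x_i\}$ with $x_0<\dots<x_i$, identifying $I_s$ with the oriented simplex $[x_0,\dots,x_i]$, the rule $gI_s=sign(g\restriction_s)I_{gs}$ is precisely the statement that $g$ carries the ordered tuple $(x_0,\dots,x_i)$ to $(gx_0,\dots,gx_i)$, with the sign recording how many transpositions are needed to re-sort the image into the canonical basis. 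Under this interpretation Equation \ref{eq:CoBoundaryOperator} becomes the familiar $d_i[x_0,\dots,x_{i+1}]=\sum_k(-1)^k[x_0,\dots,\hat{x}_k,\dots,x_{i+1}]$, which is manifestly covariant under any relabeling of the vertices.

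For $d_i$, I would compute both $d_i(gI_t)$ and $g(d_iI_t)$ for a basis element $I_t\in C^i$ and compare the coefficient of $I_{s'}$ for each $s'\supset gt$. This reduces the equivariance claim to the combinatorial identity
\begin{equation}
sign(g\restriction_t)\cdot(-1)^{k(s',gt)}=sign(g\restriction_{g^{-1}s'})\cdot(-1)^{k(g^{-1}s',t)},
\end{equation}
where $k(\sigma,\tau)$ denotes the position, in the sorted ordering of $\sigma$, of the unique vertex of $\sigma\setminus\tau$. This is the standard fact that the sign of the restriction of a permutation to an $(i+2)$-element set is the product of the sign of its restriction to a chosen $(i+1)$-subset and the sign of the transpositions needed to move the omitted element into place. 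I would verify it by factoring $sign(g\restriction_s)$ (where $s=g^{-1}s'$) as the sign of the permutation that first slides $gv$, for $v=s\setminus t$, into its sorted position (contributing $(-1)^{k(s',gt)+k(s,t)}$), and then sorts the remaining images (contributing exactly $sign(g\restriction_t)$).

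Having established $d_ig=gd_i$, the equivariance $\delta_ig=g\delta_i$ follows by the same adjoint argument used for the spherical building: since $g$ acts as a signed permutation on the standard basis and since the weight $w$ is $Aut(X)$-invariant (as $g$ permutes top-dimensional simplices bijectively), $g^{-1}$ is the adjoint of $g$ with respect to the weighted inner product. Feeding this into $\iprod{f,\delta_igh}=\iprod{d_if,gh}$ and pushing $g^{-1}$ through $d_i$ using the equivariance already obtained yields $\delta_ig=g\delta_i$ exactly as in the earlier proposition. The main obstacle is thus the sign identity for $d_i$; once that is in hand the rest of the argument is formal and mirrors the spherical building case verbatim.
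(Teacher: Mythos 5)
Your proposal is correct and rests on exactly the same combinatorial content as the paper's proof: your sign identity $sign(g\restriction_t)(-1)^{k(s',gt)}=sign(g\restriction_{g^{-1}s'})(-1)^{k(g^{-1}s',t)}$ is precisely the paper's Claim $sign(\sigma_{s_k})(-1)^{\sigma(k)}sign(\sigma_s)=(-1)^k$, and your ``slide the omitted vertex into place, then sort the rest'' factorization is the paper's cycle identity $([\sigma(k)])^{-1}([i+1])\sigma_{s_k}([i+1])^{-1}([k])=\sigma_s$. The only difference is packaging: you prove equivariance of $d_i$ and deduce that of $\delta_i$ by adjointness (using $Aut(X)$-invariance of the weights to get $g^*=g^{-1}$), whereas the paper conjugates the matrix entries of the composite $\Delta_i^+$ directly and applies the sign claim once for each of the two indices $k$ and $l$.
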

    \begin{proof}
        Let $\{x_0,\dots,x_{i+1}\}=s\in X^{i+1}$ and $\{x_0,\dots, \hat{x}_k,\dots, x_{i+1}\}=s_k\in X^i$ for $0\leq k\leq i+1$. Then $\Delta_i^+(s_k,s_l)=(-1)^{k+l}$. For $g\in Aut(X)$ let $\sigma_s$ be the permutation induced by $g$ on the order of $\{x_0,\dots,x_{i+1}\}$ i.e. $gx_{\sigma(0)}<\dots< gx_{\sigma(i+1)}$ is ordered and $\sigma_{s_k}$ be the permutation induced by $g$ on the order of $\{x_0,\dots, \hat{x}_k,\dots, x_{i+1}\}$. Now one has
        \begin{dmath}g^{-1}\Delta_i^+g(v_k,v_l)=sign(\sigma_{s_k})sign(\sigma_{s_l})\Delta_i^+(gv_k,gv_l)=sign(\sigma_{s_k})(-1)^{\sigma(k)}sign(\sigma_s)sign(\sigma_{s_l})(-1)^{\sigma(l)}sign(\sigma_s)
        \end{dmath}
        \begin{claim}
            For $0\leq k\leq i+1$ one has 
            \begin{equation}
                sign(\sigma_{s_k})(-1)^{\sigma(k)}sign(\sigma_s)=(-1)^k 
            \end{equation}
        \end{claim}
        \begin{proof}
            Let $([j])$ be the cyclic permutation represented by the cycle $(0,\dots, j)$. The claim follows from the following identity:
            \begin{equation}
                ([\sigma(k)])^{-1}([i+1])\sigma_{s_k}([i+1])^{-1}([k])=\sigma_v
            \end{equation}
            The identity holds because $([k])$ swaps $gx_k$ to the front while the tail becomes $gs_k$. Then $([i+1])\sigma_{s_k}([i+1])^{-1}$ orders the tail and $([\sigma(k)])^{-1}$ reinserts $gx_k$ at the position $\sigma(k)$ i.e. where it is supposed to sit when ordering $gs$. Now the sign-function is a homomorphism to $\{\pm1\}$ and thus the identity implies the identity of the signs above. 
        \end{proof}
    \end{proof}

    \subsection{Weighted Graphs and Their Quotients}
    Instead of thinking of the signed up-down operator as defined via boundary and co-boundary map, one may take a slightly different perspective via \textbf{weighted graphs}:
    \begin{definition}
		A \textbf{weighted} graph $X$ is a triple $X=(V, E, \mu)$, where $V$ denotes the finite vertex set, $E\subset V\times V$ set of directed edges and $\mu$ is a weight function i.e. $\mu:E\rightarrow \mathbb{R}$. The \textbf{adjacency operator} of a weighted graph is defined as:
        \begin{equation}
            A^X(x,y)=\mu(x,y)
        \end{equation}
        The spectrum of the adjacency operator is called the \textbf{spectrum} of the weighted graph.
	\end{definition} 
    Let $X^i$ be the weighted graph which has $X_{n-2,q}(i)$ as vertex set and two vertices $x,y$ are connected by an edge if $|x\cap y|\in \{i,i+1\}$ i.e. the two simplices $x$ and $y$ have $i$-vertices in common or $x=y$. As a weight function on the edges one may assign to the edge $(x,y)\in X_i$ the weight $\mu(x,y)=\Delta_i^+(x,y)$. One notes that $\Delta_i^+(x,y)=0$ if $|x\cap y|\notin\{i,i+1\}$ and thus almost tautologically $\Delta_i^+$ becomes the adjacency operator of the weighted graph $X^i$.\\
    As shown before the operator $\Delta_i^+$ is $G$-invariant which implies when tracing the definitions that the $G$ acts on the graph $X^i$. $g\in G$ maps $x\in X^i$ to $gx$ while preserving adjacency \textbf{and} the weight function i.e. for all $x,y\in X^i$ the weights $\mu(x,y)=\Delta_i^+(x,y)=\Delta_i^+(gx,gy)=\mu(gx,gy)$. Formulated differently $G$ is a group of automorphisms of the weighted graph $X^i$. Now a group of automorphisms of a weighted graph can be used to define a \textbf{quotient} of the weighted graph:
    \begin{definition}
		Let $X=(V,E,\mu)$ be a weighted graph and $Aut(X)$ its group of automorphims. For $G\leq Aut(X)$ we define the quotient $X/G$ as follows: Let $V'\equiv V/G$ i.e. the vertex set $V'$ of the quotient is the space of orbits of $G$ acting on the vertex set $V$ of $X$. There is an edge between two vertices $y_1,y_2\in V'$ iff there are vertices $x_1,x_2\in V$ with $x_i\in y_i$ (keep in mind $y_i$ is a $G$-orbit of vertices) such that $(x_1, x_2)\in E$. For the weight function fix $x\in y_2$ arbitrarily, then $\mu'((y_1, y_2))$ for $(y_1,y_2)\in E'$ is given by:
		\begin{equation}
			\mu'((y_1,y_2))=\sum_{z\sim x  \& z\in y_1}\mu((x,z))
		\end{equation} 
		\label{def:QuotientWeights}
	\end{definition}
    Firstly, one notes that the choice of $x$ in the definition is immaterial. Namely if,  $x=gx'$ and $x\sim z\in y_2$ then $x'\sim g^{-1}z\in y_2$. The invariance of the weight  function $\mu$ under the action of $Aut(X)$ implies that the sum used to define $\mu'$ is independent of the choice of $x\in y_1$.\\
    In general we will denote the quotient map by $\pi:V\rightarrow V/G$. The quotient map induces a map $\pi^*$ from the functions on the quotient $V/G$ to the functions on the weighted graph $X$ by setting for $f\in \mathbb{R}^{V/G}$:
    \begin{equation}
        \pi^*f(x)=f(\pi(x))
        \label{eq:DualMap}
    \end{equation}
	Quotients of weighted graphs are very useful in our context as they allow for determining the spectrum of $X$ as the next few propositions will show. 
	\begin{proposition}
		Let $X=(V, E, \mu)$ be a weighted graph as above and $G\leq Aut(X)$ a group of automorphisms of $X$. Let $\tilde{X}=X/G$ defined as above. Then the spectrum of $\tilde{X}$ is contained in the spectrum of $X$ i.e. $spec(A^{\tilde{X}})\subset spec(A^X)$. 
		\label{prop:SpectralContainment}
	\end{proposition}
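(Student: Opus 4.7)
The plan is to use the pullback map $\pi^{*} : \mathbb{R}^{V/G} \to \mathbb{R}^V$ from Equation \ref{eq:DualMap} and show that it intertwines the two adjacency operators, i.e.\ $A^X \pi^{*} = \pi^{*} A^{\tilde{X}}$. Granted this, any $f \in \mathbb{R}^{V/G}$ with $A^{\tilde{X}} f = \lambda f$ pulls back to $\pi^{*} f \in \mathbb{R}^V$ which is nonzero (the $G$-orbits partition $V$, so $\pi^{*}$ is injective) and satisfies $A^X \pi^{*} f = \pi^{*} A^{\tilde{X}} f = \lambda \pi^{*} f$. Hence every eigenvalue of $A^{\tilde{X}}$ appears as an eigenvalue of $A^X$, yielding $spec(A^{\tilde{X}}) \subset spec(A^X)$.

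To prove the intertwining identity I would evaluate both sides at an arbitrary vertex $x \in V$. For the left-hand side,
\begin{equation*}
(A^X \pi^{*} f)(x) \;=\; \sum_{z \,:\, z \sim x} \mu(x,z)\, f(\pi(z)),
\end{equation*}
I would then regroup the summation by which orbit $y \in V/G$ contains $z$. By Definition \ref{def:QuotientWeights}, for each orbit $y$ the inner sum $\sum_{z \in y,\, z \sim x} \mu(x,z)$ is precisely the quotient weight of the edge between $\pi(x)$ and $y$. This rewrites the expression as $\sum_{y} \mu'\bigl(\pi(x), y\bigr) f(y) = (A^{\tilde{X}} f)(\pi(x)) = (\pi^{*} A^{\tilde{X}} f)(x)$, establishing the identity.

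The main technical point that needs care is this matching of the orbit-regrouped sum with the quotient weight. One has to invoke the $G$-invariance of $\mu$ (which follows from $G \leq \mathrm{Aut}(X)$) to verify that the choice of representative $x \in \pi(x)$ in Definition \ref{def:QuotientWeights} is indeed immaterial, so that the sum appearing after the regrouping genuinely coincides with $\mu'$ regardless of which representative was used to define $\mu'$. Once this bookkeeping — together with the indexing convention for $\mu'((y_1,y_2))$ relative to the direction of $\mu(x,z)$ — is in place, the intertwining relation is immediate and the proposition follows from the three-line argument above.
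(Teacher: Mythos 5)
Your proposal is correct and follows essentially the same route as the paper: pull back an eigenfunction of $A^{\tilde{X}}$ via $\pi^*$, regroup the neighbor sum over $G$-orbits, and identify each inner sum with the quotient weight from Definition \ref{def:QuotientWeights}, using $G$-invariance of $\mu$ for representative-independence. Your explicit framing as the intertwining identity $A^X\pi^* = \pi^* A^{\tilde{X}}$ together with injectivity of $\pi^*$ is just a cleaner packaging of the paper's chain of equalities.
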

	\begin{proof}
		Let $f\in \mathbb{R}^{V/G}$ be an eigenfunction of $A^{\tilde{X}}$ with corresponding eigenvalue $\lambda$. Now one can can pull back $f$ via the quotient map $\pi$ to the function $\pi^*f$ on $X$. For $\pi^*f$ the following equalities hold:
		\begin{dmath}
			A^X \pi^*f(x)=\sum_{y\sim x}\mu((y,x)) \pi^*f(y)=\sum_{\pi(y)\sim\pi(x)}\sum_{z\in \pi(y), z\sim x}\mu((z,x))\pi^*f(z)=\sum_{\pi(y)\sim\pi(x)}\left(\sum_{z\in \pi(y), z\sim x}\mu((z,x))\right)f(\pi(y))=\sum_{\pi(y)\sim\pi(x)} A^{\tilde{X}}f(\pi(y))=A^{\tilde{X}}f(\pi(x))=\lambda f(\pi(x))=\lambda \pi^*f(x)
			\label{eq:EigenArithmetic}
		\end{dmath}
        Where we abused notation by denoting $z$ and $y$ being in the same $G$-orbit by $z\in \pi(y)$. Here the non-trivial step is the second equality splitting the summation over neighbors of $x$ into summing first over orbits and then summing over neighbors of $x$ contained in the orbits. Clearly, the above equality implies the proposition.
	\end{proof}
    The proof of the proposition shows that $\pi^*$ sends eigenfunction to eigenfunction of the relevant adjacency operators while preserving the associated eigenvalue. 
	Actually the proof shows the stronger statement that the spectrum of $X/G$ is exactly the $G$-invariant part of the spectrum of $X$. If there is an eigenfunction $f$ of $A^X$ with eigenvalue $\lambda$ which is also $G$-invariant then $\lambda$ is contained in the spectrum of $X/G$:
	\begin{corollary}
		Let $f$ be an eigenfunction of $A^X$ with corresponding eigenvalue $\lambda$, $G\leq Aut(X)$ and let $\tilde{X}=X/G$ . If $\sum_{g\in G} f^g\neq 0$ then $\lambda\in spec(A^{\tilde{X}})$.
		\label{cor:PreciseSpectrum}
	\end{corollary}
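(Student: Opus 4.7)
The plan is to symmetrize $f$ across $G$ to produce a non-zero $G$-invariant eigenfunction, which will then descend to the quotient. Concretely, I would set
$$F=\sum_{g\in G}f^g,$$
where $f^g$ denotes the action of $g$ on $f$. By hypothesis $F\neq 0$, and $F$ is $G$-invariant by construction, since the sum is merely permuted by left-multiplication by any $g\in G$.

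The first substantive step is to check that $F$ is itself an eigenfunction of $A^X$ with eigenvalue $\lambda$. Because elements of $G\leq\mathrm{Aut}(X)$ preserve both adjacency and the weight function $\mu$, the induced action on $\mathbb{R}^V$ commutes with $A^X$. Hence each summand satisfies $A^X f^g=\lambda f^g$, and by linearity $A^X F=\lambda F$. Since $F$ is $G$-invariant it descends to a unique function $\bar F\in\mathbb{R}^{V/G}$ characterized by $\pi^*\bar F=F$; moreover $\bar F\neq 0$ because $F\neq 0$, and $\pi^*$ is injective because $\pi$ is surjective.

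The last step is to extract the eigenvalue equation on the quotient. Reading the chain \eqref{eq:EigenArithmetic} from the proof of Proposition \ref{prop:SpectralContainment} in the opposite direction, applied to $\bar F$, gives
$$\pi^*(A^{\tilde X}\bar F)=A^X\pi^*\bar F=A^X F=\lambda F=\lambda\pi^*\bar F=\pi^*(\lambda\bar F).$$
Injectivity of $\pi^*$ then yields $A^{\tilde X}\bar F=\lambda\bar F$, and combined with $\bar F\neq 0$ this shows $\lambda\in\mathrm{spec}(A^{\tilde X})$, as required.

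There is no genuine obstacle: the corollary is really just the converse direction of the proposition, unlocked by the Reynolds-style averaging $F=\sum_g f^g$. The only place one has to be a little careful is in observing that automorphisms of the weighted graph commute with $A^X$, but this is immediate from $\mu(gx,gy)=\mu(x,y)$ for $g\in\mathrm{Aut}(X)$ and accounts for why symmetrization preserves the eigenvalue.
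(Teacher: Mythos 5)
Your proposal is correct and follows essentially the same route as the paper: both average (or sum) $f$ over $G$ to get a non-zero $G$-invariant eigenfunction, observe that $G$-invariance of $A^X$ preserves the eigenvalue under this symmetrization, and then push the resulting function down to the quotient using the computation in Equation \eqref{eq:EigenArithmetic}. The only cosmetic difference is that the paper normalizes by $\tfrac{1}{|G|}$ and verifies the quotient eigenvalue equation pointwise rather than via injectivity of $\pi^*$, which amounts to the same thing.
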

	\begin{proof}
		For an eigenfunction $f$ of $\Delta_i^+$  with eigenvalue $\lambda$ and $g\in G$ one has by invariance of $\Delta_i^+$:
        \begin{equation}
            \Delta_i^+gf=g\Delta_i^+f=  g\lambda f=\lambda gf  
        \end{equation}
        meaning that for all $g\in G$ $gf$ is an eigenfunction to eigenvalue $\lambda$ and so is their sum over $G$. Now set $f^G(u)=\frac{1}{|G|}\sum_{g\in G}f^g(u)$ i.e. the average of $f$ over $G$ and note that $f^G$ is constant on $G$-orbits. This allows for defining a function $f'$ on the quotient by setting $f'(\pi(x))=f^G(x)$. Now from Equation \ref{eq:EigenArithmetic} it follows:
		\begin{equation}
			A^{\tilde{X}}f'(\pi(x))=A^Xf^G(x)=\lambda f^G(x)=\lambda f'(\pi(x))
		\end{equation}
		This shows that if $f^G$ and thus $f'$ is not zero then $\lambda\in Spec(A^{\tilde{X}})$.
	\end{proof}
    Before turning to actually proving Papikian's conjecture, we remark that passing to the quotient $X/G$ is equivalent to restricting the adjacency $A^X$ to functions on $X$ invariant under $G$. The upside of the quotient approach is that it makes very explicit what the restricted adjacency looks like. 

    \subsubsection{The Edge Weights of $X^i$}
    \label{sec:EdgeWeights}
    Before discussing the quotients of $X^i$ it is worth to determine the weight of the edges in $X^i$. To this end let $t,u$ be two adjacent vertices of $X^i$ i.e. $U,W$ are two $i$-simplices in $X_{n,q}$ such that $s=t\cup u$ is an $i+1$ simplex of $X_{n,q}$. As each simplex of $X_{n,q}$ corresponds to a partial flag of $\mathbb{F}_q^n$ we may identify $s$ with the flag $\mathbf{x}_0\subset\dots \mathbf{x}_i$ of appropriate subspaces of $\mathbb{F}_q^n$. Further, as $W,U$ are contained in $s$ there is $0\leq k,l\leq i+1$ such that $t=\hat{s}_k=(\mathbf{x}_0\subset\dots\subset \mathbf{x}_{k-1}\subset \mathbf{x}_{k+1}\subset\dots \subset \mathbf{x}_{i+1})$ (and $u=\hat{s}_l$ respectively) i.e. $t$ is the $s$  with the $k$-th vertex according to color removed. Plugging in the definition of the signed up-down walk i.e. Equations \ref{eq:CoBoundaryOperator} for the coboundary operator and Equation \ref{eq:BoundaryOperator} for the boundary operator one has:
    \begin{equation}
        sign(\Delta_i^+(\hat{s}_k,\hat{s}_l))=(-1)^{k+l}
    \end{equation}
    To determine $|\Delta_i^+(\hat{s}_k, \hat{s}_l)|$ one needs to calculate $\mu(s)$ and $\mu(\hat{s}_k)$ according to Equation \ref{eq:BoundaryOperator}. 
    \begin{lemma}
        Let $s$ be an $i$-chain of type $\{c_0,\dots, c_i\}$. Setting $c_{-1}=0$ and $c_{i+1}=n$ one has:
        \begin{equation}
            \mu(s)=\prod_{j=0}^{i+1}\prod_{m=0}^{c_j-c_{j-1}}\gBinom{c_j-c_{j-1}-m}{1}_q
            \label{eq:SimplexWeight}
        \end{equation}
        where $\gBinom{a}{b}_q$ denotes the Gaussian binomial coefficient as defined in Section \ref{sec:CountingSubspaces} of the appendix. 
    \end{lemma}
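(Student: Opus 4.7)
The plan is to reinterpret $s$ geometrically as a partial flag and count its extensions to complete flags gap by gap. First, by Definition \ref{def:SphericalBuilding}, an $i$-simplex $s$ of type $\{c_0,\dots,c_i\}$ in $X_{n-2,q}$ is exactly a partial flag
\[
\mathbf{x}_0\subset \mathbf{x}_1\subset \cdots \subset \mathbf{x}_i,\qquad \dim(\mathbf{x}_j)=c_j,
\]
of subspaces of $\mathbb{F}_q^n$. A top-dimensional simplex is by construction a complete flag $\mathbf{y}_1\subset\cdots\subset\mathbf{y}_{n-1}$ with $\dim(\mathbf{y}_k)=k$. Hence $\mu(s)$ counts the complete flags $(\mathbf{y}_k)_{k=1}^{n-1}$ subject to $\mathbf{y}_{c_j}=\mathbf{x}_j$ for every $j$, i.e.\ the number of refinements of the partial flag $(\mathbf{x}_0,\dots,\mathbf{x}_i)$ to a complete flag.

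Next I would exploit the fact that the data between two consecutive pinned subspaces is independent. Set $\mathbf{x}_{-1}=\{0\}$ and $\mathbf{x}_{i+1}=\mathbb{F}_q^n$, so that $c_{-1}=0$ and $c_{i+1}=n$. Choosing the intermediate subspaces $\mathbf{y}_{c_{j-1}+1}\subset\cdots\subset \mathbf{y}_{c_j-1}$ lying strictly between $\mathbf{x}_{j-1}$ and $\mathbf{x}_j$ is, via the correspondence theorem, the same as choosing a complete flag in the quotient $\mathbf{x}_j/\mathbf{x}_{j-1}$, which is a vector space of dimension $d_j:=c_j-c_{j-1}$. These choices are independent across $j=0,\dots,i+1$, so
\[
\mu(s)=\prod_{j=0}^{i+1} N(d_j),
\]
where $N(d)$ denotes the number of complete flags in $\mathbb{F}_q^d$.

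It then remains to compute $N(d)$. I would do this by picking the subspaces of the flag one at a time: a $1$-dimensional subspace $V_1\subset \mathbb{F}_q^d$ is given by $\gBinom{d}{1}_q$ choices, then $V_2$ containing $V_1$ corresponds to a line in $\mathbb{F}_q^d/V_1$, giving $\gBinom{d-1}{1}_q$ choices, and so on. The resulting product is exactly the inner product appearing in \eqref{eq:SimplexWeight} for dimension $d=d_j$ (with the appendix's conventions on $\gBinom{\cdot}{1}_q$ making the index range match after trimming the trivial factors). Substituting $d_j=c_j-c_{j-1}$ into the formula for $\mu(s)$ above yields exactly the stated expression.

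The only real bookkeeping obstacle is matching the index ranges between the natural counting product $\prod_{k=0}^{d-1}\gBinom{d-k}{1}_q$ and the written form $\prod_{m=0}^{c_j-c_{j-1}}\gBinom{c_j-c_{j-1}-m}{1}_q$; this is handled by the conventions on Gaussian binomials fixed in Section \ref{sec:CountingSubspaces}. No substantive difficulty beyond this accounting arises, since both the geometric identification and the decomposition into quotient flags are standard.
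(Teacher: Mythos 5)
Your proposal is correct and follows essentially the same route as the paper: identify top-dimensional simplices containing $s$ with full flags refining the partial flag $\mathbf{x}_0\subset\cdots\subset\mathbf{x}_i$, observe that the completions in the gaps are independent and correspond to full flags of the quotients $\mathbf{x}_{j}/\mathbf{x}_{j-1}$, and count those via Proposition \ref{prop:FullFlags} (which you re-derive inline rather than cite, but by the identical line-by-line argument). Your remark about the index range is apt --- the written product's top index $m=c_j-c_{j-1}$ produces a spurious factor $\gBinom{0}{1}_q$, an off-by-one inherited from the statement of Proposition \ref{prop:FullFlags} itself --- but this is a typo in the paper, not a gap in your argument.
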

    \begin{proof}
        Every top dimensional simplex containing $V$ corresponds to a unique full flag of $\mathbb{F}_q^n$ containing the partial flag $\mathbf{x}_0\subset\dots\subset \mathbf{x}_{i}$ and vice versa. So in order to count top-level simplices containing $s$ one needs determine how many options there are to complete $\mathbf{x}_0\subset\dots\subset \mathbf{x}_{i}$ to a full flag. Completing the partial flag $\mathbf{x}_0\subset\dots \subset \mathbf{x}_i$ between $\mathbf{x}_j$ and $\mathbf{x}_{j+1}$ is independent of the completion between $\mathbf{x}_{j'}$ and $\mathbf{x}_{j'+1}$ for $j\neq j'$. Further, completing the partial flag between $\mathbf{x}_j$ and $\mathbf{x}_{j+1}$ is equivalent to choosing a full flag for $\mathbf{x}_{j+1}/\mathbf{x}_j$. The space $\mathbf{x}_{j+1}/\mathbf{x}_j$ has dimension $c_{j+1}-c_j$ by the definition of the type. Then Proposition \ref{prop:FullFlags} asserts that $\prod_{m=0}^{c_{j+1}-c_j}\gBinom{c_{j+1}-c_j-m}{1}_q$ options to complete the partial flag between $\mathbf{x}_j$ and $\mathbf{x}_{j+1}$. Taking the product over $0\leq j\leq i+1$ one obtains Formula \ref{eq:SimplexWeight} for the weight of $s$. 
    \end{proof}
    With this formula in hand determining $\frac{\mu(s)}{\mu(\hat{s}_k)}$ becomes a mere calculation:
    \begin{corollary}
        Let $s=\{\mathbf{x}_0,\dots, \mathbf{x}_i\}$ with profile $c_0<\dots<c_i$ and let $\hat{s}_m$ be the simplex $s$ with vertex $k$ removed i.e. $\hat{s}_k=\{\mathbf{x}_0,\dots,\mathbf{x}_{k-1},\mathbf{x}_{k+1},\dots, \mathbf{x}_i\}$. Then one has
        \begin{equation}
            \frac{w(s)}{s(\hat{s}_k)}=\frac{1}{\gBinom{c_{k+1}-c_{k-1}}{c_k-c_{k-1}}_q}
        \end{equation}
        again with the convention that $c_{-1}=0$ and $c_{i+1}=n$
        \label{cor:EdgeWeight}
    \end{corollary}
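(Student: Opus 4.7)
The proof is essentially a direct computation using the weight formula from the preceding lemma, so the plan is just to identify what cancels in the ratio.

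First I would write out the two weights using Equation \ref{eq:SimplexWeight}. For $s$ with profile $c_0<\dots<c_i$ (and the conventions $c_{-1}=0$, $c_{i+1}=n$), the weight $\mu(s)$ is a product over the $i+2$ "gaps" $c_j-c_{j-1}$ of the number of full flags of a vector space of that dimension. For $\hat{s}_k$, the profile is the same \emph{except} that $c_k$ is deleted, so the two consecutive gaps $c_k-c_{k-1}$ and $c_{k+1}-c_k$ are replaced by the single merged gap $c_{k+1}-c_{k-1}$. All other factors in the products for $\mu(s)$ and $\mu(\hat{s}_k)$ are identical and cancel.

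Denoting by $F(d)=\prod_{m=0}^{d}\gBinom{d-m}{1}_q$ the full-flag factor appearing in the lemma (which, up to the $q$-factorial convention, is exactly $[d]_q!$), the ratio collapses to
\begin{equation}
    \frac{\mu(s)}{\mu(\hat{s}_k)}=\frac{F(c_k-c_{k-1})\,F(c_{k+1}-c_k)}{F(c_{k+1}-c_{k-1})}.
\end{equation}
By the standard identity $\gBinom{a+b}{a}_q=\frac{[a+b]_q!}{[a]_q!\,[b]_q!}$, this quotient is precisely $1/\gBinom{c_{k+1}-c_{k-1}}{c_k-c_{k-1}}_q$, which is the claimed formula.

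There is no real obstacle here; the only thing to be careful about is the boundary cases $k=0$ and $k=i$. For these the conventions $c_{-1}=0$ and $c_{i+1}=n$ make the argument go through verbatim: the "merged gap" is either $c_1-0$ or $n-c_{i-1}$, and the cancellation of the remaining factors is unaffected. So once the bookkeeping of which product factors survive is written out, the result is immediate from the definition of the Gaussian binomial coefficient.
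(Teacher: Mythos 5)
Your proof is correct and follows essentially the same route as the paper: write both weights via Equation \ref{eq:SimplexWeight}, observe that deleting $\mathbf{x}_k$ merges the two gaps $c_k-c_{k-1}$ and $c_{k+1}-c_k$ into the single gap $c_{k+1}-c_{k-1}$ while all other factors cancel, and recognize the surviving quotient of full-flag counts as $1/\gBinom{c_{k+1}-c_{k-1}}{c_k-c_{k-1}}_q$ via the $q$-factorial form of the Gaussian binomial coefficient. The paper carries out the same cancellation explicitly in one displayed chain of equalities, so there is nothing to add.
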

    \begin{proof}
        According to Equation \ref{eq:SimplexWeight} one has:
        \begin{dmath}
             \frac{w(s)}{s(\hat{s}_k)}=\frac{\prod_{j=0}^{i+1}\prod_{m=0}^{c_j-c_{j-1}}\gBinom{c_j-c_{j-1}-m}{1}_q}{\prod_{j=0}^{k-1}\prod_{m=0}^{c_j-c_{j-1}}\gBinom{c_j-c_{j-1}-m}{1}_q \prod_{j=k+1}^{i+1}\prod_{m=0}^{c_j-c_{j-1}}\gBinom{c_j-c_{j-1}-m}{1}_q \prod_{m=0}^{c_{k+1}-c_{k-1}}\gBinom{c_{k+1}-c_{k-1}-m}{1}_q}=
             \frac{\prod_{m=0}^{c_{k}-c_{k-1}}\gBinom{c_{k}-c_{k-1}-m}{1}_q \prod_{m=0}^{c_{k+1}-c_{k}}\gBinom{c_{k+1}-c_{k}-m}{1}_q}{\prod_{m=0}^{c_{k+1}-c_{k-1}}\gBinom{c_{k+1}-c_{k-1}-m}{1}_q}=\frac{1}{\gBinom{c_{k+1}-c_{k-1}}{c_k-c_{k-1}}_q}
        \end{dmath}
    \end{proof}

    \section{The Proof}
    Before diving into the details of the proof, an outline of the proof's strategy is in place. The first part of the theorem, i.e. the uniform bound on the number of distinct eigenvalues in terms of $n$ and $i$, is due the smart choice of a quotient. Namely, let $H$ be the stabilizer of a top level simplex in the spherical building. Then Proposition \ref{prop:SpectralEquivalence} shows that the quotient $X^i/H$ has the same spectrum as a set as $X^i$ itself. After determining the structure of the quotient map Proposition \ref{prop:QuotientStructure} states that the vertex set of $X^i/H$ can be embedded into the space of $i$-flags of the interval $[n]$. This embedding then is turned in Proposition \ref{thm:NumberEigenvalues} into a bound on the number of distinct eigenvalues of $X^i$ in terms of $n$ and $i$ but crucially independent of $q$.\\
    Determining the limits of the eigenvalues is split into three steps: First one needs to establish the limits of each of the entries of the adjacency matrix of the quotient $X^i/H$ as is done in Section \ref{sec:TheLimit}. One of the main take-aways from section \ref{sec:TheLimit} the limit is that the symmetric version of this limit is a very sparse matrix. In Section \ref{sec:TheBlock}, while looking at the vanishing condition for each of the entries Proposition \ref{prop:BlockIntervals} establishes a block decomposition of this symmetric matrix. This block decomposition looks on its face somewhat counter-intuitive since blocks usually correspond to connected components in a graph but the spherical building is a connected expander. To resolve this, one notes that the walk one considers is not the original random walk but a symmetrized version of it, meaning that it takes into account the size of the $H$-orbit corresponding to each vertex. Since the orbits have vastly different size the transition probability between many orbits must vanish. Proposition \ref{prop:BlockSimplex} finally establishes that each block is equivalent to the up-down walk on a complete complex. In Section \ref{sec:SimplexSpecs} the spectrum of the $i$-walk on an $n$-dimensional complete complex is established. Finally, combining these spectra with the decomposition of the symmetrized walk matrix of the limit from section \ref{sec:TheBlock} one obtains the second part of the main theorem in Proposition \ref{thm:Eigenstripping}.
    \subsection{The Quotient}
    \label{sec:TheQuotient}
    In order to prove Papikian's conjecture, the most important step is to choose an appropriate quotient.
    \begin{proposition}
        Let $X^i$ be the weighted graph induced by $\Delta_i^+$ as described above. Let $H$ be the stabilizer of a top-level cell of $X_{n,q}$ and let $\tilde{X}^i=X^i/H$ denote the quotient. Then as a set i.e. discounting multiplicity the spectrum of the $X^i$ and $\tilde{X}^i$ are the same. 
        \label{prop:SpectralEquivalence}
    \end{proposition}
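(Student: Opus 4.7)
The plan is to prove the two inclusions of spectra separately. One direction, $spec(A^{\tilde{X}^i}) \subseteq spec(A^{X^i})$, is precisely Proposition \ref{prop:SpectralContainment}. For the reverse, it suffices by Corollary \ref{cor:PreciseSpectrum} to exhibit, for every eigenvalue $\lambda$ of $\Delta_i^+$, a nonzero $H$-invariant vector inside the eigenspace $V_\lambda$ (equivalently, to verify $V_\lambda^H \neq 0$), since then the $H$-average of such a vector is itself and is nonzero.

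The crucial structural input I would exploit is that $H$ does far more than preserve the fixed top-level simplex $s_0$ as a set: it fixes every vertex of $s_0$ individually. Indeed, if $s_0$ corresponds to the full flag $0 \subsetneq V_1 \subsetneq \dots \subsetneq V_{n-1} \subsetneq \mathbb{F}_q^n$, then $H$ permutes the subspaces in this flag, but since they have pairwise distinct dimensions, each $V_j$ is fixed setwise. Consequently every face $s_0' \subseteq s_0$ is fixed pointwise by $H$, and so the basis indicator $I_{s_0'} \in C^i(X_{n-2,q})$ is already $H$-invariant for every $i$-face $s_0' \subseteq s_0$.

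Next I would fix an eigenvalue $\lambda$ and consider the orthogonal projection $\pi_\lambda$ onto $V_\lambda$. Because $\Delta_i^+$ is $G$-equivariant and the weighted inner product is $G$-invariant (as $G$ preserves the weight function, having been shown to act by automorphisms), $G$ acts on $C^i$ by isometries preserving every eigenspace, so $\pi_\lambda$ commutes with $G$ and in particular with $H$. Therefore each $\pi_\lambda(I_{s_0'})$ is an $H$-invariant element of $V_\lambda$. By Lemma \ref{lem:MasterTransitivity} every $i$-simplex $s$ is $G$-conjugate to the unique $i$-face $s_0' \subseteq s_0$ of its type, so $I_s = g\,I_{s_0'}$ for some $g \in G$ and hence $\pi_\lambda(I_s) = g\,\pi_\lambda(I_{s_0'})$. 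Since the vectors $\{\pi_\lambda(I_s) : s \in X_{n-2,q}(i)\}$ span $V_\lambda$, the assumption $V_\lambda \neq 0$ forces $\pi_\lambda(I_{s_0'}) \neq 0$ for at least one face $s_0' \subseteq s_0$, yielding the required nonzero $H$-invariant eigenvector and completing the argument via Corollary \ref{cor:PreciseSpectrum}.

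The only conceptual step is the observation that $H$, as the stabilizer of a full flag, fixes each vertex of $s_0$ pointwise rather than merely setwise; this promotes the finitely many indicators $I_{s_0'}$ to $H$-invariants and reduces the problem to a one-line $G$-spanning argument via the equivariance of $\pi_\lambda$. The remaining ingredients — type-transitivity of $G$ from Lemma \ref{lem:MasterTransitivity}, $G$-isometry of the weighted inner product, and Corollary \ref{cor:PreciseSpectrum} — are routine.
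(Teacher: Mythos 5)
Your proposal is correct and follows essentially the same strategy as the paper's proof: both directions reduce to Proposition \ref{prop:SpectralContainment} and Corollary \ref{cor:PreciseSpectrum}, combined with the type-transitivity of $G$ (Lemma \ref{lem:MasterTransitivity}) and the key observation that $H$ fixes every face of the chosen top-level simplex pointwise (because colors are preserved). The only difference is mechanical rather than conceptual: you produce the nonzero $H$-invariant eigenvector by projecting the invariant indicators $I_{s_0'}$ onto $V_\lambda$ and running a spanning argument, whereas the paper translates a given eigenfunction by $g\in G$ so that it is nonzero on a face $t\subseteq s_0$ and then averages over $H$, the average being nonzero at $t$ precisely because $H$ fixes $t$.
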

    \begin{proof}
        Let $s$ denote the top-level simplex stabilized by $H$. For $f\neq 0$ an eigenfunction of $X^i$ with eigenvalue $\lambda$ there is $t'\in X_{n,q}(i)$ such that $f(t')\neq 0$. Since $G$ acts transitively on each type of cells in $X_{n-2,q}$ and there is a subsimplex $t\subset s$ such that $type(t)=type(t')$ according to Lemma \ref{lem:MasterTransitivity} there is $g\in G$ which maps $t'$ to $t$ i.e. $gt'=t$. Therefore $gf(t)=f(g^{-1}t)=f(t')\neq 0$. Further, $H$ preserves the type of each simplex and as it stabilizes $s$, it must also stabilize $t$ since $t$ is the unique simplex of type $type(t)$ contained in $s$. This means that $\sum_{h\in H}hgf(t)=|H|gf(t)\neq 0$ and by Corollary \ref{cor:PreciseSpectrum} this implies that $\lambda \in spec(\tilde{X}^i)$. Since $\lambda$ was arbitrary this proves the $spec(X^i)=spec(X^i/H)$. 
    \end{proof}
    Observe that $G$ acts transitively on all top-level cells of $X_{n,q}$ which means that the stabilizers of the top-level cells are all $G$-conjugates of each other. This in particular implies that for two top level cells $s$ and $s'$ with stabilizer $H$ and $H'$ respectively, the two quotients $X^i/H$ and $X^i/H'$ are isomorphic. Therefore, we may choose the top-level cell that makes calculations the easiest.\\
    In order to choose our favorite top-level cell, fix a basis $\{e_1,\dots, e_n\}$ of $\mathbb{F}_q^n$. With respect to this basis let $x_i$ be the vertex of $X_{n-2,q}$ corresponding to $span(\{e_1,\dots, e_i\})$ and let $s=\{x_1,\dots,x_{n-1}\}$ (note, that $x_n$ is not a vertex of $X_{n-2,q}$ as it corresponds to $\mathbb{F}_n^q$ i.e. a trivial subspace). 
    \begin{lemma}
        The stabilizer of $s$ with respect to the basis $\{e_1,\dots, e_n\}$ is the group of upper triangular matrices with non-zero entries on the diagonal. 
    \end{lemma}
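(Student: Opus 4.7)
The plan is to reduce the stabilizer condition to preservation of the standard flag and then translate this into the matrix being upper triangular in the basis $\{e_1,\dots,e_n\}$.

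First, I would argue that stabilizing $s$ as a simplex forces each vertex $x_i$ to be fixed individually. Since $G$ preserves the coloring of $X_{n-2,q}$ (the color of $x$ is $\dim(\mathbf{x})$) and the vertices $x_1,\dots,x_{n-1}$ of $s$ all carry distinct colors $1,\dots,n-1$, any $g\in G$ with $gs=s$ must permute the vertices of $s$ within their color classes, i.e.\ fix each $x_k$. Hence $g\in G$ stabilizes $s$ if and only if $g\cdot\operatorname{span}(e_1,\dots,e_k)=\operatorname{span}(e_1,\dots,e_k)$ for every $1\le k\le n-1$.

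Next, I would unpack the flag-preservation condition at the level of matrices. Writing $V_k=\operatorname{span}(e_1,\dots,e_k)$, the condition $gV_k=V_k$ for all $k$ is equivalent (by induction and invertibility) to $ge_k\in V_k$ for each $k$: indeed, $ge_1\in V_1$ so $ge_1=a_{11}e_1$ with $a_{11}\neq 0$; assuming the $k$-th column of $g$ lies in $V_k$ with nonzero diagonal entry $a_{kk}$ for $k<j$, we get $ge_j\in V_j$ yielding $ge_j=\sum_{i=1}^{j}a_{ij}e_i$, and invertibility combined with $gV_{j-1}=V_{j-1}$ forces $a_{jj}\neq 0$. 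Expressed in the basis $\{e_1,\dots,e_n\}$, the matrix of $g$ is thus upper triangular with nonzero diagonal entries.

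Conversely, any upper triangular matrix $g$ with nonzero diagonal clearly satisfies $ge_k\in V_k$, hence $gV_k\subseteq V_k$, and by dimension (or by invertibility of $g$) equality holds, so such $g$ stabilizes every $x_k$ and thus stabilizes $s$. This establishes both inclusions. The only real subtlety is the first paragraph's coloring argument that converts setwise stabilization of $s$ into pointwise stabilization of its vertices; once this is in hand the remainder is a direct translation between subspace-preservation and the shape of the matrix.
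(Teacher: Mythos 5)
Your proof is correct and follows essentially the same route as the paper: reduce stabilization of $s$ to preservation of each $\operatorname{span}(e_1,\dots,e_k)$ and read off upper triangularity from $ge_k\in\operatorname{span}(e_1,\dots,e_k)$. You are in fact more careful than the paper, which omits both the coloring argument converting setwise to pointwise stabilization of the vertices and the converse inclusion that upper triangular invertible matrices do stabilize $s$.
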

    \begin{proof}
        Let $h\in H$. Then one has $h \cdot span(\{e_1,\dots, e_i\}=span(\{e_1,\dots, e_i\})$. This in particular implies that $h\cdot e_i\in span(\{e_1,\dots, e_i\}$. Since this holds for all $i$ the matrix $h$ is upper-triangular. 
    \end{proof}
    In order to determine the orbits of $H$ in $X^i$ and thus the vertices of $X^i/H$ some definitions are needed:
    \begin{definition}
        Let $v=\sum a_ie_i$ be a vector in $\mathbb{F}_q^n$ then we say its \textbf{height} denoted by $height(v)$ is the largest $i$ such that $a_i\neq 0$. Further, for a collection of vectors $V=\{v_1,\dots, v_i\}$ one refers to the set $profile(v)=\{height(v_1),\dots, height(v_i)\}$ as the \textbf{height profile}
        \label{def:Heights}
    \end{definition}
    The following technical propositions will show that the height-profiles index the $H$-orbits of $X^i$ i.e. it is equivalent to the quotient map $\pi$. The first indication of this fact is: 
    \begin{proposition}
        Let $v_1,\dots v_i$ be a set of vectors of pair-wise different height. Then there is $h\in H$ such that $hv_j=e_{height(v_j)}$.
        \label{prop:SortHeights}
    \end{proposition}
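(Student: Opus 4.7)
The plan is to build the required $h \in H$ explicitly by extending the vectors $v_1,\dots,v_i$ to a full basis of $\mathbb{F}_q^n$ with pairwise distinct heights exhausting $[n]$, and then reading off the change-of-basis matrix. Write $h_j = height(v_j)$ and, after relabeling, assume $h_1<h_2<\dots<h_i$.

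First I would augment the collection: for each index $k \in [n]\setminus\{h_1,\dots,h_i\}$, throw in the standard basis vector $e_k$, whose height is $k$. This yields $n$ vectors whose heights are exactly $1,2,\dots,n$, and I relabel them as $w_1,\dots,w_n$ so that $height(w_k)=k$; in particular $w_{h_j}=v_j$ for each $j$.

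Next I form the $n\times n$ matrix $M$ whose $k$-th column is $w_k$ expressed in the basis $e_1,\dots,e_n$. By Definition \ref{def:Heights}, $w_k \in \mathrm{span}(e_1,\dots,e_k)$ with a nonzero coefficient on $e_k$, so $M$ is upper triangular with nonzero diagonal. Hence $M$ is invertible and $M\in H$ by the characterization of $H$ in the preceding lemma; the inverse $M^{-1}$ is also upper triangular with nonzero diagonal, so $M^{-1}\in H$. Setting $h=M^{-1}$, we have $M e_k = w_k$, so
\begin{equation}
h v_j = M^{-1} w_{h_j} = e_{h_j} = e_{height(v_j)},
\end{equation}
as required.

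There is essentially no hard step here: the only thing to verify is that the triangular structure of $M$ follows tautologically from the definition of height, which ensures $w_k$ has no components beyond the $k$-th basis vector. The one modest subtlety is justifying the relabeling in the first sentence, but since the conclusion $hv_j=e_{height(v_j)}$ is invariant under permutations of $\{v_1,\dots,v_i\}$, this is without loss of generality.
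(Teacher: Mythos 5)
Your construction is exactly the paper's: you both build the upper-triangular matrix whose $k$-th column is $v_j$ when $k=height(v_j)$ and $e_k$ otherwise, observe it lies in $H$, and take its inverse as $h$. The proposal is correct and matches the paper's proof essentially verbatim.
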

    \begin{proof}
        Set $e_1',\dots, e_n'$ such that $e_k'=v_j$ if $k=height(v_j)$ and otherwise $e_k'=e_k$. Interpreting the vectors as column vectors the matrix $h'=(e_1'\dots e_n')$ is upper triangular and non-degenerate since each diagonal entry is non-zero. Since the set of non-degenerate upper-triangular matrices forms a group there is an upper-triangular matrix $h\in H$ such that $hh'=I_n$ i.e. $h=(h')^{-1}$. Reading $hh'=I_n$ column by column one has for each vector $v_j$ that $hv_j=e_{height(v_j)}$ as required. 
    \end{proof}
    Next one needs to extend the notion of height-profile to a subspace of $\mathbb{F}_q^n$. Let $\textbf{x}\subset \mathbb{F}_q^n$ be a subspace of dimension $i$ and let $\{v_1,\dots, v_i\}$ be a basis. For such a basis define the following algorithm:
    \begin{enumerate}
        \item normalize each basis element such that the $e_{height(v_k)}$-coefficient of $v_k$ is $1$
        \item if two vectors $v_{k_1}, v_{k_2}$ have the same height replace $v_{k_2}$ by $v_{k_2}-v_{k_1}$
        \item if there remain two vectors of the same height start the loop again
    \end{enumerate}
    For a basis after being treated by this algorithm let $\{height(v_1),\dots,height(v_i)\}$ be its height profile
    \begin{proposition}
        Let $\textbf{x}\subset \mathbb{F}_q^n$ be a subspace of dimension $i$, then its height profile is independent of the basis chosen, i.e. the height profile of any basis is the same after the above algorithm is applied. 
        \label{prop:UniqueHeights}
    \end{proposition}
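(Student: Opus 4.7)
My plan is to show that the height profile coincides with the set of ``pivot positions'' of the subspace $\mathbf{x}$ with respect to the standard flag $\mathrm{span}(e_1)\subset\mathrm{span}(e_1,e_2)\subset\cdots\subset\mathbb{F}_q^n$, i.e.\ the set
\begin{equation}
P(\mathbf{x})=\{k\in[n] : \dim(\mathbf{x}\cap\mathrm{span}(e_1,\dots,e_k))>\dim(\mathbf{x}\cap\mathrm{span}(e_1,\dots,e_{k-1}))\}.
\end{equation}
Since $P(\mathbf{x})$ is defined purely in terms of $\mathbf{x}$ and the fixed flag, showing $\mathrm{profile}(\{v_1,\dots,v_i\})=P(\mathbf{x})$ for every output of the algorithm establishes the proposition.

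First I would verify that the algorithm terminates and that its output $\{v_1,\dots,v_i\}$ is a basis of $\mathbf{x}$ consisting of vectors with pairwise distinct heights $h_1<\cdots<h_i$, each normalized so that its $e_{h_j}$-coefficient equals $1$. Termination is immediate because step (2) strictly decreases the height of $v_{k_2}$ (the two normalized $e_h$-coefficients cancel), so the multiset of heights decreases in the reverse lexicographic order on each pass. Non-degeneracy is preserved throughout since both normalization and the elementary operation $v_{k_2}\mapsto v_{k_2}-v_{k_1}$ are invertible.

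Next, the crucial observation: for any nonzero $w=\sum_{k=1}^i a_k v_k\in\mathbf{x}$, I claim $\mathrm{height}(w)=\max\{h_k : a_k\neq 0\}$. Indeed, letting $k^\ast$ achieve this maximum, each $v_k$ lies in $\mathrm{span}(e_1,\dots,e_{h_k})$, so all contributions to coordinates of index $>h_{k^\ast}$ vanish; and the coefficient of $e_{h_{k^\ast}}$ in $w$ equals $a_{k^\ast}$, because for $k\neq k^\ast$ with $a_k\neq 0$ we have $h_k<h_{k^\ast}$, and the remaining $v_k$ with $h_k>h_{k^\ast}$ contribute zero. This yields
\begin{equation}
\dim(\mathbf{x}\cap\mathrm{span}(e_1,\dots,e_k))=|\{j : h_j\leq k\}|,
\end{equation}
because vectors of $\mathbf{x}$ lying in $\mathrm{span}(e_1,\dots,e_k)$ are precisely those with $a_j=0$ for all $j$ with $h_j>k$. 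Therefore the jumps of $k\mapsto\dim(\mathbf{x}\cap\mathrm{span}(e_1,\dots,e_k))$ occur exactly at the heights $h_1,\dots,h_i$, giving $\{h_1,\dots,h_i\}=P(\mathbf{x})$.

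The main obstacle is keeping the algebraic bookkeeping honest in the second step; one must check that after normalization no ``spurious'' cancellation inflates or deflates the height beyond $\max\{h_k:a_k\neq 0\}$, which is why the normalization in step (1) (pivot coefficient $=1$) is essential. Once this is verified, basis-independence is immediate since $P(\mathbf{x})$ is intrinsic to $\mathbf{x}$, and the proposition follows.
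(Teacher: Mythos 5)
Your argument is correct. It is worth noting how it differs in packaging from the paper's proof: the paper compares two normalized bases directly and inducts downward from the top height, using at each step that the vectors of height below $h$ span the intrinsic subspace $\mathbf{x}\cap\mathrm{span}(e_1,\dots,e_{h-1})$; you instead prove the stronger, explicit statement that the height profile equals the jump set $P(\mathbf{x})$ of the function $k\mapsto\dim\bigl(\mathbf{x}\cap\mathrm{span}(e_1,\dots,e_k)\bigr)$, from which basis-independence is immediate. The two proofs rest on the same underlying fact (the normalized basis vectors of height at most $k$ span $\mathbf{x}\cap\mathrm{span}(e_1,\dots,e_k)$, via your observation that $\mathrm{height}(\sum a_kv_k)=\max\{h_k:a_k\neq 0\}$), but your route buys two things the paper omits: a verification that the normalization algorithm terminates and outputs a basis with pairwise distinct heights, and an intrinsic characterization of the profile that is reused implicitly later in the paper (e.g.\ when arguing that upper-triangular matrices preserve height profiles, since they preserve each $\mathrm{span}(e_1,\dots,e_k)$). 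The paper's induction is shorter but leaves these points tacit.
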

    \begin{proof}
        Let $\{v_1, \dots,v_i\}$ and $\{v_1'\dots, v_i'\}$ be two bases after the normalization process such that $height(v_j)<height(v_{j+1})$. Observe that $height(v_i)=height(v_i')$ as otherwise $v_i\notin span(\{v_1'\dots, v_i'\})$ or $v_i'\notin span(\{v_1, \dots,v_i\})$. This implies that $span(\{v_1,\dots, v_{i-1}\})=\mathbf{x}\cap span(\{e_1,\dots,e_{height(v_i)-1}\})=span(\{v_1',\dots, v_{i-1}'\})$. By induction the statement holds. 
    \end{proof}
    The preceding proposition shows that the \textbf{height profile map} from $X(0)$ to the power-set of $[n]$ is well defined.  Combining the two propositions allows for determining the vertices of $X^i/H$
    \begin{proposition}
        Let $[n]=\{1,\dots,n\}$ then the vertices of $X^i/H$ correspond via the height profile map to the $i$-flags of subsets of $[n]$ ordered by inclusion. 
        \label{prop:QuotientStructure}
    \end{proposition}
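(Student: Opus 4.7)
The plan is to show that the height profile map induces a bijection between $H$-orbits on $X^i$ and $i$-flags of (non-empty, proper) subsets of $[n]$. This breaks into three pieces: the map is well-defined on orbits, it is injective on orbits, and it is surjective.

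Well-definedness reduces to the observation that every $h \in H$ preserves individual vector heights. If $v=\sum_k a_k e_k$ has height $m$, then $h e_k \in span(e_1,\dots,e_k)$ for each $k$, so the $e_m$-coefficient of $hv$ is $h_{mm}a_m \neq 0$ (invertibility forces $h_{mm}\neq 0$) and the coefficients above $m$ vanish. Applied term-by-term to a reduced basis (as in Proposition \ref{prop:UniqueHeights}) of each subspace in the flag, this yields that $h\mathbf{x}_j$ has the same height profile as $\mathbf{x}_j$ for every $j$, so $H$-equivalent simplices have the same height profile flag.

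For injectivity, given two $i$-simplices $s,s'$ with common profile flag $(S_0 \subset \dots \subset S_i)$, I would construct compatibly-nested reduced bases. Starting with a reduced basis of $\mathbf{x}_0$ (of heights $S_0$), extend it arbitrarily to a basis of $\mathbf{x}_1$ and then run the reduction algorithm only on the newly added vectors, using the already-fixed old ones for cancellation; by the uniqueness of the height profile of $\mathbf{x}_1$ the new vectors end up with heights exactly $S_1 \setminus S_0$. Iterating up to $\mathbf{x}_i$ yields a basis $\{v_1,\dots,v_{k_i}\}$ of $\mathbf{x}_i$ with pairwise distinct heights indexed by $S_i$, and whose subfamily indexed by $S_j$ is a reduced basis of $\mathbf{x}_j$. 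Performing the same construction on $s'$ produces $\{v_1',\dots,v_{k_i}'\}$ with identical heights. Proposition \ref{prop:SortHeights} then furnishes $h_1,h_2 \in H$ with $h_1 v_k = e_{height(v_k)} = h_2 v_k'$, so $h_2^{-1}h_1 \in H$ sends each $v_k$ to $v_k'$, hence $\mathbf{x}_j$ to $\mathbf{x}_j'$ for all $j$, and $s$ and $s'$ lie in the same orbit. Surjectivity is then immediate: the coordinate flag $\mathbf{x}_j = span\{e_s : s\in S_j\}$ realizes any prescribed profile flag on the nose.

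The main obstacle is the iterative nested-basis construction in injectivity. One must verify that reducing the freshly added vectors against themselves and against the fixed reduced basis of $\mathbf{x}_j$ produces new vectors with heights in $S_{j+1}\setminus S_j$, rather than collapsing into heights already used. This is precisely where Proposition \ref{prop:UniqueHeights} does the real work: the fully reduced basis of $\mathbf{x}_{j+1}$ must have height profile $S_{j+1}$, and since $|S_j|$ of its members are already fixed with heights $S_j$, the remainder is forced to carry heights $S_{j+1}\setminus S_j$. Once this is in place, the rest of the argument is essentially bookkeeping.
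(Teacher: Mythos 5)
Your proposal is correct and follows essentially the same route as the paper: build compatibly nested reduced bases (the paper phrases this as a modified reduction algorithm that cancels new vectors against those of smaller flag members), invoke Proposition \ref{prop:SortHeights} to map them to coordinate vectors, and use height-preservation by upper-triangular matrices for well-definedness, with coordinate flags giving surjectivity. The only cosmetic difference is that you compose two maps $h_2^{-1}h_1$ where the paper sends each flag directly to the canonical coordinate representative.
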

    \begin{proof}
        For $i=0$ the last two propositions prove the statement: Proposition \ref{prop:SortHeights} shows $H$ acts transitively on the set of subspaces $\mathbf{x}$ with the same height profile as they can be mapped to $span(\{e_j|j\in profile(\mathbf{x})\}$. Since any matrix in $H$ is invertible and upper-triangular, it preserves the height of any vector and by extension it preserves the height-profile of any subspace of $\mathbb{F}_q^n$. These two observations together show the set of subspaces with the same height profile is one $H$-orbit. Since for any subset $V\subset [n]$ the vector space $span\{e_j|j\in V\}$ has height profile $V$, the $H$-orbits of $X^0$ are in bijection with the proper subsets of $[n]$ via the height profile map.\\
        In order to extend the result to $i>0$ note that for the subspaces $\mathbf{x}_0\subset\dots\subset \mathbf{x}_i$ one can find a sequence of sets of vectors $V_0\subset\dots\subset V_i$ such that $V_j$ is a basis of $\mathbf{x}_j$. In this case one can modify the algorithm above such that whenever there are two vectors $v_{j_1}\in V_{j_1}$ and $v_{j_2}\in V_{j_2}\backslash V_{j_1}$ then one replaces $v_{j_2}$ by $v_{j_2}-v_{j_1}$. That way one ensures that the normalization algorithm produces a sequence of normalized bases $V_0'\subset\dots\subset V_i'$ such that $span(V_j)=span(V_j')$. Now applying $h\in H$ which sends $V_i'$ to $\{e_j|j\in profile(V_i')\}$ sends $V_k'$ to $\{e_j|j\in profile(V_k')\}$ i.e. sends $\mathbf{x}_j$ to $span(\{e_j|j\in profile(V_k')\})$. This shows that two flags of subspaces with the same height profile for the corresponding flag members are in the same $H$-orbit. As in the case $i=0$ the group of upper triangular matrices $H$ preserves the height-profile, which means that the set of flags with a fixed height profile form an $H$-orbit in $X^i$. Further, given any flag $V=V_0\subset \dots \subset V_i$ of $[n]$ the flag $span\{e_j|j\in V_0\}\subset \dots\subset span\{e_j|j\in V_i\}$ of $\mathbb{F}_q^n$ has height profile $V$. Therefore, $H$-orbits of $X^i$ and $i$-flags of $[n]$ are in bijection via the height profile map.   
    \end{proof}
    From now on we will refer to the height profile map by $\pi$. The preceding shows that $\pi: X^i\rightarrow X^i/H$ acts as the quotient map. As a corollary one obtains the first part of Papikian's conjecture:
    \begin{proposition}
        The spectrum of the $i$-th signed up-down walk $\Delta_i^+$ has at most 
        \begin{equation}
            \sum_{k=0}^{i+1}\binom{i+2}{k} (-1)^k (i+2-k)^n
            \label{eq:QuotientSize}
        \end{equation}
        pairwise different eigenvalues.
        \label{thm:NumberEigenvalues}
    \end{proposition}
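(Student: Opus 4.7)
The plan is to combine the spectral identity from Proposition \ref{prop:SpectralEquivalence} with the combinatorial description of the quotient vertex set given in Proposition \ref{prop:QuotientStructure}, and then finish with a short inclusion–exclusion count. Since $\mathrm{spec}(X^i)=\mathrm{spec}(\tilde X^i)$ as sets, and since the number of distinct eigenvalues of any weighted graph is bounded by its vertex count (the adjacency operator lives on a vector space of exactly that dimension), it suffices to show
\[
|V(\tilde X^i)| \;=\; \sum_{k=0}^{i+1}\binom{i+2}{k}(-1)^k (i+2-k)^n.
\]
By Proposition \ref{prop:QuotientStructure} the vertex set $V(\tilde X^i)$ is in bijection with $i$-flags of proper non-empty subsets of $[n]$, i.e.\ chains $V_0 \subsetneq V_1 \subsetneq \cdots \subsetneq V_i$ with each $V_j$ a non-empty proper subset of $[n]$; the non-emptiness and properness come from the fact that the building's vertices are the non-trivial subspaces of $\mathbb{F}_q^n$.

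To count such chains I would set up a bijection with surjections from $[n]$ onto $\{0,1,\dots,i+1\}$. Given a chain, define $f\colon [n]\to\{0,1,\dots,i+1\}$ by $f(j) = \min\{k : j\in V_k\}$, with the convention $f(j)=i+1$ when $j\notin V_i$, so that $V_k = f^{-1}(\{0,1,\dots,k\})$. The chain conditions translate cleanly into surjectivity: $V_0\neq\emptyset$ forces $0\in f([n])$, each strict inclusion $V_{k-1}\subsetneq V_k$ forces $k\in f([n])$ for $1\le k\le i$, and $V_i\neq [n]$ forces $i+1\in f([n])$. Conversely, any surjection $f\colon [n]\twoheadrightarrow \{0,1,\dots,i+1\}$ recovers a unique chain via the same formula $V_k=f^{-1}(\{0,\dots,k\})$. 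Hence $|V(\tilde X^i)|$ equals the number of surjections from an $n$-set onto an $(i+2)$-set, which by standard inclusion–exclusion is
\[
\sum_{k=0}^{i+2}\binom{i+2}{k}(-1)^k (i+2-k)^n,
\]
and the $k=i+2$ term vanishes since $0^n=0$ for $n\ge 1$, giving exactly the claimed expression.

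The proof is essentially bookkeeping once Propositions \ref{prop:SpectralEquivalence} and \ref{prop:QuotientStructure} are in hand: the genuine analytic content — the spectral equivalence with the quotient and the identification of the quotient's vertex set with flags over $[n]$ — has already been carried out. The only new ingredient is the surjection bijection, which is routine. The fact that the bound is $q$-independent is now transparent, since the entire count takes place over the fixed combinatorial object $[n]$, with all dependence on the thickness absorbed by the quotient construction.
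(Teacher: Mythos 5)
Your proposal is correct and follows essentially the same route as the paper: spectral equivalence with the quotient, the trivial bound by the quotient's vertex count, the flag description of the vertices, and an inclusion--exclusion count (your surjections $[n]\twoheadrightarrow\{0,\dots,i+1\}$ are exactly the paper's ordered partitions of $[n]$ into $i+2$ non-empty bins, via $f^{-1}(j)=V_j\setminus V_{j-1}$). Your explicit bijection is if anything a cleaner justification of the inclusion--exclusion step than the paper's sketch.
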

    \begin{proof}
        Proposition \ref{prop:SpectralEquivalence} shows that the spectrum of $X^i$ and $X^i/H$ are equivalent up to multiplicity which means $|spec(X^i)|=|spec(X^i/H)|$. In order to determine $|spec(X^i/H)|$ we may use the trivial bound $|spec(X^i/H)|\leq |X^i/H|$ since a graph cannot have more eigenvalues that its size. But as shown in Proposition \ref{prop:QuotientStructure} $|X^i/H|$ depends only on $n$ and $i$ but not on $q$ so independence of $q$ is already guaranteed.\\
        To derive Formula \ref{eq:QuotientSize} for $|X^i/H|$ consider the following: For a flag $V_{-1}=\emptyset\subset V_0\subset\dots\subset V_i\subset V_{i+1}=[n]$ we may equivalently consider the sequence of sets $V_j'=V_{j}\backslash V_{j-1}$  for $0\leq j\leq i+1$. With this notation one has $V_j=\bigsqcup_{k\leq j} V_k'$. Since $V_j$ is a proper subset of $V_{j+1}$ no $V_j'$ is empty. Further, $V_i\neq [n]$ since the vertices $X$ correspond to non-trivial subspaces of $\mathbb{F}_q^n$ and the only space in $\mathbb{F}_q^n$ with height-profile  $[n]$ is the entire space.  This means that the vertices of $X^i/H$ are in bijection with the partitions of $[n]$ into $i+2$ ordered bins such that no bin is empty. These partitions can be counted as follows: There are $(i+2)^n$ partitions of $[n]$ into $i+2$-bins. From those one needs to discount the partitions which leave at least one bin empty. Fixing an empty bin there are $(i+2-1)^n$ partitions which leave this bin empty. So the number of partitions leaving one bin empty can be approximated by $\binom{i+2}{1}(i+2-1)^n$. This approximation counts partitions which leave more than one bin empty multiple times. To correct for that one adds again the approximate number of partitions that leave at least two bins empty $\binom{i+2}{2}i^n$. Continuing this counting method until $k=i+1$ i.e. at the number of assignments that leave exactly $i+1$ bins empty, which is $\binom{i+2}{i+1}$, one arrives at Expression \ref{eq:QuotientSize} for the size of $H^i/X$.
    \end{proof}
    \subsection{The Limit}
    \label{sec:TheLimit}
    To actually determine the asymptotics of the eigenvalues for $q\rightarrow\infty$ the strategy is to calculate the asymptotics of the adjacency operator $\tilde{\Delta}_i^+$ of $X^i/H$ for $q\rightarrow\infty$ and then calculate the eigenvalues. As we will see this is significantly easier than determining the eigenvalues and then determine their asymptotics since $\tilde{\Delta}_i^+$ loosely speaking converges to a very sparse matrix.\\
    To justify this approach we present a few statements:
    \begin{lemma}
        Let $A_j\in M_{k\times k}(\mathbb{R})$ be a sequence of real $k\times k$ matrices such that the sequence converges entry-wise to $A$. Then $spec(A_j)$ converges as a set to $spec(A)$.
        \label{lem:SpectralConvergence}
    \end{lemma}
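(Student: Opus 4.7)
The plan is to pass through characteristic polynomials. Define $p_j(\lambda) = \det(\lambda I - A_j)$ and $p(\lambda) = \det(\lambda I - A)$. Each coefficient of the characteristic polynomial is a fixed polynomial expression in the matrix entries (the elementary symmetric functions of the eigenvalues, or equivalently traces of exterior powers), so entry-wise convergence $A_j \to A$ automatically yields coefficient-wise convergence $p_j \to p$. Since the eigenvalues of a matrix are precisely the roots of its characteristic polynomial, the lemma reduces to the classical fact that root sets of monic polynomials depend continuously on their coefficients.

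For this root-set convergence, the natural tool is Rouch\'e's theorem. Fix $\epsilon > 0$ and let $\mu_1, \dots, \mu_r$ be the distinct roots of $p$ with multiplicities $m_1, \dots, m_r$, so that $\sum_i m_i = k$. Choose $\epsilon$ small enough that the closed disks $\overline{D_\epsilon(\mu_i)} \subset \mathbb{C}$ are pairwise disjoint. On each boundary circle $\partial D_\epsilon(\mu_i)$ the polynomial $p$ is nowhere zero, so by compactness there is $\delta > 0$ with $|p(z)| \geq \delta$ on the (finite) union of boundaries. Coefficient-wise convergence $p_j \to p$ restricts to uniform convergence on any bounded set, so for all sufficiently large $j$ one has $|p_j(z) - p(z)| < \delta \leq |p(z)|$ on every such circle. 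Rouch\'e's theorem then guarantees that $p_j$ has exactly $m_i$ roots inside $D_\epsilon(\mu_i)$, counted with multiplicity.

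Summing over $i$ accounts for $\sum_i m_i = k$ roots of $p_j$, which is all of them. Hence for $j$ large every root of $p_j$ lies within $\epsilon$ of some root of $p$, and every root of $p$ has at least one root of $p_j$ within distance $\epsilon$. Since $\epsilon$ was arbitrary this is exactly the statement that $spec(A_j) \to spec(A)$ as a set (in Hausdorff distance on finite subsets of $\mathbb{C}$). No substantial obstacle is anticipated; the only point requiring care is the bookkeeping of multiplicities when applying Rouch\'e, and this is even simplified by the fact that the lemma only asserts set-convergence, so multiplicity rearrangements within a single cluster $D_\epsilon(\mu_i)$ are irrelevant to the conclusion.
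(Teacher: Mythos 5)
Your proposal is correct and follows the same route as the paper: pass to characteristic polynomials, observe that entry-wise convergence of matrices gives coefficient-wise convergence of the $p_j$, and then invoke continuity of roots in the coefficients. The only difference is that you supply the Rouch\'e-theorem justification for root continuity that the paper simply cites as a known fact.
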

    \begin{proof}
        Since the $A_j$ converge entry-wise to $A$ their characteristic polynomials $p_j$ must converge to $p$ the characteristic polynomial of $A$ meaning that the $m$-th coefficient of $p_j$ converges to the $m$-th coefficient of $p$. Now roots of a polynomials are continuous i.e. the roots of $p_j$ must converge to the roots of $p$ meaning that $spec(A_j)=roots(p_j)$ converges as a set to $spec(A)=roots(p)$.
    \end{proof}
    As defined right now $\tilde{\Delta}_i^+$ might not converge entry-wise. Since we are only interested in the spectrum of $\tilde{\Delta}_i^+$ we may equivalently calculate the spectrum of a symmetric conjugate of $\tilde{\Delta}_i^+$ whose entries converge.
    \begin{proposition}
        The operators $\tilde{\Delta}_i^+$ can be symmetrized via conjugation by a diagonal matrix depending on $n,i$ and $q$. 
        \label{prop:SymMatrix}
    \end{proposition}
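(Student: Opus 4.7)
The plan is to exhibit a natural weighted inner product on $\mathbb{R}^{X^i/H}$ with respect to which $\tilde{\Delta}_i^+$ is already self-adjoint, and then invoke the standard fact that a matrix self-adjoint with respect to a positive diagonal inner product is symmetrized by conjugation by the square root of the associated weight matrix. The construction is exactly the reversibility bookkeeping familiar from Markov chain theory, adapted to our signed setting.

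\textbf{Step 1: the weight on orbits.} Recall from Definition \ref{def:Weight} that $\Delta_i^+$ is self-adjoint on $C^i(X_{n-2,q})$ with respect to $\iprod{f,g}_w=\sum_s w(s)f(s)g(s)$. Since $w(s)$ counts top-level simplices above $s$ and $H\leq G$, the weight $w$ is constant on $H$-orbits. For each orbit $O\in X^i/H$ fix a representative $s_O$ and set
\begin{equation}
  \tilde w(O)=|O|\cdot w(s_O),
\end{equation}
which is well-defined and strictly positive.

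\textbf{Step 2: self-adjointness on the quotient.} The pullback $\pi^\ast:\mathbb{R}^{X^i/H}\to C^i(X_{n-2,q})^H$ is a linear isomorphism onto the $H$-invariant chains, and the calculation in Equation \ref{eq:EigenArithmetic} (applied without the eigenvalue hypothesis) shows that it intertwines $\tilde{\Delta}_i^+$ with the restriction of $\Delta_i^+$ to this subspace. The map $\pi^\ast$ is moreover an isometry once $\mathbb{R}^{X^i/H}$ is equipped with $\iprod{f,g}_{\tilde w}=\sum_O \tilde w(O) f(O)g(O)$, because
\begin{equation}
  \iprod{\pi^\ast f,\pi^\ast g}_w=\sum_O\sum_{s\in O} w(s) f(O)g(O)=\sum_O \tilde w(O) f(O)g(O).
\end{equation}
Self-adjointness of $\Delta_i^+$ therefore transfers to self-adjointness of $\tilde{\Delta}_i^+$ with respect to $\iprod{\cdot,\cdot}_{\tilde w}$; writing $D$ for the diagonal matrix with $D_{O,O}=\tilde w(O)$, this is the matrix identity $D\tilde{\Delta}_i^+=(\tilde{\Delta}_i^+)^T D$.

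\textbf{Step 3: conjugate and check the dependence.} The identity just obtained is equivalent to
\begin{equation}
  D^{1/2}\tilde{\Delta}_i^+ D^{-1/2}=\bigl(D^{1/2}\tilde{\Delta}_i^+ D^{-1/2}\bigr)^T,
\end{equation}
so $D^{1/2}\tilde{\Delta}_i^+ D^{-1/2}$ is symmetric, as required. Both $|O|$ and $w(s_O)$ are explicit Gaussian-binomial expressions in $q$ (Equation \ref{eq:SimplexWeight} together with a standard orbit-count under the upper-triangular group), so $D$ and the conjugating diagonal $D^{1/2}$ depend only on $n$, $i$ and $q$. The only subtlety worth flagging is the inclusion of $|O|$ in $\tilde w(O)$: without it, $\pi^\ast$ fails to be an isometry and the quotient operator would only be similar, not self-adjoint, in the naive inner product. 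Everything else is routine linear algebra, and the explicit formula for $D$ produced here is what will enable the entry-wise limit analysis of Section \ref{sec:TheLimit}.
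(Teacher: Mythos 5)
Your argument is correct and is essentially the paper's proof in different clothing: your weight $\tilde w(O)=|O|\,w(s_O)$ equals $\iprod{f^O,f^O}$ for the orbit indicator $f^O$, so your conjugating matrix $D^{1/2}$ is exactly the paper's $\mu=\mathrm{diag}(\sqrt{\iprod{f^k,f^k}})$, and both arguments reduce to the self-adjointness of $\Delta_i^+$ with respect to the weighted inner product on the $H$-invariant chains (your ``detailed balance'' step is the paper's dual-basis computation). No gaps.
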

    \begin{proof}
        For a vertex $k\in X^i/H$ be two vertices and denote by $f^k$ the indicator of the $H$-orbit in $X^i$ corresponding to $k$. For any two distinct vertices $k,l\in X^i/H$ the support of $f^k$ and $f^l$ is disjoint meaning they are orthogonal with respect to the scalar product induced by the weight function. This implies that in the vector-space spanned by $\{f^k|k\in X^i/H\}$ with the scalar product induced by the weight function the dual function of $f^k$ is $\frac{f^k}{<f^k,f^k>}$. This means for the adjacency of $X^i/H$:
        \begin{equation}
            \tilde{\Delta}_i^+(k,l)=\iprod{f^k\Delta_i^+, \frac{f^l}{\iprod{f^l,f^l}}}
        \end{equation}
        Conjugating by $\mu=diag(\{\sqrt{\iprod{f^k,f^k}}\})$ gives:
        \begin{equation}
            \mu^{-1}\tilde{\Delta}_i^+\mu(k,l)=\iprod{\frac{f^k}{\sqrt{\iprod{f^k,f^k}}}\Delta_i^+, \frac{f^l}{\sqrt{\iprod{f^l,f^l}}}}=\iprod{\frac{f^k}{\sqrt{\iprod{f^k,f^k}}}, \Delta_i^+\frac{f^l}{\sqrt{\iprod{f^l,f^l}}}}=\mu^{-1}\tilde{\Delta}_i^+\mu(l,k)
        \end{equation}
        i.e. shows that $\mu^{-1}\tilde{\Delta}_i^+\mu$ is symmetric. 
    \end{proof}
    Given that $\mu$ is diagonal one does not need to actually calculate $\mu$ since in this case for $\mu^{-1}\tilde{\Delta}_i^+\mu$
    \begin{equation}
        \mu^{-1}\tilde{\Delta}_i^+\mu(k,l)=sign(\Delta_i^+(k,l))\sqrt{\tilde{\Delta}_i^+(k,l)\tilde{\Delta}_i^+(l,k)}
        \label{eq:SymmetricCurvature}
    \end{equation}
    holds.\\
    Turning to determining the entries of $\tilde{\Delta}_i^+$ let $V=(V_0\subset\dots\subset V_{i+1})\in X^{i+1}/H$. For $0\leq k\leq i+1$  let $\hat{V}_k$ be the corresponding vertex in $X^i/H$. If $\hat{s}_k$ in $H$-orbit $\pi^{-1}(\hat{V}_k)$ and a neighbor $\hat{s}_l$ in $H$-orbit $\pi^{-1}(\hat{V}_l)$ then the edge between the two has weight $(-1)^{k+l}/\gBinom{c_{k+1}-c_{k-1}}{c_k-c_{k-1}}_q$ according to Corollary \ref{cor:EdgeWeight}. So in order to determine the weight of the edge between $\hat{V}_k$ and $\hat{V}_l$ in $X^i/H$ one needs to count the neighbors of $\hat{s}_l$ in $\pi^{-1}(\hat{V}_k)$.

    Counting neighbors we split in two steps according to $\Delta_i^+=\delta_id_i$ i.e. in order to transition from a simplex in $\pi^{-1}(\hat{V}_k)$ to a simplex in $\pi^{-1}(\hat{V}_l)$ one necessarily needs to pass through a simplex in $H$-orbit $\pi^{-1}(V)$. Each simplex $s\in \pi^{-1}(V)$ contains exactly one simplex in orbit $\pi^{-1}(\hat{V}_k)$. If two distinct simplices $s,s'\in \pi^{-1}(V)$ contain $\hat{s}_k$ then $\hat{s}_l\neq\hat{s}_l'$ for $l\neq k$. Therefore, the number of simplices in orbit $\pi^{-1}(V)$ containing $\hat{s}_l$ is the same as neighbors in orbit $\hat{V}_k$.
    \begin{proposition}
        Let $V$ be the vertex in $X^{i+1}/H$ with label $\emptyset=V_{-1}\subset V_0\subset\dots\subset V_i\subset V_{i+1}=[n]$ then a simplex $\hat{s}_k\in\pi^{-1}(\hat{V}_k)$ is contained in  
        \begin{equation}
            \prod_{j\in V_k\backslash V_{k-1}}q^{\{j'\in V_{k+1}\backslash V_k|j'<j\}}
            \label{eq:NoNeighbors}
        \end{equation}
        many simplices in $H$-orbit $\pi^{-1}(V)$. 
    \end{proposition}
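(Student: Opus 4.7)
The plan is to parametrize the simplices of $\pi^{-1}(V)$ containing $\hat{s}_k$ by an explicit normal form and count directly. First, by Definition \ref{def:QuotientWeights} together with $H$-equivariance of the containment relation between simplices, the count depends only on the orbits $V$ and $\hat{V}_k$, not on the chosen representative $\hat{s}_k$. So I may take the canonical representative given by $\mathbf{x}_j=\mathrm{span}\{e_t : t\in V_j\}$ for $j\in\{0,\dots,i+1\}\setminus\{k\}$, with the conventions $\mathbf{x}_{-1}=0$ and $\mathbf{x}_{i+2}=\mathbb{F}_q^n$.

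Extending $\hat{s}_k$ to a simplex in $\pi^{-1}(V)$ then amounts to inserting a subspace $\mathbf{x}$ with $\mathbf{x}_{k-1}\subsetneq\mathbf{x}\subsetneq\mathbf{x}_{k+1}$ whose height profile equals $V_k$. I would show that such subspaces are in bijection with tuples $(v_j)_{j\in V_k\setminus V_{k-1}}$ of vectors in $\mathbf{x}_{k+1}$ of the form
$$v_j = e_j + \sum_{\substack{j'\in V_{k+1}\setminus V_k \\ j'<j}} \alpha_{j,j'}\,e_{j'}, \qquad \alpha_{j,j'}\in\mathbb{F}_q,$$
via $\mathbf{x} = \mathbf{x}_{k-1} + \mathrm{span}\{v_j : j\in V_k\setminus V_{k-1}\}$. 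Counting these tuples immediately yields the claimed product formula.

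To establish the bijection I would proceed in two directions. For surjectivity, start from any basis of $\mathbf{x}$ extending $\{e_t : t\in V_{k-1}\}$ and apply the normalization algorithm preceding Proposition \ref{prop:UniqueHeights} to produce basis vectors $v_j$ with pairwise distinct heights forming the set $V_k\setminus V_{k-1}$; then, processing the $v_j$'s in decreasing order of height, subtract multiples of the $e_t$ (for $t\in V_{k-1}$) and of the other $v_{j'}$ to clear every coefficient at positions in $V_k\setminus\{j\}$, and rescale so that the $e_j$-coefficient equals $1$. The height condition forces all coefficients above $j$ to vanish, leaving only free parameters at $j'\in V_{k+1}\setminus V_k$ with $j'<j$. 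For injectivity, any nonzero difference of two such representations would produce a vector of $\mathbf{x}$ with height in $V_{k+1}\setminus V_k$, contradicting $profile(\mathbf{x})=V_k$ by Proposition \ref{prop:UniqueHeights}.

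The main technical obstacle is the orderly clearing in the surjectivity step: one has to verify that eliminating the $e_{j'}$-coefficient in $v_j$ (for another pivot $j'\in V_k\setminus V_{k-1}$) by subtracting a multiple of $v_{j'}$ does not reintroduce a previously cleared coefficient. The cleanest way to bypass this bookkeeping is to descend to the quotient $\mathbf{x}_{k+1}/\mathbf{x}_{k-1}$, which has basis $\{\bar e_t : t\in V_{k+1}\setminus V_{k-1}\}$, and recast the problem as counting subspaces of dimension $|V_k|-|V_{k-1}|$ whose heights (in the induced ordering) are exactly $V_k\setminus V_{k-1}$. This is a standard reduced-echelon-form count in which the pivot columns are fixed to $V_k\setminus V_{k-1}$ and the free entries sit at non-pivot positions below the pivot, and the product formula drops out directly.
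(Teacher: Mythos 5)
Your proposal is correct and follows essentially the same route as the paper: reduce to the canonical orbit representative, identify the extensions with subspaces of height profile $V_k$ sandwiched between $\mathbf{x}_{k-1}$ and $\mathbf{x}_{k+1}$, pass to the quotient $\mathbf{x}_{k+1}/\mathbf{x}_{k-1}$, and count via the pivot/normal-form parametrization with free entries only at positions $j'\in V_{k+1}\setminus V_k$ below the pivot $j$. The paper's internal Claim is exactly your reduced-echelon-form count, including the same uniqueness-of-normal-form argument for injectivity.
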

    \begin{proof}
        Let $\hat{s}_k=\{x_0,\dots, x_{k-1},x_{k+1}, \dots, x_i\}\in \hat{V}_k$ then the number of simplices containing $t$ in $\pi^{-1}(V)$ is equal to the number of distinct subspaces of height profile $V_k$ sandwiched between $\mathbf{x}_{k-1}$ and $\mathbf{x}_{k+1}$.\\
        Since both $\pi^{-1}(V)$ and $\pi^{-1}(\hat{V}_k)$ are $H$-orbits we may assume w.l.o.g. that $t$ is our favorite simplex in $\pi^{-1}(\hat{V}_k)$ namely that $\mathbf{x}_l=span(\{e_j|j\in V_l\})$ for $l\neq k$.\\
        Before proving the proposition in general consider the example of $V_{k-1}=\{1,3\}$,$V_k=\{1,3,6\}$, $V_{k+1}=\{1,3,4,6,9\}$: A sandwiched space $\mathbf{x}_k$ of height profile $\{1,3,6\}$ corresponds to a one dimensional subspace in the quotient $\mathbf{x}_{k+1}/\mathbf{x}_{k-1}$. Choosing as a natural basis for $\mathbf{x}_{k+1}/\mathbf{x}_{k-1}$ the basis $e_1'=[e_4]$, $e_2'=[e_6]$ and $e_3'=[e_9]$ the line $[\mathbf{x}_k]$ must be spanned by a vector of the form $\alpha_1 e_1'+\alpha_2 e_2'$ with $\alpha_2\neq 0$ i.e. a vector of height 2.  In $\mathbb{F}_q^n$ there are $q$ lines spanned by a vector of height 2, since w.l.o.g. one may set $\alpha_2=1$ and choose $\alpha_1\in \mathbb{F}_q$ for which there are $q$-choices.\\
        In the general case one also passes to the quotient while carefully keeping track of the heights.  For $\mathbf{x}_{k-1}\subset \mathbf{x}_k\subset \mathbf{x}_{k+1}$ with respective height profiles $V_{k-1}\subset V_k\subset V_{k+1}$, enumerate the indices in $V_{k+1}\backslash V_{k-1}$ by $m_j$ in an ascending manner. Fix as basis for $\mathbf{x}_{k+1}/\mathbf{x}_{k-1}$ the set $\{e_j'=[e_{m_j}]|1\leq j\leq |V_{k+1}|-|V_{k-1}|\} $. Note that for $m_j\in V_k\backslash V_{k-1}$ a vector $v\in V_k$ of height $m_j$ its image in the quotient $[v]$ has height $j$. Thus, each sandwiched space $\mathbf{x}_k$ of height profile $V_k$  corresponds to a subspace $[\mathbf{x}_k]\subset \mathbf{x}_{k+1}/\mathbf{x}_{k-1}$ of height profile $\tilde{V}_k=\{j|m_j\in V_k\backslash V_{k-1}\}$ and vice versa. So in order to count the number of sandwiched spaces one may equivalently count the number of subspaces of $\mathbb{F}_q^{|V_{k+1}\backslash V_{k-1}|}$ of height profile $\tilde{V}_k$. 
        \begin{claim}
            In the vector space $\mathbb{F}_q^n$, and height profile $W$ there are
            \begin{equation}
                \prod_{j\in W}q^{\{j'\in [n]\backslash W| j'<j\}}
                \label{eq:NoBoringSpaces}
            \end{equation}
            many subspaces with height profile $V$. 
        \end{claim}
        \begin{proof}
            To span a vector space of height profile $W$ one needs to choose for each $j\in W$ a vector $v_j$ of height $j$. To avoid double counting spaces one imposes two conditions:
            \begin{enumerate}
                \item the $j$-th entry of $v_j$ is $1$
                \item for $j'\in V\backslash\{j\}$ the $j'$-th entry is $0$
            \end{enumerate}
            Note that there are $q^{\{j'\in [n]\backslash V| j'<j\}}$-many options to choose $v_j$. To show that for each $j$ the choices are independent one needs to show that each $\mathbf{x}\subset \mathbb{F}_q^n$ contains a suitable $v_j$ and it is unique. Existence can be seen as follows: Choose basis vectors $v_j'\in \mathbf{x}$ such that the height of $v_j'$ is $j$. To produce $v_j$ for $j'$ use $v_{j'}'$ to set the $j'$-th entry zero for descending $j'$. Further, $v_j$ must be unique since the difference of two vectors of same height satisfying the above conditions is supported on entries outside $W$ i.e. cannot be in $\mathbf{x}$. Altogether, this shows that each different choice for the $v_j$ leads to a different space $\mathbf{x}$ and each $\mathbf{x}$ is associated to a basis of vectors satisfying the above conditions. Since one has $q^{\{j'\in [n]\backslash V| j'<j\}}$-options for $v_j$, the number of spaces $\mathbf{x}$ is given as in Expression \ref{eq:NoBoringSpaces}.
        \end{proof}
        Using the way we set up the height function in the quotient, the claim shows, when tracing the indices appropriately, that the number of sandwiched spaces of height profile $V_k$ is given as in Expression \ref{eq:NoNeighbors}.
    \end{proof}
    As a corollary one obtains a formula for the number of neighbors a simplex $s\in X^i$ has in a certain $H$-orbit $\pi^{-1}(\hat{V}_l)$ for $\hat{V}_l\in X^i/H$:
    \begin{corollary}
        Let $V$ be the height profile $\emptyset \neq V_0\subset \dots \subset V_{i+1}\neq [n]$. Let $k\neq l$ and let $\hat{V}_k$ and $\hat{V}_l$ be the height profile of $V$ removing $V_k$ and $V_l$ respectively. If $s\in X^i$ is in $H$-orbit $\pi^{-1}(V_k)$ then $s$ has
         \begin{equation}
            \prod_{j\in V_k\backslash V_{k-1}}q^{\{j'\in V_{k+1}\backslash V_k|j'<j\}}
            \label{eq:NoNeighbors2}
        \end{equation}
        many neighbors in $H$-orbit $\pi^{-1}(\hat{V}_l)$.
    \end{corollary}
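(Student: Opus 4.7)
My plan is to prove the corollary by exhibiting a bijection between the set $A$ of $(i+1)$-simplices $s \in \pi^{-1}(V)$ containing a given $i$-simplex $t \in \pi^{-1}(\hat{V}_k)$, and the set $B$ of neighbors of $t$ in $X^i$ that lie in $\pi^{-1}(\hat{V}_l)$. Once this bijection is established, the preceding proposition immediately gives the count in Formula \ref{eq:NoNeighbors2}, since it computes $|A|$ and the formula does not depend on $l$.

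The forward map sends $s \in A$ to the $i$-face obtained by deleting the $l$-th vertex of $s$ in the color ordering. Since $s$ has profile $V$, this face has profile $\hat{V}_l$ by definition, and it shares exactly $i$ vertices with $t$ — namely, all vertices of $s$ except the $k$-th and $l$-th. Hence the image lies in $B$. The reverse map sends $u \in B$ to $t \cup u$; the condition $|t \cap u| = i$ makes $t \cup u$ an $(i+1)$-simplex, and the two constructions are visibly inverse at the vertex-set level.

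The step that genuinely requires verification — and is the main (though modest) obstacle — is to show that $t \cup u \in \pi^{-1}(V)$, i.e.\ that the height profile of $t \cup u$ equals $V$. Here one uses that the $i$ vertices in $t \cap u$ must correspond to the $i$ subspaces shared between the profiles $\hat{V}_k$ and $\hat{V}_l$, which are precisely the members of $V$ distinct from $V_k$ and $V_l$. The vertex of $t$ not in $u$ must then contribute $V_l$, and the vertex of $u$ not in $t$ must contribute $V_k$, so the profile of $t \cup u$ is the full flag $V$. The hypothesis $k \neq l$ is essential here, as otherwise the two missing indices collide and the reconstruction fails. With the bijection in hand, $|B| = |A|$, and the preceding proposition delivers the claimed formula.
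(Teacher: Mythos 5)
Your bijection $s \leftrightarrow (\hat{s}_k, \hat{s}_l)$ is exactly the argument the paper uses: the corollary is stated without a separate proof because the discussion immediately preceding the proposition already records that passing from $\pi^{-1}(\hat{V}_k)$ to $\pi^{-1}(\hat{V}_l)$ forces passage through a unique simplex of $\pi^{-1}(V)$, and distinct such simplices yield distinct $l$-faces. Your verification that $t \cup u$ has profile $V$ (via matching the $i$ shared profiles and the two complementary ones, using $k \neq l$) correctly fills in the one step the paper leaves implicit, so the proposal is correct and essentially identical in approach.
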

    This last corollary allows for determining the edge weights of $X^i/H$ up to a sign:
    \begin{proposition}
            As before let $V$ be the height profile $\emptyset=V_{-1}\subset V_0\subset\dots\subset V_i\subset V_{i+1}=[n]$  and for $k\neq l$ let $\hat{V}_k$ and $\hat{V}_l$ be $V$ with $V_k$ and $V_l$ respectively removed. Then 
            \begin{equation}
                |\tilde{\Delta}_i^+(\hat{V}_k,\hat{V}_l)|=\frac{\prod_{j\in V_k\backslash V_{k-1}}q^{\{j'\in V_{k+1}\backslash V_k|j'<j\}}}{\gBinom{|V_{k+1}\backslash V_{k-1}|}{|V_{k}\backslash V_{k-1}|}_q}
                \label{eq:AbsoluteOffDiagonal}
            \end{equation}
            and on the diagonal one has 
            \begin{equation}
                \tilde{\Delta}_i^+(V_k,V_k)=n-i
                \label{eq:DiagonalEntries}
            \end{equation}
    \end{proposition}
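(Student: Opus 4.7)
The plan is to unfold Definition \ref{def:QuotientWeights} for the quotient weights in $X^i/H$ and assemble each entry from three ingredients already in hand: the signed per-edge weight in $X^i$ from Corollary \ref{cor:EdgeWeight}, the orbit-to-orbit neighbor count from Expression \ref{eq:NoNeighbors2}, and the observation that all edges from a fixed simplex in orbit $\hat V_k$ into orbit $\hat V_l$ carry the same sign, so that sign factors cleanly out of the quotient sum.

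For the off-diagonal entry, I would fix a representative $x\in\pi^{-1}(\hat V_k)$. By Definition \ref{def:QuotientWeights},
\[
\tilde{\Delta}_i^+(\hat V_k,\hat V_l)=\sum_{z\in\pi^{-1}(\hat V_l),\ z\sim x}\Delta_i^+(x,z).
\]
Each summand is indexed by the unique $(i+1)$-simplex $s=x\cup z$, which must lie in $\pi^{-1}(V)$ because the height-profile map respects joins of flags. Any simplex in $\pi^{-1}(V)$ has color sequence $(|V_0|,\dots,|V_{i+1}|)$, so Corollary \ref{cor:EdgeWeight} pins the magnitude of each summand to $1/\gBinom{|V_{k+1}\setminus V_{k-1}|}{|V_k\setminus V_{k-1}|}_q$, using $|V_{j+1}|-|V_{j-1}|=|V_{j+1}\setminus V_{j-1}|$ from nesting. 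The sign is uniformly $(-1)^{k+l}$, since $k$ and $l$ are the positions of the removed vertex in the color sequence of $s$ and these positions are fixed by the orbit labels alone. The number of summands is the neighbor count of Expression \ref{eq:NoNeighbors2}. Multiplying the count by the per-edge magnitude yields Equation \ref{eq:AbsoluteOffDiagonal}.

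For the diagonal, I would argue that within the orbit only the self-loop $z=x$ contributes. Suppose $z\neq x$ were in orbit $\hat V_k$ and adjacent to $x$ in $X^i$; then $x$ and $z$ would differ in exactly one vertex position. Since an $H$-orbit fixes the color sequence, the two distinct differing vertices would have equal dimension, so the $(i+1)$-simplex $x\cup z$ would contain two distinct vertices of the same color. This contradicts the proper coloring of the spherical building by subspace dimension. Hence $\tilde{\Delta}_i^+(\hat V_k,\hat V_k)=\Delta_i^+(x,x)$, and this self-loop value equals $n-i$ by the up-down degree identity recalled at the close of the preliminaries.

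The only real delicacy is the sign invariance: one must verify that $(-1)^{k+l}$ is genuinely constant across all $z\in\pi^{-1}(\hat V_l)$ adjacent to $x$, i.e.\ that the removed-vertex positions in the parent $(i+1)$-simplex are always $k$ (for $x$) and $l$ (for $z$) regardless of the representative. This follows because those positions coincide with the ranks of $|V_k|$ and $|V_l|$ within the color sequence of $V$, which are fixed by the combinatorial data of the orbits $\hat V_k$, $\hat V_l$, and $V$. With the sign settled, the proof is a mechanical assembly of the earlier counting and weight results.
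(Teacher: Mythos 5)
Your proof is correct and takes essentially the same approach as the paper: the off-diagonal entry is assembled from the per-edge magnitude of Corollary \ref{cor:EdgeWeight}, the orbit neighbor count of Expression \ref{eq:NoNeighbors2}, and the constancy of the sign $(-1)^{k+l}$, exactly as in the text. Your explicit check that two distinct simplices in the same $H$-orbit cannot be adjacent (since their union would contain two vertices of the same color) is a detail the paper leaves implicit when reducing the diagonal entry to the self-loop, but the conclusion and the appeal to the identity $\sum_{t\supset s}w(t)=(n-i)w(s)$ match the paper's argument.
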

    \label{prop:AbsoluteEntries}
    \begin{proof}
        The sign of $\Delta_i^+(s,t)$ for $s\in\pi^{-1} (V_k)$ and $t\in\pi^{-1}(V_l)$ only depends on $k$ and $l$. And $\Delta_i^+(s,t)=\pm \frac{1}{\gBinom{|V_{k+1}\backslash V_{k-1}|}{|V_{k}\backslash V_{k-1}|}_q}$ which explains Equation \ref{eq:AbsoluteOffDiagonal}.\\
        For diagonal elements consider the following: for $s=\{v_0,\dots,v_i\}$ with the vertices sorted according to color one has $sign(\{v_k\}\cup\{v_0,\dots,\hat{v}_k,\dots, v_i\})=(-1)^k$. This in particular implies that the signs for each loop coming from the boundary map and the co-boundary map cancel i.e. all loops contribute positively to the corresponding loop in $X^i/H$. Then the following claim proves Equation \ref{eq:DiagonalEntries}:
        \begin{claim}
            Let $s\in X^i$ be any $i$-dimensional simplex. Then holds:
            \begin{equation}
                \sum_{s\subset t\in  X(i+1)}w(t)=(n-i)w(s)
            \end{equation}
        \end{claim}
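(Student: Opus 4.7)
The plan is to prove the claim by a standard double-counting argument on pairs of simplices. Specifically, I would count the set
\begin{equation*}
P = \{(t,u) \mid s \subset t \subset u,\ t \in X(i+1),\ u \in X(n)\}
\end{equation*}
in two different ways.

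First I would fix $t$ and vary $u$. For each $(i+1)$-simplex $t$ with $s \subset t$, the number of top-dimensional simplices $u \in X(n)$ containing $t$ is by definition $w(t)$. Summing over $t$ gives $|P| = \sum_{s \subset t \in X(i+1)} w(t)$, which is exactly the left-hand side of the claim.

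Next I would fix $u$ and vary $t$. For each top-dimensional simplex $u$ containing $s$ (there are $w(s)$ such $u$ by the definition of $w$), I count the $(i+1)$-simplices $t$ with $s \subset t \subset u$. Such a $t$ is obtained from $s$ by adjoining one additional vertex chosen from the $(n+1)-(i+1) = n-i$ vertices of $u \setminus s$, and since $X$ is a simplicial complex every such choice yields a valid $t \in X(i+1)$. So each $u$ contributes exactly $n-i$ pairs, giving $|P| = (n-i)w(s)$.

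Equating the two expressions for $|P|$ yields the claim. There is no real obstacle here: the only thing one must be careful about is using purity of $X$ (which is in force throughout the paper) to guarantee that every $t \supset s$ is contained in at least one top-dimensional $u$, so that the first count matches the second. Once the claim is in hand, combined with the sign analysis in the preceding paragraph (each loop at $s$ contributes with positive sign because the boundary and coboundary signs cancel), it follows that $\tilde{\Delta}_i^+(V_k,V_k) = \sum_{t \supset s} w(t)/w(s) = n-i$, completing the proof of Equation \ref{eq:DiagonalEntries}.
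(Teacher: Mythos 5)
Your double-counting of the pairs $(t,u)$ with $s \subset t \subset u$ is exactly the argument the paper gives: it rewrites $\sum_{s\subset t} w(t)$ as a sum over top-dimensional $u \supset s$ of the number of intermediate $(i+1)$-simplices, which is $n-i$ per $u$. The proof is correct and matches the paper's approach.
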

        \begin{proof}
            Since $w(s)=|\{u\in X(n)|s\subset n\}|$ and $X$ is a pure complex one has
            \begin{equation}
                \sum_{s\subset t\in  X(i+1)}w(t)=\sum_{s\subset u\in X(n)}|\{t\in X(i+1)|s\subset t\subset u\}|=\sum_{s\subset u\in X(n)}(n-i)=(n-i)w(s)
            \end{equation}
        \end{proof}
    \end{proof}
    Note that for a $i+1$-flag $V$ the  sign of $sign(\tilde{\Delta}_i^+(\hat{V}_k,\hat{V}_l))=sign(\tilde{\Delta}_i^+(\hat{V}_l,\hat{V}_k))=(-1)^{k+l}$.
    \begin{corollary}
        Let $\mu$ be as in Equation \ref{eq:SymmetricCurvature} and height profiles $V_k$ and $V_l$ as in Proposition \ref{prop:AbsoluteEntries}. Then one has:
        \begin{equation}
            \left(\mu^{-1}\tilde{\Delta}_i^+\mu\right)(\hat{V}_k,\hat{V}_l)=(-1)^{k+l}\sqrt{\frac{\prod_{j\in V_k\backslash V_{k-1}}q^{\{j'\in V_{k+1}\backslash V_k|j'<j\}}}{\gBinom{|V_{k+1}\backslash V_{k-1}|}{|V_{k}\backslash V_{k-1}|}_q} \frac{\prod_{j\in V_l\backslash V_{l-1}}q^{\{j'\in V_{l+1}\backslash V_l|j'<j\}}}{\gBinom{|V_{l+1}\backslash V_{l-1}|}{|V_{l}\backslash V_{l-1}|}_q}}
        \end{equation}
        And
        \begin{equation}
            \left(\mu^{-1}\tilde{\Delta}_i^+\mu\right)(\hat{V}_k,\hat{V}_k)=n-i
        \end{equation}
    \end{corollary}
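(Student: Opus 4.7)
The statement to prove is essentially a direct substitution: the symmetrization formula from Equation \ref{eq:SymmetricCurvature} tells us how to compute $(\mu^{-1}\tilde{\Delta}_i^+\mu)(\hat{V}_k,\hat{V}_l)$ in terms of $\tilde{\Delta}_i^+(\hat{V}_k,\hat{V}_l)$ and $\tilde{\Delta}_i^+(\hat{V}_l,\hat{V}_k)$, and Proposition \ref{prop:AbsoluteEntries} has just given us the absolute values of these quantities together with the remark immediately preceding the corollary that both have sign $(-1)^{k+l}$. So the whole proof is really a matter of plugging in.

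Concretely, the plan is as follows. First I would handle the off-diagonal case: starting from Equation \ref{eq:SymmetricCurvature}, I substitute $sign(\tilde{\Delta}_i^+(\hat{V}_k,\hat{V}_l))=(-1)^{k+l}$, which makes this factor the overall sign of the resulting expression. Under the square root I replace each of $\tilde{\Delta}_i^+(\hat{V}_k,\hat{V}_l)$ and $\tilde{\Delta}_i^+(\hat{V}_l,\hat{V}_k)$ by its absolute value as given in Equation \ref{eq:AbsoluteOffDiagonal} (each with the relevant roles of $k$ and $l$). The small subtlety worth flagging is that the product under the radical is a product of two quantities of the same sign $(-1)^{k+l}$, so it is genuinely non-negative and the square root is real; the signs of the two factors cancel inside the radical and the lone sign $(-1)^{k+l}$ survives in front.

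For the diagonal entries I would simply note that $\mu$ is a diagonal matrix, so $\mu^{-1}\tilde{\Delta}_i^+\mu$ has exactly the same diagonal entries as $\tilde{\Delta}_i^+$; Equation \ref{eq:DiagonalEntries} then yields $n-i$ immediately. Alternatively, one may appeal to Equation \ref{eq:SymmetricCurvature} with $k=l$, in which case the sign is $+1$ and the square root reduces to $\tilde{\Delta}_i^+(\hat{V}_k,\hat{V}_k)=n-i$.

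There is really no obstacle here; the only thing to be careful about is bookkeeping of the signs so that the product of two negative quantities under the root does not get mistakenly flipped. Since this corollary is just the combination of Proposition \ref{prop:AbsoluteEntries}, the sign remark following it, and the symmetrization identity \ref{eq:SymmetricCurvature}, the proof can be written in two short displayed equations and a single sentence about the diagonal.
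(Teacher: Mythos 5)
Your proposal is correct and matches the paper's treatment: the corollary is stated without a separate proof precisely because it is the immediate combination of Equation \ref{eq:SymmetricCurvature}, the absolute values from Proposition \ref{prop:AbsoluteEntries}, and the sign remark $(-1)^{k+l}$, exactly as you describe. Your observations about the product under the radical being non-negative and about conjugation by a diagonal matrix fixing the diagonal entries are both accurate and complete the argument.
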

    In order to determine the asymptotics of the eigenvalues one needs to determine the asymptotics of each of these entries:
    \begin{proposition}
        For $q\rightarrow\infty$ the factor $\frac{\prod_{j\in V_k\backslash V_{k-1}}q^{\{j'\in V_{k+1}\backslash V_k|j'<j\}}}{\gBinom{|V_{k+1}\backslash V_{k-1}|}{|V_{k}\backslash V_{k-1}|}_q}\rightarrow 1$ if every $j\in V_k\backslash V_{k-1}$ dominates $V_{k+1}\backslash V_k$ i.e. $j>j'$ for $j'\in V_{k+1}\backslash V_k$ and otherwise $\frac{\prod_{j\in V_k\backslash V_{k-1}}q^{\{j'\in V_{k+1}\backslash V_k|j'<j\}}}{\gBinom{|V_{k+1}\backslash V_{k-1}|}{|V_{k}\backslash V_{k-1}|}_q}\rightarrow 0$
        \label{prop:DegenerateEntries}
    \end{proposition}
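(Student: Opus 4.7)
The plan is to compare the leading $q$-behavior of the numerator and the denominator separately. First, I introduce the abbreviations $a := |V_k \setminus V_{k-1}|$ and $b := |V_{k+1} \setminus V_k|$, so that $a + b = |V_{k+1} \setminus V_{k-1}|$; with this notation the denominator is $\gBinom{a+b}{a}_q$.

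The first step is to observe that $\gBinom{a+b}{a}_q$ is a polynomial in $q$ of degree $ab$ with leading coefficient $1$. This is immediate from the standard product formula $\gBinom{a+b}{a}_q = \prod_{j=1}^{a}(q^{b+j}-1)/(q^{j}-1)$, since each of the $a$ factors equals $q^{b}(1+O(q^{-1}))$ as $q \to \infty$; multiplying out yields $\gBinom{a+b}{a}_q = q^{ab}(1+O(q^{-1}))$.

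The second step is to reinterpret the numerator's exponent combinatorially: the sum $\sum_{j \in V_k \setminus V_{k-1}} |\{j' \in V_{k+1} \setminus V_k : j' < j\}|$ counts ordered pairs $(j,j') \in (V_k \setminus V_{k-1}) \times (V_{k+1} \setminus V_k)$ with $j' < j$. Since the total number of pairs without the inequality is exactly $ab$, this exponent $e$ satisfies $e \le ab$, with equality if and only if every $j \in V_k \setminus V_{k-1}$ exceeds every $j' \in V_{k+1} \setminus V_k$ — precisely the dominance condition in the statement. Otherwise at least one forbidden pair $(j,j')$ with $j < j'$ exists, so strictly $e \le ab - 1$.

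Putting the two steps together, the whole ratio rewrites as $q^{e - ab}(1+O(q^{-1}))$. In the dominance case $e = ab$, giving limit $1$; in every other case $e \le ab - 1$, giving limit $0$. No serious obstacle stands in the way: the argument reduces to an elementary leading-order expansion of the Gaussian binomial together with a one-line counting identity for the numerator's exponent, so the only thing demanding attention is keeping the indexing of $V_{k-1}, V_k, V_{k+1}$ and the inversion count consistent.
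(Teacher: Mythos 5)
Your proof is correct and follows essentially the same route as the paper: both compare the degree of the numerator (the inversion count $e\le ab$, with equality exactly under the domination condition) against the degree $ab$ and leading coefficient $1$ of the Gaussian binomial in the denominator. Your write-up merely makes the counting of pairs $(j,j')$ more explicit than the paper does.
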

    \begin{proof}
        One first notes that $\gBinom{a}{b}_q$ is a polynomial of degree $(a-b)b$ in $q$ with leading coefficient $1$. This means that the denominator is a polynomial in $q$ of order $|V_k\backslash V_{k-1}||V_{k+1}\backslash V_k|$. By construction the degree of the enumerator is bounded by $|V_k\backslash V_{k-1}||V_{k+1}\backslash V_k|$ and the degree is equal $|V_k\backslash V_{k-1}||V_{k+1}\backslash V_k|$ if and only if every $j\in V_k\backslash V_{k-1}$ dominates $V_{k+1}\backslash V_k$. As $q\rightarrow\infty$ this proves the proposition.
    \end{proof}
    We will refer to the condition for the fraction above to converge to $1$ as the \textbf{domination condition}. Since any entry in $\mu^{-1}\tilde{\Delta}_i^+\mu$ is the product of two such fractions we have that $\mu^{-1}\tilde{\Delta}_i^+\mu(\hat{V}_k, \hat{V}_l)$ does not vanish in the limit only if both $\hat{V}_k$ and $\hat{V}_l$ satisfy the domination condition. 
    \subsection{The Block Decomposition}
    \label{sec:TheBlock}
    From now on let $D_i$ denote the limit of $\mu^{-1}\tilde{\Delta}_i^+\mu$.
    One should think of $D_i$ as the adjacency matrix of a graph $Y^i$ with the vertices corresponding to all the $i$-flags of subsets of $[n]$ and the edges being $\{(x,y)|D_i(x,y)=\pm1\}$. Towards calculating the spectrum of $D_i$ it is useful to first determine the connected components of $Y^i$. When the indicators of vertices of connected component are bunched together, $D_i$ becomes block-diagonal and each  block corresponds to a component.\\
    
     As a warm up consider the following: Let $V$  be a flag $V_0\subset\dots\subset V_i\subset V_{i+1}=[n]$ such that for $j\neq k$ the set $V_j$ is an interval ending with $n$ i.e. $V_j=[n]\backslash[m_j]$ for some $m_j<n$. Now, let $\hat{V}_k\subset V$ be be the sub-flag of $V$ missing $V_k$. For $D_i(\hat{V}_k, \hat{V}_l)$ to not be $0$ one needs according to Proposition \ref{prop:DegenerateEntries} that every $j\in V_k\backslash V_{k-1}$ dominates $V_{k+1}\backslash V_k$. This implies that $V_k=[n]\backslash[m_k]$ for some $m_{k+1}<m_k<m_{k-1}$. Therefore, $V$ is a flag of intervals ending at $n$. In this case for any $l$ the domination condition $\forall j\in V_l\backslash V_{l-1},j'\in V_{l+1}\backslash V_l\implies j>j'$ holds. This means that if $V$ is a flag of intervals ending with $n$ then $D_i(\hat{V}_k,\hat{V}_l)=\pm1$\\
    As a corollary of the preceding discussion one has:
    \begin{proposition}
        The flags consisting of intervals starting with $n$ form a connected component of $Y^i$.
        \label{prop:EmptyCharacter}
    \end{proposition}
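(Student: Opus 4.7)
The plan is to establish the two defining properties of a connected component: closedness under adjacency in $Y^i$, and connectivity of the induced subgraph. Throughout I read ``interval starting with $n$'' as in the preceding warm-up, i.e., a set of the form $[n]\setminus[m]=\{m+1,\dots,n\}$ for some $m<n$.

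For closedness, I would take an $i$-flag $W=\hat{V}_k$ lying in the claimed set together with a neighbor $\hat{V}_l$ realized by a common $(i+1)$-flag $V=V_0\subset\dots\subset V_{i+1}$. Since $\hat{V}_l$ agrees with $W$ in all positions except $l$ and already contains every $V_j$ with $j\neq k,l$ (all of which are intervals ending at $n$ by hypothesis on $W$), it suffices to show $V_k$ itself has the required form. To this end I would invoke Proposition \ref{prop:DegenerateEntries}: non-vanishing of $D_i(\hat{V}_k,\hat{V}_l)$ forces the domination condition at position $k$. Writing $V_{k-1}=\{a,\dots,n\}$ and $V_{k+1}=\{b,\dots,n\}$ with $b<a$ (extending by $V_{-1}=\emptyset$ and $V_{i+1}=[n]$ at the endpoints if needed), the set $V_{k+1}\setminus V_{k-1}$ equals the interval $\{b,\dots,a-1\}$. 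The domination condition says $V_k\setminus V_{k-1}$ must consist of the top indices of this interval, so $V_k\setminus V_{k-1}=\{c,\dots,a-1\}$ for some $b\leq c\leq a$, and hence $V_k=\{c,\dots,n\}$.

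For connectivity, I would identify each $i$-flag of intervals ending at $n$ with the $(i+1)$-element subset $\{m_0,\dots,m_i\}\subset\{1,\dots,n-1\}$ of left endpoints, and $(i+1)$-flags of such intervals with $(i+2)$-element subsets of $\{1,\dots,n-1\}$. The warm-up already observed that for any $(i+1)$-flag $V$ built from intervals ending at $n$, the domination condition holds at every position, so the entire clique $\{\hat{V}_k\}_{k=0}^{i+1}$ sits inside $Y^i$. The induced graph on our set is therefore exactly the up-down graph at level $i$ of the complete $(n-2)$-dimensional complex on vertex set $\{1,\dots,n-1\}$, whose connectivity is immediate: any two $(i+1)$-subsets of $\{1,\dots,n-1\}$ can be interconverted by a sequence of single-element exchanges, each realized by a common $(i+2)$-supersubset.

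Combining the two parts gives the proposition. The step requiring the most care is closedness, whose content is the structural claim that the domination condition at the single position $k$ suffices to propagate the ``ends at $n$'' structure from the two flag-neighbors $V_{k-1},V_{k+1}$ to the sandwiched $V_k$. Once this is in hand, the connectivity half is routine via the bijection with the complete complex, and the same bijection will also feed directly into the identification exploited in Proposition \ref{prop:BlockSimplex}.
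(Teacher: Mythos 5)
Your proof is correct and follows essentially the same route as the paper: closedness under adjacency via the domination condition at position $k$ forcing the sandwiched $V_k$ to be an interval ending at $n$ (this is the paper's preceding warm-up discussion, which you spell out more explicitly), and connectivity via add-and-delete moves on the left endpoints, i.e.\ single-element exchanges in the complete complex.
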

    \begin{proof}
        The discussion shows that from a flag of intervals ending with $n$ one can only transition to another flag of intervals ending with $n$. On the other hand the transition $D_i(\hat{V}_k,\hat{V}_l)$ corresponds to adding an arbitrary interval $[n]\backslash[m_k]$ where $m_{k-1}>m_k>m_{k+1}$ and then erasing the $l$-th interval. In that way in order to get from $m=(m_0>\dots>m_i)$ to $m'=(m_0'>\dots>m_i')$ one may successively add the $m_j'$ which are not in the flag $m$ and delete those $m_j$ which are not in $m'$ i.e. $D_i$ is transitive on the flags of intervals. 
    \end{proof}   
    To determine all connected components of $Y^i$ observe the following: Let $V$ be a flag $V_0\subset\dots\subset V_i\subset V_{i+1}=[n]$. For $V$ we call the sub-flag $\mathbf{V}=(V_{k_0}\subset\dots\subset V_{k_\alpha})$ the \textbf{characteristic} sub-flag which consists of all  $V_k\in V$ such that $\exists j\in V_k\backslash V_{k-1}\exists j'\in V_{k+1}\backslash V_k$ with $j<j'$ meaning that for the pair $V_k,V_{k+1}$ the domination condition is not satisfied. For flags consisting of intervals starting with $n$ the characteristic flag is empty. 
    \begin{proposition}
        The space of characteristic flags indexes the connected components of $Y_i$ i.e. each set of vertices of $Y_i$ with the same characteristic flag forms a connected component.
        \label{prop:BlockIntervals}
    \end{proposition}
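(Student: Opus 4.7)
The plan is to prove the proposition in two steps: first, that edges of $Y^i$ preserve the characteristic flag, so every fiber of $\chi$ is a union of connected components, and second, that each fiber is itself connected.

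For the first step, I would fix an edge $(\hat V_k,\hat V_l)$ coming from an $(i+1)$-flag $V$. By Proposition \ref{prop:DegenerateEntries} together with Equation \ref{eq:SymmetricCurvature}, the limiting weight of this edge is nonzero precisely when both $V_k$ and $V_l$ are non-characteristic in $V$. I would then trace the gap structure of $\hat V_k$: all gaps at positions far from $k$ are inherited from $V$, while the two gaps $V_k\setminus V_{k-1}$ and $V_{k+1}\setminus V_k$ are merged into the single gap $V_{k+1}\setminus V_{k-1}$. Since $V_k$ is non-characteristic, the first of these gaps dominates the second, so the merged gap has minimum equal to that of $V_{k+1}\setminus V_k$ and maximum equal to that of $V_k\setminus V_{k-1}$. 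Consequently the domination conditions at the two new adjacent positions of $\hat V_k$ reduce exactly to the conditions at $V_{k-1}$ and $V_{k+1}$ in $V$. This identifies $\chi(\hat V_k)$, viewed as a family of subsets of $[n]$, with $\chi(V)$, and the identical computation at $l$ gives $\chi(\hat V_l)=\chi(V)$ as well.

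For the second step, I would fix $C=C_1\subset\dots\subset C_\alpha$ and any $U\in\chi^{-1}(C)$. Setting $C_0=\emptyset$ and $C_{\alpha+1}=[n]$, the non-characteristic entries of $U$ partition into $\alpha+1$ blocks $B_0,\dots,B_\alpha$, where $B_j$ is the chain of entries strictly between $C_j$ and $C_{j+1}$. Within $B_j$ the gaps telescope and each dominates the next, so restricting attention to $B_j$ reproduces the flag-of-intervals setup of Proposition \ref{prop:EmptyCharacter} applied to the ambient set $C_{j+1}\setminus C_j$. My plan is to establish two statements: that for a fixed profile of block sizes, any two configurations of a single block can be connected by edges that insert and delete strictly inside that block (a direct transcription of the interval-swap argument in Proposition \ref{prop:EmptyCharacter}); and that one can transfer a single non-characteristic entry from one block to an adjacent block by an edge whose inserted vertex lies in one block and whose deleted vertex lies in the other. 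Chaining these moves, any $U\in\chi^{-1}(C)$ can be driven to a canonical representative, for instance the one in which all non-characteristic entries are pushed into a fixed admissible block and sorted in descending order, and hence to every other element of the fiber.

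The hard part will be the cross-block transfer, and more generally any insertion that lands immediately next to a characteristic element $C_j$. Such an insertion can in principle turn $C_j$ from characteristic into non-characteristic in the intermediate $(i+1)$-flag $V$, which would invalidate the edge. I would handle this by tracking, for each $C_j$, an explicit witness pair $a\in V_{k_j}\setminus V_{k_j-1}$ and $b\in V_{k_j+1}\setminus V_{k_j}$ with $a<b$, and choosing the inserted subset so that $b$ lands on the side of the split gap on which it continues to witness the characteristic condition at $C_j$. Together with the observation that the inserted vertex is automatically non-characteristic whenever its new piece of the split gap consists of the top elements of the piece being split, this witness bookkeeping should make every step of the scheme go through.
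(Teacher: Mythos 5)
Your proposal follows essentially the same route as the paper's proof: your first step is the paper's argument (the merged gap $V_{k+1}\setminus V_{k-1}$ inherits its maximum from $V_k\setminus V_{k-1}$ and its minimum from $V_{k+1}\setminus V_k$, so the domination conditions at the two neighbouring positions are unchanged), and your second step is the same insert-and-delete connectivity argument, resting on the fact that every non-characteristic member of a flag with characteristic flag $\mathbf{V}$ is forced to have the form $\mathbf{V}_j\cup(\mathbf{V}_{j+1}\cap([n]\setminus[m]))$. Two remarks. First, the ``hard part'' you single out is not actually there: by Proposition \ref{prop:DegenerateEntries} an edge of $Y_i$ survives in the limit if and only if the inserted and the deleted members each satisfy the domination condition inside the intermediate $(i+1)$-flag $V$; the status of the other members of $V$, in particular of $C_j$, plays no role in the validity of the edge. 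Moreover your first step already shows that every valid edge preserves the characteristic flag of the endpoints, so no witness bookkeeping is required --- one only has to check that each inserted set is a top segment of the gap it splits and that each deleted set is non-characteristic, both of which you already observe. Second, your proposed canonical representative need not exist: if all gaps $C_{j+1}\setminus C_j$ are small (for instance all singletons), no single block can absorb all $i+1-|\mathbf{V}|$ non-characteristic entries. It is safer to connect two given flags of the fiber directly, inserting the members of one and deleting the members of the other as in your within-block argument; with that cosmetic repair the plan goes through.
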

    \begin{proof}
        We need to show two things, on one hand that for two neighbors in $Y_i$ the characteristic flag is the same and on the other hand that all flags with the same characteristic flag lie in the same connected component.\\ 
        Let $V$ be an $i+1$ flag and $k,l$ such that $D_i(\hat{V}_k,\hat{V}_l)=\pm 1$. Let $\mathbf{V}$ be the characteristic flag of $V$ and $\hat{\mathbf{V}}_k$ the characteristic flag of  $\hat{V}_k$. $D_i(\hat{V}_k,\hat{V}_l)=\pm 1$ so the elements in $V_k\backslash V_{k-1}$ dominate those in $V_{k+1}\backslash V_k$. This shows that $V_k$ is not in $\mathbf{V}$. Let $m$ be the smallest element in $V_{k-1}\backslash V_{k-2}$ and $m'$ be the largest element in $V_{k+1}\backslash V_{k-1}$. Now $m'$ must also be in $V_k\backslash V_{k-1}$. If $m>m'$ then $V_k\backslash V_{k-1}$ dominates $V_{k-1}\backslash V_{k-2}$ and equivalently $V_{k+1}\backslash V_{k-1}$ dominates $V_{k-1}\backslash V_{k-2}$, i.e. $V_{k-1}\in \hat{\mathbf{V}}_k\Leftrightarrow V_{k-1}\in \mathbf{V}$. Similarly one also shows that $V_{k+1}\in \hat{\mathbf{V}}_k\Leftrightarrow V_{k+1}\in \mathbf{V}$. In conclusion one has $\hat{\mathbf{V}}_k=\mathbf{V}$. Running the same argument replacing $k$ by $l$  also   $\mathbf{V}=\hat{\mathbf{V}}_l$ holds. As a consequence $D_i$ preserves the characteristic flag.\\
        Given a characteristic flag $\mathbf{V}$ transitivity of $D_i$ on the set of flags with characteristic flag $\mathbf{V}$ the proof works similarly to the the interval case: for $V_k\in V$ let  $\mathbf{V}_j$ be the largest member of the characteristic flag of $V$ such that $\mathbf{V}_j\subset V_k$ then one can write $V_k=\mathbf{V}_j\cup(\mathbf{V}_{j+1}\cap ([n]\backslash[m_k])$ for an appropriate $m_k$. To walk between two flags with the same characteristic subflag one may add and delete the sets of the form $V_k=\mathbf{V}_j\cup(\mathbf{V}_{j+1}\cap ([n]\backslash[m_k])$ until the two flags match i.e. for two flags $V_1$ and $V_2$ with characteristic flag $\mathbf{V}$ there is a path in $Y_i$ connecting the two.
    \end{proof}
    In order to determine the spectrum of $D_i$ one needs to determine the spectrum of $D_i$ restricted to each connected component. To get a clear picture of what $D_i$ restricted to each block looks like, one may again first consider the block corresponding to the trivial characteristic flag:\\
    Here each vertex is characterized by a sequence of integers: $\hat{V}_k=n\geq m_0>\dots>\hat{m}_k>\dots>m_{i+1}>1$. Note that $m_{i+1}>1$ since $span(\{e_i|n\geq i\geq 1\}=\mathbb{F}_q^n$ is the trivial subspace i.e. not a vertex of the of the building. Further the sign of $D_i(\hat{V}_k,\hat{V}_l)=(-1)^{k+l}$. Putting these two observations together, then one notes that $D_i$ restricted to the block of vertices with trivial characteristic flag is a model for the signed up-down walk of level $i$ on the $n-2$-dimensional complete complex. A similar statement also holds for any other connected component of $Y^i$.
    \begin{proposition}
        Let $\mathbf{V}=(\mathbf{V}_1\subset\dots\subset\mathbf{V}_k)$ be a characteristic flag. Then the restriction of $D_i$ to the connected component of $Y_i$ associated to $\mathbf{V}$ is equivalent to the signed-up down walk of level $i-k$ on a $n-2-k$-dimensional complete complex.
        \label{prop:BlockSimplex}
    \end{proposition}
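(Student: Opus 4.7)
My strategy is to exhibit an explicit bijection between the vertices of the block and the simplices of a complete complex on $n-1-k$ vertices, and then match the walk structure up to a diagonal $\pm1$ similarity, which preserves the spectrum. Set $\mathbf{V}_0=\emptyset$ and $\mathbf{V}_{k+1}=[n]$, and write $B_j=\mathbf{V}_j\setminus\mathbf{V}_{j-1}$ for $1\le j\le k+1$. Let $S=\bigsqcup_{j=1}^{k+1}(B_j\setminus\{\max B_j\})$, so $|S|=n-k-1$. From the analysis in the proof of Proposition \ref{prop:BlockIntervals}, any non-characteristic member of a flag in the block, sitting between $\mathbf{V}_{j-1}$ and $\mathbf{V}_j$, has the form $\mathbf{V}_{j-1}\cup\{a\in B_j:a>v\}$ for a unique $v\in B_j\setminus\{\max B_j\}$. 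Sending each non-characteristic member to its cut $v\in S$ extends to a bijection between the vertices of the block (which are $i$-flags with exactly $i+1-k$ non-characteristic members) and the $(i-k+1)$-subsets of $S$, i.e. the $(i-k)$-simplices of the $(n-2-k)$-dimensional complete complex on the vertex set $S$.

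\textbf{Adjacency and magnitudes.} Under this bijection, two block vertices are adjacent in $Y^i$ precisely when they share a parent $(i+1)$-flag $V$; since $D_i$ preserves the characteristic flag (Proposition \ref{prop:BlockIntervals}), $V$ itself has characteristic sub-flag $\mathbf{V}$, and the bijection identifies $V$ with an $(i-k+1)$-subset $T\subset S$. The two neighbors are obtained by removing one of two distinguished elements of $T$, which is exactly the adjacency of the up-down walk at level $i-k$. By Proposition \ref{prop:DegenerateEntries} the off-diagonal magnitudes of $D_i$ on the block are all $1$, while a direct count (or Proposition \ref{prop:AbsoluteEntries}) shows that the diagonal entries on both sides coincide; thus the magnitudes agree.

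\textbf{Signs and conclusion.} The main technical step is aligning the signs. The sign $(-1)^{k+l}$ of $D_i(\hat V_k,\hat V_l)$ is determined by the positions $k,l$ of the removed members in the full $(i+1)$-flag, whereas the target walk uses only ranks within $T$. For $v\in B_j\setminus\{\max B_j\}$ define $g(v)=j-1$, the number of characteristic members strictly below the corresponding non-characteristic member. Because characteristic and non-characteristic members interleave predictably in the inclusion order, the position of the non-characteristic from $v$ inside the $(i+1)$-flag exceeds its rank among non-characteristic members by exactly $g(v)$. Hence on the edge $T\setminus\{v\}\leftrightarrow T\setminus\{v'\}$, the block sign differs from the standard signed up-down walk sign by the factor $(-1)^{g(v)+g(v')}$. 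Setting $\epsilon_T:=(-1)^{\sum_{u\in T}g(u)}\in\{\pm1\}$, a short check gives $\epsilon_{T\setminus\{v\}}\epsilon_{T\setminus\{v'\}}=(-1)^{g(v)+g(v')}$, so conjugating the restricted $D_i$ by $\mathrm{diag}(\epsilon_T)$ absorbs the discrepancy and produces exactly the signed up-down walk matrix of level $i-k$ on the complete complex. This similarity is spectrum-preserving, which establishes the claimed equivalence. The main obstacle I anticipate is precisely this sign bookkeeping -- particularly when the two removed members lie in different gaps, so that intervening characteristic members contribute parity shifts that must be cancelled exactly by the rescaling $\epsilon$.
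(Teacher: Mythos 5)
Your proof is correct and reaches the conclusion by a genuinely more explicit route than the paper. The paper first identifies the block with the \emph{minor} of the level-$i$ signed walk on the full $(n-2)$-dimensional complete complex consisting of simplices containing a fixed set of $k$ ``mandatory'' vertices (the characteristic members), then invokes the $\mathrm{Aut}$-invariance of $\Delta_i^+$ from Section \ref{sec:WalkInvariance} to permute those mandatory vertices to the top of the vertex order, where the minor visibly becomes the level-$(i-k)$ walk on the $(n-2-k)$-dimensional complete complex; the sign discrepancy is absorbed silently into the signed permutation action on chains. You instead map onto the small complex directly and compute the discrepancy by hand: your $g(v)=j-1$ is exactly the shift between position in the full flag and rank among non-characteristic members, and the gauge $\epsilon_T=(-1)^{\sum_{u\in T}g(u)}$ indeed satisfies $\epsilon_{T\setminus\{v\}}\epsilon_{T\setminus\{v'\}}=(-1)^{g(v)+g(v')}$, so the diagonal conjugation removes it. Your version is more self-contained (it does not need the automorphism machinery of Section \ref{sec:WalkInvariance}) at the cost of the bookkeeping you flag; the diagonal entries do match, both being $\dim - \mathrm{level} = (n-2)-i = (n-2-k)-(i-k)$.

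Two small points you should make explicit. First, your sign comparison needs the enumeration of $S$ to be chosen so that the rank of $v$ in $T$ equals the rank of the corresponding member among the non-characteristic members of $V$ in the inclusion order: order $S$ by gap index $j$ first and, within each $B_j\setminus\{\max B_j\}$, by \emph{decreasing} cut $v$ (a larger cut gives a smaller top-interval, hence an earlier member). Since the spectrum is independent of the enumeration this costs nothing, but without fixing it the claimed rank identity is not literally true. Second, surjectivity of your bijection requires that replacing the non-characteristic members of a flag by arbitrary nonempty proper top-intervals of the gaps never changes which members are characteristic; this holds because $\min(\mathbf{V}_j\setminus V_{\mathrm{pred}})=\min B_j$ and $\max(V_{\mathrm{succ}}\setminus\mathbf{V}_j)=\max B_{j+1}$ for any admissible choice of neighbors, so the domination condition at $\mathbf{V}_j$ depends only on $\mathbf{V}$ itself. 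Both are one-line checks, but they are where the argument would otherwise have gaps.
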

    \begin{proof}
        For every flag $V$ with characteristic flag $\mathbf{V}$ every $V_j\in V$ with $\mathbf{V}_l\subset V\subset \mathbf{V}_{l+1}$ can be described as $\mathbf{V}_l\cup(\mathbf{V}_{l+1}\cap [n]\backslash[m_j])$ for some $1\leq m_j\leq n$. Therefore, $D_i\restriction_{\mathbf{V}}$ is equivalent to the signed-up down walk on the $n-2$-dimensional complete complex while enforcing that the flags contain $|\mathbf{V}_1|<\dots<|\mathbf{V}_k|$. In other words, $D_i\restriction_{\mathbf{V}}$ is as a matrix equivalent to the minor of the up-down walk on the $n-2$ dimensional complete complex corresponding to sub-simplices containing $|\mathbf{V}_1|<\dots<|\mathbf{V}_k|$.\\
        As shown in Section \ref{sec:WalkInvariance} the up-down walk is invariant under automorphisms of the underlying complex. For the $n-2$-dimensional complete complex the group of automorphisms is the symmetric group $S_{n-1}$. Let $g\in S_{n-1}$ be a permutation which maps $\{|\mathbf{V}_1|,\dots,|\mathbf{V}_k|\}$ to $\{n-k,\dots,n-1\}$. After applying $g$ the minor corresponding to simplices containing $\{|\mathbf{V}_1|,\dots,|\mathbf{V}_k|\}$ becomes the minor corresponding to simplices  containing $\{n-k,\dots,n-1\}$. But this minor is equivalent to the signed up down walk of level $i-k$ on the $n-k-2$ dimensional complete complex by mapping the vertex with label $j<n-k$ in the simplex of dimension $n-2$ to the vertex of label $j$ in the simplex of dimension $n-k-2$. In conclusion this shows that $D_i\restriction_{\mathbf{V}}$ is equivalent to the signed up down walk of level $i-k$ on the simplex of dimension $n-2-k$.
    \end{proof}
    \subsection{The Spectrum of a Simplex}
    \label{sec:SimplexSpecs}
    In the Section \ref{sec:TheBlock} we have shown that $D_i$ can be block decomposed such that each block corresponds to a characteristic flag $\mathbf{V}$. Further, the restriction of $D_i\restriction_{\mathbf{V}}$ is equivalent to the signed up-down walk of level $i-|\mathbf{V}|$ on the $n-2-|\mathbf{V}|$-dimensional complete complex. \\
    In the Section \ref{sec:WalkInvariance} we showed that $\Delta_i^+$ is invariant under automorphisms of the underlying simplicial complex. For the $n$-dimensional complete complex $Z_n$ the group of symmetries is $S_{n+1}$ i.e. the symmetric group on $n+1$ elements. Clearly $S_{n+1}$ acts transitively on the set of its $i$-simplices $Z_n(i)$. Let $s=\{0,\dots, i\}$ and $H_s$ its orientation preserving stabilizer i.e. for $h\in H_s$ the sign of $h$ restricted to $s$ is $sign(h\restriction_s)=+1$. Note that $H_s$ is isomorphic to $A_{i+1}\times S_{n-i}$ where the alternating group on $i+1$-elements $A_{i+1}$ acts on the first $i+1$ elements and $S_{n-i}$ acts on the tail. For $A_{i+1}$ one has
    \begin{lemma}
        $A_{i+1}$ acts  transitively on the subsets of fixed cardinality $\{0,\dots, i\}$ for all $i\neq 1$ i.e. for any two subsets $t,u\subset \{0,\dots, i\}$ with $|t|=|u|$ there is $h\in A_{i+1}$ with $ht=u$.
    \end{lemma}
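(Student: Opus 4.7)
The plan is to reduce to the standard transitivity of $S_{i+1}$ on $k$-subsets and then, when the ambient permutation we find happens to be odd, absorb its sign by composing with a transposition that fixes $u$ setwise.

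Concretely, given $t,u\subset\{0,\dots,i\}$ with $|t|=|u|=k$, I would first appeal to the fact that $S_{i+1}$ acts transitively on $k$-subsets to produce a permutation $\sigma\in S_{i+1}$ with $\sigma t=u$. If $\sigma$ is even we are done. Otherwise I need a transposition $\tau\in S_{i+1}$ with $\tau u=u$ (as a set), because then $\sigma\tau$ is even and still sends $t$ to $u$. Such a $\tau$ exists as soon as there are two indices which both lie in $u$ or both lie in its complement $u^c=\{0,\dots,i\}\setminus u$; swapping these two indices fixes $u$ setwise.

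The key counting observation is that $|u|+|u^c|=i+1$, so for $i\geq 2$ one has $\max(|u|,|u^c|)\geq 2$ and a suitable transposition always exists. The cases $k=0$ and $k=i+1$ are trivial (the only subset of that cardinality is unique), and $i=0$ is vacuous since $\{0\}$ has a unique subset of each cardinality. The excluded case $i=1$ is genuinely exceptional because $A_2$ is trivial while there are two singletons $\{0\},\{1\}$ in $\{0,1\}$; the above argument fails exactly because for $i=1$ and $|u|=1$ one has $|u|=|u^c|=1$, leaving no room for a nontrivial transposition fixing $u$.

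The main (and only) obstacle is this edge-case bookkeeping: checking that for every admissible $i\geq 2$ and every $k\in\{0,\dots,i+1\}$, at least one of $u$ or $u^c$ has cardinality $\geq 2$. No combinatorial or group-theoretic machinery beyond the sign homomorphism $\mathrm{sgn}:S_{i+1}\to\{\pm1\}$ and transitivity of $S_{i+1}$ on $k$-subsets is required.
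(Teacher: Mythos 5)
Your argument is correct and complete; the paper actually states this lemma without any proof, so there is nothing to compare it against, and your sign-correction argument (reduce to transitivity of $S_{i+1}$ on $k$-subsets, then absorb an odd sign by a transposition supported inside $u$ or inside $u^c$, which exists since $|u|+|u^c|=i+1\geq 3$ forces $\max(|u|,|u^c|)\geq 2$) is exactly the standard one and correctly isolates $i=1$ as the genuine exception. One pedantic remark: with the usual left-action convention $(\sigma\tau)(x)=\sigma(\tau(x))$, the correcting transposition $\tau$ should fix $t$ setwise (or you should form $\tau\sigma$ with $\tau$ fixing $u$); since $|t|=|u|$ the identical counting applies, so this is a one-word fix and not a gap.
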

    This leads to the following lemma:
    \begin{lemma}
        Assume $i\neq 1$. If $s\in Z_n(i)$ then for each $0\leq k\leq i+1$ the simplices with $t\in Z_n(i)$ such that $|t\cap s|=k$ from an $H_s$ orbit. 
    \end{lemma}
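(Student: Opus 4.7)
The plan is to directly combine the transitivity of the two factors of $H_s \cong A_{i+1}\times S_{n-i}$ using the canonical decomposition of any $t \in Z_n(i)$ into its portion lying in $s$ and its portion lying in the complement.

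Concretely, fix $t,t' \in Z_n(i)$ with $|t\cap s|=|t'\cap s|=k$. First I would write $t = (t\cap s) \sqcup (t\setminus s)$ and $t' = (t'\cap s)\sqcup(t'\setminus s)$, noting that $t\cap s, t'\cap s$ are $k$-subsets of the $(i+1)$-element set $s=\{0,\dots,i\}$, while $t\setminus s, t'\setminus s$ are $(i+1-k)$-subsets of $\{i+1,\dots,n\}$, which has $n-i$ elements. The goal is then to produce a single $h\in H_s$ sending $t$ to $t'$, and since $H_s$ decomposes as a direct product acting independently on $s$ and on its complement, it suffices to produce the two factors separately.

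For the factor in $A_{i+1}$: since we are assuming $i\neq 1$, the preceding lemma furnishes $h_1\in A_{i+1}$ with $h_1(t\cap s)=t'\cap s$. For the factor in $S_{n-i}$: the full symmetric group on $\{i+1,\dots,n\}$ trivially acts transitively on subsets of any fixed cardinality, so there exists $h_2\in S_{n-i}$ with $h_2(t\setminus s)=t'\setminus s$. Setting $h=(h_1,h_2)\in A_{i+1}\times S_{n-i}\cong H_s$ then yields $ht=t'$, which proves that all such $t$ lie in a single $H_s$-orbit. Conversely, any $h\in H_s$ preserves $s$ setwise and therefore preserves the intersection size $|t\cap s|$, so different values of $k$ index distinct orbits.

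The only subtlety — and the reason $i=1$ must be excluded — is the alternating-group factor: $A_2=\{e\}$ cannot exchange the two singletons inside $s=\{0,1\}$, so the $k=1$ case would fail there. The edge cases $k=i+1$ (giving $t=s$, a single simplex) and $k=0$ (requiring $n-i\geq i+1$ for any such $t$ to exist at all) are either trivial or vacuous, so no additional work is required. There is no serious obstacle beyond invoking the previous lemma and the obvious transitivity of $S_{n-i}$.
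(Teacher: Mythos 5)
Your proof is correct and follows essentially the same route as the paper: decompose $t$ into $t\cap s$ and $t\setminus s$, invoke the preceding lemma for the $A_{i+1}$-factor, use the obvious transitivity of $S_{n-i}$ on subsets of the complement, and observe that $H_s$ preserves $|t\cap s|$. Your remarks on why $i=1$ is excluded and on the edge cases are a welcome addition but do not change the argument.
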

    \begin{proof}
        Since $H_s=A_{i+1}\times S_{n-i}$ one has for $h\in H_s$ that $|t\cap s|=|ht\cap s|$. By the previous lemma on transitivity for two simplices $t,t'$ with $|t\cap s|=|t'\cap s|$ there is  a $h\in A_{i+1}$ such that $ht\cap s=t'\cap s$. Further, $S_{n-i}$ acts set transitively by cardinality as well so there is $h'\in S_{n-i}$ such that $h't\cap\{i+1\dots n\}=t'\cap\{i+1\dots n\}$.
    \end{proof}
    As $S_{n+1}$ acts transitively on $Z_n(i)$ in order to determine the spectrum of the $i$-th signed up-down walk on $Z_n$ one only needs to consider its spectrum restricted to $H_s$-invariant functions:
    \begin{proposition}
        Let $f\neq 0$ be a an eigenfunction of the signed up-down walk on $Z_n$ corresponding to eigenvalue $\lambda$, then there is an $H_s$-invariant eigenfunction $f_s$ which corresponds to $\lambda$ as well.
    \end{proposition}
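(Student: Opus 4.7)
The plan is the standard averaging argument, tailored to the action defined in Section \ref{sec:WalkInvariance}. Since the up-down walk $\Delta_i^+$ commutes with the action of $Aut(Z_n) = S_{n+1}$, and $H_s \leq S_{n+1}$, averaging any eigenfunction over $H_s$ produces an $H_s$-invariant function that remains an eigenfunction with the same eigenvalue, provided the average is nonzero.

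Concretely, I would proceed in three steps. First, use transitivity of $S_{n+1}$ on $Z_n(i)$ to reduce to the case $f(s) \neq 0$: pick any simplex $u \in Z_n(i)$ with $f(u) \neq 0$, choose $g \in S_{n+1}$ with $gu = s$, and replace $f$ by $gf$. By the invariance proved in Section \ref{sec:WalkInvariance}, $gf$ is still a $\lambda$-eigenfunction, and by the action formula $(gf)(s) = sign(g\restriction_u)\, f(u) \neq 0$. Second, define
\begin{equation}
    f_s = \frac{1}{|H_s|}\sum_{h \in H_s} hf.
\end{equation}
This function is $H_s$-invariant by the usual reindexing: for $h' \in H_s$, $h' f_s = \frac{1}{|H_s|}\sum_{h \in H_s} (h'h) f = f_s$. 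Moreover, each term $hf$ is a $\lambda$-eigenfunction of $\Delta_i^+$ because $\Delta_i^+$ commutes with the action of $h \in Aut(Z_n)$, so $f_s$ is a $\lambda$-eigenfunction as well.

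Third, verify that $f_s \neq 0$ by evaluating at $s$. For every $h \in H_s$, $h$ stabilizes $s$ as a set, so $(hf)(s) = sign(h\restriction_s)\, f(h^{-1}s) = sign(h\restriction_s)\, f(s)$. Because $H_s$ is the \emph{orientation-preserving} stabilizer, $sign(h\restriction_s) = +1$ for all $h \in H_s$, and hence $(hf)(s) = f(s)$. Averaging gives $f_s(s) = f(s) \neq 0$, so $f_s$ is the desired nonzero $H_s$-invariant eigenfunction.

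The only subtle point—what I would flag as the main thing to get right—is the sign bookkeeping in the last step: the conclusion relies crucially on $H_s$ being defined as the orientation-preserving stabilizer rather than the full stabilizer, since otherwise the signs $sign(h\restriction_s) = \pm 1$ in the average at $s$ could cancel and kill $f_s(s)$. Once this observation is in place, everything else reduces to the equivariance results already established.
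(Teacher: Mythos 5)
Your proof is correct and follows essentially the same route as the paper: move a nonzero value to $s$ by transitivity, average over $H_s$, and check nonvanishing at $s$. Your explicit observation that the orientation-preserving condition on $H_s$ is what prevents sign cancellation in $f_s(s)$ is exactly the point the paper leaves implicit in the phrase ``by construction.''
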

    \begin{proof}
        Let $s'$ be a simplex such that $f(s')\neq 0$ and let $g\in S_{n+1}$ be such that $gs'=s$. Then $gf(s)=f(s')\neq 0$. Set:
        \begin{equation}
            f_s=\frac{1}{|H_s|}\sum_{h\in H_s}hgf
        \end{equation}
        Then $f_s(s)=f(s')\neq 0$, is an eigenfunction of the signed up-down walk corresponding to $\lambda$ and $H_s$-invariant by construction.
    \end{proof}
    Luckily, the space of $H_s$ invariant functions has exactly two dimensions in most cases:
    \begin{proposition}
        An $H_s$-invariant function vanishes on any simplex which has overlap less than $i$ with $s$.
    \end{proposition}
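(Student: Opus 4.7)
The plan is to find, for each simplex $t \in Z_n(i)$ with $|t \cap s| = k < i$, an element $h \in H_s$ that fixes $t$ setwise but acts on its ordered vertex tuple by an odd permutation. $H_s$-invariance of $f$ will then immediately force $f(t) = 0$.

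First I would unpack $H_s$-invariance pointwise. Since the action on indicators is $gI_t = sign(g\restriction_t)I_{gt}$, linear extension gives $(gf)(t) = sign(g\restriction_{g^{-1}t})\, f(g^{-1}t)$ for every $i$-simplex $t$. In particular, whenever $g \in H_s$ satisfies $gt = t$, the invariance equation collapses to $f(t) = sign(g\restriction_t)\, f(t)$, so any such $g$ with odd sign on $t$ forces $f(t) = 0$.

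Next I would construct such an $h$. Under the decomposition $H_s \cong A_{i+1} \times S_{n-i}$, the factor $A_{i+1}$ permutes the vertices of $s = \{0,\ldots,i\}$ and $S_{n-i}$ permutes those of $s^c = \{i+1,\ldots,n\}$. The hypothesis $k < i$ forces $|t \setminus s| = i+1-k \geq 2$, so $t$ contains at least two vertices $a, b$ lying in $s^c$. Letting $\beta \in S_{n-i}$ be the transposition of $a$ and $b$, the element $h$ consisting of the identity on $s$ together with $\beta$ on $s^c$ lies in $H_s$ and sends $t$ to itself setwise. Because every label in $s$ is smaller than every label in $s^c$, the induced permutation on the ordered tuple of $t$ acts trivially on the $t \cap s$-block and as the single transposition $\beta$ on the $t \cap s^c$-block; hence $sign(h\restriction_t) = -1$, as required.

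The one point that deserves a brief check is the factorization of $sign(h\restriction_t)$ across the $s$- and $s^c$-blocks of $t$, which is immediate from the labeling and is where the specific choice $s = \{0,\ldots,i\}$ is used. No transitivity result inside $A_{i+1}$ is invoked, so the argument is uniform in $i$ and in particular handles $i = 1$, which was excluded from the preceding transitivity lemma. There is no serious obstacle: the content lies entirely in the observation that the complement $t \setminus s$ has at least two elements precisely when the overlap drops below $i$, and these two elements are freely swappable inside $H_s$ via its $S_{n-i}$ factor.
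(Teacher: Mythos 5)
Your proof is correct and is essentially the paper's argument: both hinge on the observation that $|t\setminus s|\geq 2$ when the overlap is below $i$, and both use the transposition of two such elements, which lies in $H_s$, fixes $t$ setwise, and acts with sign $-1$ on $t$, forcing $f(t)=0$. The only cosmetic difference is that the paper phrases the conclusion via averaging the indicator $I_t$ over $H_s$, while you apply the pointwise invariance identity directly.
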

    \begin{proof}
        If the overlap of $t$ and $s$ is strictly less than $i$ there are at least two distinct elements $k, l\in t\backslash s$. As the cycle $(kl)$ fixes each element in $s$ it stabilizes $s$ i.e. $(kl)\in H_s$. Further $sign((kl)\restriction_t)=-1$ which implies:
        \begin{equation}
            \frac{1}{|H_s|}\sum_{h\in H_s}hf_t=\frac{1}{2}\left(\frac{1}{|H_s|}\sum_{h\in H_s}hf_t+\frac{1}{|H_s|}\sum_{h\in H_s}h(kl)f_t\right)=\frac{1}{2}\left(\frac{1}{|H_s|}\sum_{h\in H_s}hf_t-\frac{1}{|H_s|}\sum_{h\in H_s}hf_t\right)=0
        \end{equation}
        For an $H_s$ invariant function $f$ one has $\frac{1}{|H_s|}\sum_{h\in H_s}hf=f$ which implies by the previous equation that $f(t)=0$.
    \end{proof}
    Using this vanishing result one may easily determine the spectrum of the signed up-down walk on any simplex:
    \begin{proposition}
        The spectrum of the $i$-th signed up-down walk on the $n$-dimensional complete complex is given as $\{0,n+1\}$
        \label{prop:UpDownSimplex}
    \end{proposition}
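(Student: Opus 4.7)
The plan is to use the preceding propositions to reduce the spectrum calculation to a $2 \times 2$ matrix computation on the $H_s$-invariant subspace of $C^i(Z_n)$. By the $H_s$-averaging proposition, every eigenvalue of $\Delta_i^+$ on $Z_n$ is attained by some $H_s$-invariant eigenfunction; by the vanishing result, such a function is supported on $\{s\} \cup O_i$, where $O_i = \{t \in Z_n(i) : |t \cap s| = i\}$. For $i \neq 1$ the orbit lemma gives that $O_i$ is a single $H_s$-orbit, so the invariant subspace is two-dimensional; it is spanned by $I_s$ and a symmetrization $\phi$ of $I_{t_0}$ for any $t_0 \in O_i$. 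A short check using the action $hI_s = sign(h\restriction_s)I_{hs}$ and the structure $H_s = A_{i+1} \times S_{n-i}$ shows that, up to normalization, $\phi = \sum_{k=0}^{i}\sum_{v \notin s}(-1)^k I_{t_{k,v}}$, where $t_{k,v}$ denotes $s$ with its $k$-th vertex (in increasing order) replaced by $v \notin s$.

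Next I would compute the four entries of $\Delta_i^+$ in the basis $\{I_s, \phi\}$. The diagonal entry $\Delta_i^+(s,s) = n - i$ is immediate from the general pure-complex formula already noted in the preliminaries. A direct sign tracking in $\delta_i d_i$ gives $\Delta_i^+(s, t_{k,v}) = (-1)^{k+i+1}$, so $\Delta_i^+ I_s = (n-i) I_s + (-1)^{i+1}\phi$. Pairing this with $\phi$ yields $(\Delta_i^+\phi)(s) = \sum_{k,v}(-1)^{k+i+1}(-1)^k = (-1)^{i+1}(i+1)(n-i)$. For the remaining entry I would evaluate $(\Delta_i^+\phi)$ at the representative $t_0 = t_{0,i+1}$ and split its $O_i$-neighbors into two geometric types: those of the form $t_{w,i+1}$ (for $1 \leq w \leq i$) obtained by reinserting a vertex of $s \cap t_0$, each contributing $+1$ after cancelling signs with $\phi$; and those of the form $t_{0,z}$ (for $z > i+1$) obtained by swapping out the extra vertex $i+1$, each contributing $-1$. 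Adding the diagonal $n-i$, the total is $(n-i) + i - (n-i-1) = i+1$, so the coefficient of $\phi$ in $\Delta_i^+\phi$ is $i+1$.

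The resulting matrix is
\[
\begin{pmatrix} n - i & (-1)^{i+1}(i+1)(n-i) \\ (-1)^{i+1} & i+1 \end{pmatrix},
\]
with trace $(n-i)+(i+1) = n+1$ and determinant $(n-i)(i+1) - (i+1)(n-i) = 0$; its eigenvalues are therefore $\{0, n+1\}$. By the $H_s$-reduction this is the full spectrum of $\Delta_i^+$ on $Z_n$, proving the proposition for $i \neq 1$. The excluded case $i = 1$ requires a three-dimensional invariant subspace since $A_2$ is trivial and $O_1$ splits into two $H_s$-orbits, but an analogous matrix computation still produces spectrum $\{0, n+1\}$. The delicate step throughout is the sign bookkeeping in $(\Delta_i^+\phi)(t_0)$: one must carefully match the sign of each entry $\Delta_i^+(t_0, u)$ against the $(-1)^k$ weight of $\phi$ on each overlap-$i$ neighbor $u$ so that the two types of contributions combine cleanly to $i+1$ rather than partially cancelling.
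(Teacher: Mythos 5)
Your proposal is correct and follows essentially the same route as the paper: reduce to the $H_s$-invariant subspace via the averaging and vanishing propositions, compute the resulting $2\times 2$ matrix in the basis $\{I_s,\phi\}$, and read off trace $n+1$ and determinant $0$; your matrix agrees with the paper's (up to transposition and the normalization $(-1)^{i+1}$ absorbed into $\phi$), and the entry-by-entry sign bookkeeping matches. The only place you are thinner than the paper is the exceptional case $i=1$, where $A_2$ is trivial and the invariant subspace is three-dimensional: you assert the analogous computation works, while the paper writes out the corresponding $3\times 3$ rank-one matrix explicitly.
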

    \begin{proof}
        According to the previous proposition one needs to determine the spectrum of the $2\times 2$-matrix corresponding to the walk matrix restricted to $H_s$-invariant functions. One chooses as a basis the indicator of $f_s$ and $\sum_{\hat{s}_k\cup\{l\}}(-1)^{k+i+1}f_{\hat{s}_k\cup\{l\}}$.\\
        $s$ has $(i+1)(n-i)$ neighbors $t$ with $|s\cap t|=i$ and by our choice of signs each edge counts positively. Each such $t$ has $1$ edge connecting it to $s$. This determines the off-diagonal entries of $\Delta_i^+\restriction_{H_s}$. For the entries on the diagonal, one needs to count the $n-i$ loops attached to each vertex positively.
        One can transition from $\hat{s}_k\cup\{l\}$ to $\hat{s}_k\cup\{l'\}$ with $l'$, there are $n-i-1$ such neighbors and each edge counts as $(-1)^{2i+1}=-1$. And lastly, one may transition from $\hat{s}_k\cup\{l\}$ to $\hat{s}_{k'}\cup\{l\}$ which contributes a $(-1)^{k+k'}=(-1)^{k+i+1}(-1)^{k'+i+1}$ i.e. a positive sign. In total one obtains:
        \begin{equation}
            \begin{pmatrix}
                n-i+i-(n-i-1)&&1\\
                (i+1)(n-i)&&n-i              
            \end{pmatrix}=\begin{pmatrix}
                i+1&&1\\
                (i+1)(n-i)&&n-i              
            \end{pmatrix}
        \end{equation}
        For the case $(n,i)=(n,2)$ consider the following: There are three relevant $H_s$ orbits $s$, $\{(0,j)|2\leq j\leq n\}$ and $\{(1,j)|2\leq j\leq n\}$. By the same considerations as in the case $i\neq 2$ the signed up-down operator restricted to $H_s$-invariant functions reads:
        \begin{equation}
            \begin{pmatrix}
                1&&1&&1\\
                1&&1&&1\\
                n-1&&n-1&&n-1
            \end{pmatrix}
        \end{equation}
        where the $e_1$ and $e_2$ is the indicator of $\{(0,j)|2\leq j\leq n\}$ and $\{(1,j)|2\leq j\leq n\}$ respectively while $e_3$ is the indicator of $s$. The eigenvalues of this matrix are $\{0,n+1\}$ as required. 
    \end{proof}
    \subsection{The Asymptotic Eigenvalues}
    In  order to obtain a full proof of Papikian's conjecture one needs to combine the block decomposition derived in Section \ref{sec:TheBlock} with the spectrum of the signed up-down walk on the $n$-dimensional complete complex determined in Section \ref{sec:SimplexSpecs}: 
    \begin{corollary}
        The spectrum of the signed up-down walk $\Delta_i^+$ on $X_{n,q}$ converges to $\{0,n-i-2,\dots,n-1\}$ for $q\rightarrow\infty $. 
        \label{thm:Eigenstripping}
    \end{corollary}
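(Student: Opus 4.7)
The plan is to chain together the structural results already established. By Proposition \ref{prop:SpectralEquivalence} the spectrum of $\Delta_i^+$ on $X_{n,q}$ coincides as a set with the spectrum of the quotient operator $\tilde{\Delta}_i^+$ on $X^i/H$, and by Proposition \ref{prop:SymMatrix} the latter agrees with the spectrum of its symmetric conjugate $\mu^{-1}\tilde{\Delta}_i^+\mu$. The entry-wise analysis in Section \ref{sec:TheLimit} (culminating in Proposition \ref{prop:DegenerateEntries}) shows that each matrix entry of $\mu^{-1}\tilde{\Delta}_i^+\mu$ converges as $q\to\infty$ to a well-defined limit, namely the entry of $D_i$. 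Since the dimension of the quotient is the fixed number of $i$-flags of subsets of $[n]$ and does not depend on $q$, Lemma \ref{lem:SpectralConvergence} applies and reduces the problem to computing $\mathrm{spec}(D_i)$.

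To compute $\mathrm{spec}(D_i)$ I would use the block decomposition of Section \ref{sec:TheBlock}. Proposition \ref{prop:BlockIntervals} shows that $D_i$ splits as a direct sum over characteristic sub-flags $\mathbf{V}$, and Proposition \ref{prop:BlockSimplex} identifies each block $D_i\restriction_{\mathbf{V}}$, when $|\mathbf{V}|=k$, with the signed up-down walk at level $i-k$ on the $(n-2-k)$-dimensional complete complex. Applying Proposition \ref{prop:UpDownSimplex} to each such block gives that its spectrum equals $\{0,\,n-1-k\}$. Taking the union over all characteristic flag sizes $k\in\{0,1,\dots,i\}$ then yields the set
\begin{equation}
\{0\}\cup\{n-1-k : 0\le k\le i\}=\{0,\,n-1-i,\,n-i,\dots,\,n-1\},
\end{equation}
which matches the desired limit.

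The one point that needs a small separate check — and which I expect to be the only mildly delicate step — is that every $k\in\{0,1,\dots,i\}$ is actually realized by some characteristic flag, so that each eigenvalue $n-1-k$ really appears in the limiting spectrum rather than being an artifact of a vacuous block. For $k=0$ this is Proposition \ref{prop:EmptyCharacter}, and for $1\le k\le i$ it suffices to exhibit one flag $V_0\subset\dots\subset V_i\subset[n]$ with exactly $k$ positions violating the domination condition; explicit examples are easy to write down by choosing the relevant $V_j$ to interleave their minima and maxima with the adjacent layers. Similarly one should note that the eigenvalue $0$ is present (it arises from every single block of Proposition \ref{prop:UpDownSimplex}, and also trivially because $\Delta_i^+=\delta_i d_i$ is always singular once the $i$-th cohomology is nontrivial). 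Combining the block spectra with Lemma \ref{lem:SpectralConvergence} and Proposition \ref{prop:SpectralEquivalence} then concludes the proof.
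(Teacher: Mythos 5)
Your overall route is exactly the paper's: reduce to $\mathrm{spec}(D_i)$ via Propositions \ref{prop:SpectralEquivalence}, \ref{prop:SymMatrix} and Lemma \ref{lem:SpectralConvergence}, decompose $D_i$ into blocks indexed by characteristic flags (Propositions \ref{prop:BlockIntervals}, \ref{prop:BlockSimplex}), apply Proposition \ref{prop:UpDownSimplex} to each block, and finish by checking which block sizes actually occur. However, there is a concrete error in the last step: you let the characteristic flag size $k$ range only over $\{0,1,\dots,i\}$, so your final set is $\{0,\,n-1-i,\dots,n-1\}$, which does \emph{not} match the stated limit $\{0,\,n-i-2,\dots,n-1\}$ --- it is missing the eigenvalue $n-i-2$. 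The characteristic sub-flag of an $i$-flag $V_0\subset\dots\subset V_i$ is a subset of all $i+1$ of its members, so its length can be as large as $i+1$, and that maximal case is realized: for instance $V_k=\{1,\dots,k+1\}$ for $0\le k\le i$ (with $n>i+1$) violates the domination condition at every level, since $V_k\setminus V_{k-1}=\{k+1\}$ fails to dominate $V_{k+1}\setminus V_k=\{k+2\}$ (and $V_i=\{1,\dots,i+1\}$ fails to dominate $[n]\setminus V_i$). That block contributes the eigenvalue $n-1-(i+1)=n-i-2$, which is precisely where the bottom of the claimed spectrum comes from; the paper's proof takes the union over $j=0,\dots,i+1$ for this reason.

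A second, smaller gap: you defer the realization of the intermediate lengths $1\le k\le i$ to ``explicit examples are easy to write down'' without producing one; since the whole content of this final step is exhibiting such flags, you should actually write one down (the family above, truncated or combined with a tail of intervals ending at $n$ as in the paper, does the job for every $k$ between $0$ and $i+1$). Once the range of $k$ is corrected and the examples supplied, your argument coincides with the paper's proof.
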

    \begin{proof}
        Lemma \ref{lem:SpectralConvergence} together with Proposition \ref{prop:SymMatrix} show $spec(\Delta_i^+)=spec(\tilde{\Delta}_i^+)\rightarrow spec(D_i)$. Proposition \ref{prop:BlockIntervals} show that each block of $D_i$ corresponds to a characteristic flag $\mathbf{V}$ while Proposition \ref{prop:BlockSimplex} establishes the equivalence of $D_i\restriction_{\mathbf{V}}$ with the signed up-down walk of level $i-|\mathbf{V}|$ on the $n-2-|\mathbf{V}|$ dimensional complete complex. Proposition \ref{prop:UpDownSimplex} shows that this spectrum is exactly $\{0, n-1-|\mathbf{V}|\}$. So what is left to do is determining what lengths a characteristic flag can have. Proposition \ref{prop:EmptyCharacter} gives an example of $|\mathbf{V}|=0$ and naturally $|\mathbf{V}|\leq i+1$. For $0\leq j\leq i$ setting $V=[0]\subset[1]\subset\dots\subset[j]\subset [j]\cup[n]\backslash[n-1]\subset [j]\cup[n]\backslash[n-2]\subset\dots $ the characteristic flag is $\mathbf{V}=([0]\subset\dots\subset[j])$ i.e. $V$ is an $i$ flag with characteristic flag of length $j+1$. In conclusion this shows that
        \begin{equation}
            spec(D_i)=\bigcup_{\mathbf{V}}spec(D_i\restriction_{\mathbf{V}})=\bigcup_{j=0}^{i+1}\{0,n-1-j\}
        \end{equation}
    \end{proof}

    \newpage
    \section{Appendix}
    \subsection{Counting Subspaces}
    \label{sec:CountingSubspaces}
    The complexes we are interested in are defined based on $\mathbb{F}_q^n$ namely through the notion of a \textbf{flag} of that space. A  flag is a collection of distinct non-trivial subspaces $V_i\subset\mathbb{F}_q^n$ i.e. $V_i\not \in \{\emptyset,\mathbb{F}_q^n\}$ and $V_i\subset V_{i+1}$. The flag is called \textbf{full} if this collection has size $n-1$ and otherwise it is called \textbf{partial}.\\
	Throughout this note we often need to count neighbors and to this end \textbf{Gaussian binomial coefficients} prove very useful. These coefficients express for integers $k\leq n$ and prime-power $q$, denoted as $\gBinom{n}{k}_q$, how many $k$-dimension subspaces there are in $\mathbb{F}_q^n$. 
	\begin{lemma}
		The Gaussian binomial coefficients can be expressed as follows:
		\begin{equation}
			\gBinom{n}{k}_q=\prod_{i=1}^{k}\frac{q^{n-i+1}-1}{q^i-1}
		\end{equation}
		\label{lem:GaussBinom}
	\end{lemma}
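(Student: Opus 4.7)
The plan is to prove the closed form for $\gBinom{n}{k}_q$ by a standard double-counting argument based on ordered bases. First I would count the number of linearly independent ordered $k$-tuples of vectors in $\mathbb{F}_q^n$: the first vector can be any nonzero vector, giving $q^n - 1$ choices; once the first $i-1$ vectors are chosen, their span is an $(i-1)$-dimensional subspace of size $q^{i-1}$, so the $i$-th vector can be anything outside this span, giving $q^n - q^{i-1}$ choices. Multiplying over $i=1,\dots,k$ shows that the number of ordered linearly independent $k$-tuples equals $\prod_{i=1}^{k}(q^n - q^{i-1})$.

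Next I would apply the identical argument inside a fixed $k$-dimensional subspace $V \subset \mathbb{F}_q^n$. Since $V \cong \mathbb{F}_q^k$, the number of ordered bases of $V$ is $\prod_{i=1}^{k}(q^k - q^{i-1})$. Every ordered linearly independent $k$-tuple in $\mathbb{F}_q^n$ spans a unique $k$-dimensional subspace, and every such subspace arises from exactly this many ordered bases. Dividing the two counts produces
\begin{equation}
  \gBinom{n}{k}_q = \frac{\prod_{i=1}^{k}(q^n - q^{i-1})}{\prod_{i=1}^{k}(q^k - q^{i-1})}.
\end{equation}

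To finish, I would simplify by pulling out the common factor $q^{i-1}$ from each numerator and denominator term, using $q^n - q^{i-1} = q^{i-1}(q^{n-i+1}-1)$ and $q^k - q^{i-1} = q^{i-1}(q^{k-i+1}-1)$. The $q^{i-1}$ factors cancel, leaving
\begin{equation}
  \gBinom{n}{k}_q = \prod_{i=1}^{k}\frac{q^{n-i+1} - 1}{q^{k-i+1} - 1}.
\end{equation}
The substitution $j = k - i + 1$ in the denominator reindexes $\prod_{i=1}^{k}(q^{k-i+1} - 1)$ as $\prod_{j=1}^{k}(q^{j} - 1)$, yielding the claimed expression. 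There is no real obstacle in this proof; the entire argument is a standard bijective count, and the only place requiring mild attention is the final reindexing of the denominator to convert $q^{k-i+1}-1$ into $q^i - 1$.
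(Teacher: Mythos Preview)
Your proof is correct and complete. It is, however, a genuinely different argument from the one in the paper. The paper proceeds by induction on $k$: it establishes $\gBinom{n}{1}_q=(q^n-1)/(q-1)$ directly, then builds a $k$-dimensional subspace from a $(k-1)$-dimensional one together with a line in the quotient, and corrects for the overcount by dividing by $\gBinom{k}{k-1}_q$, which it identifies with $\gBinom{k}{1}_q$ via the duality $V\mapsto V^\perp$. Your approach instead counts ordered linearly independent $k$-tuples in $\mathbb{F}_q^n$ and divides by the number of ordered bases of a fixed $k$-space, yielding the closed form in one step with no induction and no appeal to duality. Your route is more self-contained and arguably cleaner; the paper's inductive argument, on the other hand, is more in keeping with the flag-building perspective that pervades the rest of the text, and its recursion $\gBinom{n}{k}_q=\gBinom{n}{k-1}_q\gBinom{n-k+1}{1}_q/\gBinom{k}{1}_q$ foreshadows the flag-counting done in Proposition~\ref{prop:FullFlags}.
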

	\begin{proof}
		As $\gBinom{n}{1}_q$ simply counts how many lines there are in $\mathbb{F}_q^n$ and there are $q^n-1$ vectors which span a line (as only $0$ does not span a line) and $q-1$ vectors span the same line one has
		\begin{equation}
			\gBinom{n}{1}_q=\frac{q^n-1}{q-1}
		\end{equation}
		Now by induction assume that the formula above is established for $\gBinom{n}{k-1}_q$. Choosing a $k$-dimensional subspace is equivalent to choosing a $k-1$-dimensional subspace and then adding one dimension by choosing a line in the $n-k+1$-dimensional quotient.  For a fixed $k$ dimensional $V$ space there are $\gBinom{k}{k-1}_q$ pairs of $k-1$-dimensional subspaces $V'$ and lines $L\subset \mathbb{F}_q^n/V'$ which lead to $V$. Therefore this method leads to overcounting by a factor of $\gBinom{k}{k-1}_q$ and one has $\gBinom{n}{k}_q=\gBinom{n}{k-1}_q\gBinom{n-k+1}{1}_q/\gBinom{k}{k-1}_q$. Lastly, one notes that while fixing a scalar product on $\mathbb{F}_q^n$ mapping a subspace to its dual space induces a bijective map between $k$ subspaces and $n-k$ subspaces and in particular $\gBinom{k}{k-1}_q=\gBinom{k}{1}$. Altogether this proves the lemma. 
	\end{proof}
	In Section \ref{sec:EdgeWeights} one needs to count the number of full flags of the vector space $\mathbb{F}_q^m$> 
	\begin{proposition}
		The number of full flags of $\mathbb{F}_q^m$ is given as
		\begin{equation}
			\prod_{i=0}^{m}\gBinom{m-j}{1}_q
		\end{equation}
		\label{prop:FullFlags}
	\end{proposition}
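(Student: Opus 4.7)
The plan is to induct on $m$ and build the flag either from the top down or from the bottom up; I will take the bottom-up approach because it produces the product in the order matching the stated formula and keeps the dimensions of quotients small at each step. The base case $m=1$ is trivial: $\mathbb{F}_q^1$ has exactly one (empty) full flag, matching the empty product in the statement.

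For the inductive step, the key observation is that specifying a full flag $V_1 \subset V_2 \subset \cdots \subset V_{m-1}$ in $\mathbb{F}_q^m$ is equivalent to first choosing a line $V_1 \subset \mathbb{F}_q^m$, and then choosing a full flag of the quotient $\mathbb{F}_q^m / V_1$, which is an $(m-1)$-dimensional vector space. Indeed, the correspondence $V_i \mapsto V_i/V_1$ is a bijection between full flags of $\mathbb{F}_q^m$ extending $V_1$ and full flags of $\mathbb{F}_q^m/V_1$. There are $\gBinom{m}{1}_q$ choices of line by Lemma \ref{lem:GaussBinom}, and by the inductive hypothesis the quotient admits $\prod_{j=0}^{m-1}\gBinom{m-1-j}{1}_q$ full flags. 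Multiplying and reindexing yields the claimed product.

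I do not expect any genuine obstacle: the only place to be careful is the bijection between flags of $\mathbb{F}_q^m$ extending $V_1$ and flags of $\mathbb{F}_q^m/V_1$, which is immediate from the correspondence theorem for subspaces of a quotient, and the reindexing of the product so that the product range and indexing variable in the statement line up correctly with what the induction hands back. If desired, one can alternatively verify the formula by a double-counting argument: count ordered bases $(v_1,\dots,v_m)$ of $\mathbb{F}_q^m$, of which there are $\prod_{i=0}^{m-1}(q^m-q^i)$, group them by the full flag $V_i=\langle v_1,\dots,v_i\rangle$ they induce, and observe that each flag arises from exactly $\prod_{i=1}^{m}(q^i-q^{i-1})$ ordered bases; the ratio simplifies to $\prod \gBinom{m-j}{1}_q$ and provides an independent check.
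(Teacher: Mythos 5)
Your argument is correct and is essentially the paper's own: the paper also counts full flags by iteratively choosing a line in the successive quotients $\mathbb{F}_q^m/V_i$, each of dimension $m-i$, contributing a factor $\gBinom{m-i}{1}_q$; you merely package the same step as a formal induction on $m$. The ordered-bases double count you sketch at the end is a fine independent check but is not needed.
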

	\begin{proof}
		A full flag of $\mathbb{F}_q^m$ can be constructed inductively by first choosing a subspace $V_1$ of dimension $1$, then $V_2$ of dimension $2$ such that $V_1\subset V_2$ and so on. Observe that choosing $V_{i+1}$ given $V_i$ is equivalent to choosing a one dimensional subspace $[V_{i+1}]$ in the quotient $\mathbb{F}_q^m/V_i$. Since the dimension of $\mathbb{F}_q^m/V_i$ is $m-i$ one has 
        \begin{equation}
            \gBinom{m-i}{1}_q
        \end{equation}
        possibilities for choosing $V_{i+1}$. In each step the choice is independent so for choosing a full flag one has $\prod_{i=0}^{m}\gBinom{m-j}{1}_q$-many options. 
	\end{proof}
    \newpage
	\bibliography{Bibliography}
	\bibliographystyle{alpha}
\end{document}